\keywords{} 
\subjclass[2020]{14J10, 14J26, 14N30}
\theoremstyle{plain}
\newtheorem{thm}{Theorem}[section]
\newtheorem{prop}[thm]{Proposition}
\newtheorem{cor}[thm]{Corollary}
\newtheorem{lem}[thm]{Lemma}
\theoremstyle{definition}
\newtheorem{rem}[thm]{Remark}
\newtheorem{expl}[thm]{Example}
\newtheorem{setup}[thm]{Setup}
\theoremstyle{remark}
\newtheorem{claim}{Claim}[thm]
\newcommand{\bP}{{\bf P}}
\newcommand{\bF}{{\bf F}}
\newcommand{\fd}{\mathfrak{d}}
\newcommand{\fe}{\mathfrak{e}}
\newcommand{\sA}{\mathcal{A}}
\newcommand{\sE}{\mathcal{E}}
\newcommand{\sG}{\mathcal{G}}
\newcommand{\sH}{\mathcal{H}}
\newcommand{\sL}{\mathcal{L}}
\newcommand{\sM}{\mathcal{M}}
\newcommand{\sO}{\mathcal{O}}
\newcommand{\sQ}{\mathcal{Q}}
\newcommand{\alb}{\mathrm{alb}}
\newcommand{\Sym}{\mathrm{Sym}}
\def\geq{\geqslant}
\def\leq{\leqslant}
\renewcommand{\P}{\mathbf{P}}
\newcommand{\N}{\mathbf{N}}
\renewcommand{\O}{\mathcal{O}}
\let\subset\subseteq
\let\epsilon\varepsilon
\newcommand{\restr}[2]{\left. #1 \right| _{#2}}
\def\@restrpar[#1]#2{\left. (#1) \right| _{#2}}
\def\@restrst#1#2{\left. #1 \right| _{#2}}
\def\restr{\@ifnextchar[{\@restrpar}{\@restrst}}
\numberwithin{equation}{section}
\newcommand{\beba}  {\begin{equation}\begin{array}{rcl}}
\newcommand{\eaee}  {\end{array}\end{equation}}
\def\l@section{\@tocline{1}{0pt}{1pc}{}{}}
\def\l@subsection{\@tocline{2}{0pt}{1pc}{4.6em}{}}
\def\l@subsection{\@tocline{3}{0pt}{1pc}{7.6em}{}}
\renewcommand{\tocsection}[3]{%
  \indentlabel{\@ifnotempty{#2}{\makebox[2.3em][l]{%
    \ignorespaces#1 #2.\hfill}}}#3}
\renewcommand{\tocsubsection}[3]{%
  \indentlabel{\@ifnotempty{#2}{\hspace*{2.3em}\makebox[2.3em][l]{%
    \ignorespaces#1 #2.\hfill}}}#3}
\renewcommand{\tocsubsection}[3]{%
  \indentlabel{\@ifnotempty{#2}{\hspace*{4.6em}\makebox[3em][l]{%
    \ignorespaces#1 #2.\hfill}}}#3}
\title{On surfaces of
  high degree with respect to the sectional genus}
\author{Ciro Ciliberto, Thomas Dedieu, Margarida Mendes Lopes}
\address{
Ciro Ciliberto\\Dipartimento di Matematica \\
Universit\`a di Roma ``Tor Vergata''\\
Via della Ricerca  Scientifica, 00177 Roma\\ Italia
\texttt{cilibert@axp.mat.uniroma2.it   }}
\address{Thomas Dedieu\\
Institut de Math{\'e}matiques de Toulouse; UMR5219\\
Universit{\'e} de Toulouse; CNRS\\
UPS IMT, F-31062 Toulouse Cedex 9\\ France
\texttt{thomas.dedieu@math.univ-toulouse.fr}}
\address{Margarida Mendes Lopes\\
Centro de An\'alise Matem\'atica, Geometria e Sistemas Din\^amicos \\
Departamento de Matem\'atica\\
Instituto Superior T\'ecnico \\
Universidade de Lisboa \\
Av. Rovisco Pais 1, 1049--001 Lisboa\\ Portugal
\texttt{mmendeslopes@tecnico.ulisboa.pt}}
\begin{document}

\begin{abstract}
We study and classify linearly normal surfaces in $\P^n$, of degree
$d$ and sectional genus $g$, such that $d\geq 2g-1$.
\end{abstract}

\maketitle

\tableofcontents{}

\section{Introduction}

Let $S$ be a complex, projective, linearly normal, irreducible and
non-degenerate surface in $\P^n$, arbitrarily singular,
of degree $d$ and sectional (geometric) genus $g$. This paper is
devoted to the study and classification of such surfaces under the
hypothesis that $d\geq 2g-1$, which implies that the Kodaira dimension
of $S$ is $-\infty$.

This problem is very natural, and interesting for its own sake.
Yet our interest in it grew out of the study of extensions of
curves: let $C$ be a smooth projective curve in $\P^{n-1}$; we want to
undertand all the surfaces in $\P^n$ (if any) that have $C$ as a
hyperplane section. In this perspective it is important to consider
surfaces that may be singular, which was forbidden by assumption in
most of the previous results on the classification problem considered
in this 
article. Moreover, the assumption that $d\geq 2g-1$ is fairly natural
in this context. Besides note that starting from degree $2g-2$, new
kind of objects show up, e.g., canonical curves and $K3$ surfaces.
Finally, it turns out to be convenient in this context to assume that
$C$ is linearly normal (however we don't make this assumption in
general in the present article). 

One of the main ideas
in this article
is very classical, and consists in using the adjoint and pluriadjoint
systems $|mK_{S'}+H|$ of the pull back $|H|$ of the  hyperplane system
on a minimal desingularization $S'$ of $S$.
To start with, we recall
(and elaborate on) the classical fact that $|K_{S'}+H|$ is empty if
and only if $S$ is either a scroll or the Veronese surface of degree 4
in $\P^5$, see Section~\ref{S:vanish-adj1}.
Then we assume for the rest of the article that
$|K_{S'}+H|$ is non--empty.

Under this assumption we prove that
$K_{S'}+H$ is nef, hence $(K_{S'}+H)^2\geq 0$; then we have separate
discussions depending on whether the latter inequality is strict or
not.
On the one hand, we classify the
cases in which $(K_{S'}+H)^2=0$:  
then, either $g=1$ and $S$ is a weak Del Pezzo surface, or
$g\geq 2$ and $S$ is ruled by conics (see Section \ref {s:4.2}).
On the other hand, one of
the main results of the article is a sufficient condition (always
verified if the surface is regular) for  $|K_{S'}+H|$ to be base point
free
if $(K_{S'}+H)^2>0$ (see Proposition \ref {connected}). 
In Section \ref  {s:examples} we give examples showing the sharpness
of this result. We further prove that if
$(K_{S'}+H)^2> 0$ and $d\geq 10$, then $|K_{S'}+H|$ determines a birational map (see Proposition \ref {lbirational}). 

In Section \ref {S:irreg}  we focus on the case when the surface is
irregular (i.e., $q>0$).  The main result
in this case is that
if $(K_{S'}+H)^2>0$, then $g-q\geq 3$, which we show
implies that $|K_{S'}+H|$ is not composed with a pencil (see Theorem
\ref {thm:npp}).  In Section \ref {S:supraCSegre} we go on studying
the irrational case and prove an extension of a classical result by
C. Segre: the latter says that if a scroll has linearly normal hyperplane  
sections of positive genus, then it is a cone 
(see \cite[Thm. 2.3]{CD}), while our result says that
if $S$ is ruled by conics and has linearly normal hyperplane  
sections, then it is essentially the 2--Veronese re-embedding of a
cone if  $d>2g+5$ (see Theorem \ref {thm:segre}). Moreover we show that if
$d>3g-3$,
then $S$ is always ruled by lines or conics, hence it is either a cone or  a 2--Veronese thereof if it has linearly normal hyperplane  
sections (see Lemma \ref {prop:irr} and Corollary \ref {cor:irr}). 
 
Finally we focus on rational surfaces.
In this case
we give a complete classification of surfaces for which the biadjoint
system $|2K_{S'}+H|$ (see Theorem \ref {thm:class1}), or the triadjoint
system $|3K_{S'}+H|$ is empty (see Theorems \ref {thm:class2} and \ref
{thm:class3});
recall that the case when the adjoint system is empty has been sorted
out from the beginning.

The latter results on rational surfaces, plus the aforementioned extension
of Segre's theorem, give a complete classification of surfaces of degree
$d>3g-3$ with linearly normal hyperplane sections,
which we reckon is the main output of this article. The classification
for $d\geq 4g-4$ had been given previously by the two first-named
authors in \cite{CD}.
Synthetically, the classification gives
the following list:
Veronese surfaces represented by plane curves of
degree at most 8,
Del Pezzo surfaces or 2--Veronese images thereof,
and surfaces represented by linear systems of $k$--gonal
curves, with $k\leq 5$. 

One of our important tools, besides projective and birational techniques,  is an analogue, or rather a slight strengthening, of Reider's method, which is based on a detailed study of properties of $m$--connected effective divisors on a smooth surface (see in particular Lemma \ref {l:1-conn}, due to the third-named author). 

Finally we point out that, on the way, we provide many auxiliary
results, too many to be described here,  that are not strictly
necessary for us, but
will be useful in future work, we believe.

\medskip
{\bf Acknowledgements}: The first author is a member of GNSAGA of the
Istituto Nazionale di Alta Matematica ``F. Severi''.
The second author acknowledges
support from the ANR project FanoHK, grant ANR-20-CE40-0023.
The third author is a member of Centro de An\'alise Matem\'atica, Geometria e Sistemas Din\^amicos and this work was partially funded by FCT/Portugal through project UIDB/04459/2020 with DOI identifier 10-54499/UIDP/04459/2020.

The authors thank the anonymous referee for the careful reading of the paper and for the appropriate remarks and suggestions that helped correct some minor mistakes and improve the exposition in a few places.

\section{Setup and notation} \label{sec:prel}

\subsection{Setup and notation} \label{ssec:setup} In this paper we
 consider irreducible complex projective surfaces $S \subset \P^n$ that are  linearly
normal and non-degenerate; we let $d$ be the degree of
$S$, and assume $n\geq 3$.
We set $d=n+a$, with $a \geq -1$.
Let  $\pi: S'\longrightarrow  S$
be  the minimal desingularization of $S$ and  set
$H:=\pi^*(\sO_{S}(1))$; it is a nef and big line bundle on $S'$,
generated by its global sections.
The general curve $C\in |H|$ is smooth and irreducible, and we 
denote  its genus by $g$, that will be called the \emph{sectional genus}  of $S$. Note that $H^2=d$.
By the minimality assumption, there is no $(-1)$-curve $\theta$ on $S'$  such that $H\cdot \theta=0$. 

We assume $d\geq 2g-1$.  Since $2g-2=K_{S'}\cdot H+ H^2$, this
assumption is equivalent to the assumption $K_{S'}\cdot H<0$.  Since
$H$ is nef, $K_{S'}\cdot H<0$ implies that $h^0(S', mK_{S'})=0$ for
all $m\in \N$, and so the Kodaira dimension of $S'$ is
$-\infty$.

We  set $q:=q(S')$ the \emph{irregularity} of $S'$.
If $q>0$, since $\kappa(S') = -\infty$,
the \emph{Albanese morphism} of $S'$ is a morphism
$\alb: S'\longrightarrow \Gamma$, where $\Gamma$ is a smooth,
projective curve of genus $q$, and the general fibre $G$ of $\alb$
is a smooth rational curve. The fibres of $\alb$ form a pencil of
genus $q$, called the \emph{Albanese pencil} of $S'$.

For all positive integers $m$, we call the  linear system
$|mK_{S'}+H|$ 
 the \emph{$m$--adjoint system} to $H$.

\subsection{Further notation} For all $e\in {\bf N}$, we let $\bF_e$ be the rational ruled surface
$\bP(\sO_{\bP^1}\oplus \sO_{\bP^1}(-e))$, and we denote by
$E$  a section with self-intersection $-e$
(in case $e>0$ this  section is  unique),   and
by $F$ the class of the fibres of the structure map $\bF_e\longrightarrow \bP^1$.

For all $d\in {\bf N}$, we will denote by $v_d$ the \emph{$d$--Veronese map} and by $V_d\subset \bP^{\frac {d(d+3)}2}$ the \emph{$d$--Veronese image of $\bP^2$}. 

A complete linear system of plane curves of degree $d$ with $n$ base points of multiplicities $m_1,\ldots, m_n$ will be denoted by $(d;m_1,\ldots, m_n)$. We will use the exponential notation $(d;m^{i_1}_1,\ldots, m^{i_n}_n)$ for repeated multiplicities. 

\def\lineq{\sim}
\def\numeq{\equiv}
We will denote by $\lineq$ the linear equivalence and by $\numeq$ the
numerical equivalence of divisors.

For all $m\in \N$, an effective divisor $D$  on a smooth,
irreducible, projective surface $T$ is
\emph{$m$-connected} if, for every decomposition
$D=D_1+D_2$ with $D_1$ and $D_2$ effective and non-zero, one has
$D_1\cdot D_2 \geq m$.

We call \emph{$(-1)$-divisor} a $1$-connected divisor $D$ on a smooth,
irreducible, projective surface
$T$ such that $D^2 = K_T\cdot D = -1$ and the intersection form of $D$ is negative definite.
For all such divisor, there exists a morphism $f:T \to T'$ onto a
smooth, projective surface $T'$, which contracts $D$ to a point and is
an isomorphism on the complement of $D$.

\subsection {Simple internal projections}\label{ssec:sip} Let $S \subset \bP^N$ be a degree $d$ surface, of sectional genus $g$.
We call
\emph{simple internal projection} of $S$
a surface $T\subset \bP^{N'}$ obtained by projecting $S$ from a
curvilinear subscheme $Z$ of length $b$
supported on the smooth locus of $S$, with    
$N'=N-\dim (\langle Z\rangle)-1$, such that the intersection scheme of $\langle Z\rangle$ with $S$ is $Z$ and such that the projection map
is birational.
We recall that a scheme $Z$ is \emph{curvilinear} if for every point $p$
in the support of $Z$, the Zariski tangent space of $Z$ at $p$ has
dimension at most one.

For $T$ a simple internal projection of $S$ as above,
one has $\deg(T) = d-b$ and $T$ has the
same sectional genus $g$ as $S$.
Note that, if $d-b \geq 2g+1$ and $S$ is regular and linearly normal, 
then any projection from a
curvilinear subscheme $Z$ of length $b$ supported on the smooth locus
of $S$ is a simple internal projection,
and $N'=N-b$,
because in this case the linear system of hyperplane sections of $S$
passing through $Z$ restricts on its general member to a complete,
non-special, very ample linear system.

  In conclusion, note that $T\subset \bP^{n}$ is a simple internal projection if there exists a surface $S\subset \bP^{n+1}$ such that $T$ is the projection of $S$    from  a smooth point $p\in S$ and the projection is birational.

\section{Some preliminary results} 
\label{S:prelim2}

\noindent
We keep the setup and notation as in Section \ref {ssec:setup}.

\begin{lem}
\label{l:g->n}
Let $S$ be a degree $d$, irreducible and non-degenerate surface in
$\P^n$ with sectional genus $g$. 
If $d>3g-3$ and $S$ has linearly normal hyperplane sections, then
$d < \frac 3 2 n$.
\end{lem}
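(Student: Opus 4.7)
The plan is to convert the linear-normality assumption on the general hyperplane section into an identity between $n$, $d$, and $g$ via Riemann--Roch, and then substitute into the hypothesis $d > 3g-3$.

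First, I would let $C\in|H|$ be a general hyperplane section of $S$: this is a smooth irreducible curve of degree $d$ and genus $g$, non-degenerate in the hyperplane $\P^{n-1}$ it spans. The assumption that $S$ has linearly normal hyperplane sections means exactly that $h^0(C,\sO_C(1))=n$.

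Next, from the setup we have $d\geq 2g-1$, so $\sO_C(1)$ is non-special, and Riemann--Roch on $C$ gives $h^0(C,\sO_C(1))=d-g+1$. Combining with the preceding identity yields $n=d-g+1$, equivalently $g=d-n+1$. Substituting into the hypothesis $d>3g-3$ then gives
\[
d>3(d-n+1)-3=3d-3n,
\]
hence $3n>2d$, which is the claimed inequality $d<\tfrac32 n$.

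There is essentially no obstacle here --- the argument is a one-line Riemann--Roch computation. What makes the linear normality of hyperplane sections a genuine hypothesis, rather than a consequence of the linear normality of $S$, is that the connecting map $H^1(\sO_S)\to H^1(\sO_S(1))$ arising from $0\to \sO_S\to \sO_S(1)\to \sO_C(1)\to 0$ need not be injective when $q(S)>0$; so the extra hypothesis really does work in the irregular case.
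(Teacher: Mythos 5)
Your proof is correct and is essentially identical to the paper's: both use non-speciality of $\restr H C$ plus linear normality of the hyperplane section to get $n=d-g+1$ by Riemann--Roch, and then substitute into $d>3g-3$. The only (immaterial) difference is that the paper deduces non-speciality directly from $d>3g-3$ (which gives $d\geq 3g-2\geq 2g-1$ for $g\geq 1$), whereas you invoke the standing assumption $d\geq 2g-1$ from the setup.
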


\begin{proof}
One has $3g-2 \geq 2g-1$ if $g\geq 1$, hence the divisor $\restr H C$
is non-special for all $g$.
Since,  by the hypotheses,   the hyperplane sections of $S$ are linearly normal, one has
$n = d-g+1$ by Riemann--Roch.
Then
\[
  d>3g-3
  \iff
  d> 3(d-n),
\]
which proves the assertion.
\end{proof}

The rays generated by $H$ and $K_{S'}$ in the cone of divisors of $S'$
lie respectively inside and outside of the cone of effective divisors,
since $H$ is effective and no multiple of $K_{S'}$ is effective,  because in our hypotheses, $S'$ has Kodaira dimension $-\infty$.  
When we let the integer $m$ move to infinity, the ray generated by
$mK_{S'}+H$ travels in straight line between the two rays generated by
$H$ and $K_{S'}$ respectively; therefore, if $m$ is large enough, then 
no multiple of $mK_{S'}+H$ has sections. More  precisely  we have:

\begin{lem}\label{lem:ad2_intro}
Suppose that
$d > \frac {2m_0}{m_0-1}(g-1)$ for some integer $m_0$.
Then for all $m\geq m_0$, no multiple of $mK_{S'}+H$ has sections.
In particular, if $d>3g-3$, then for all $m\geq 3$, no multiple of
$mK_{S'}+H$ has sections.
\end{lem}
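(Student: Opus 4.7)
The plan is to derive a contradiction by intersecting with the nef divisor $H$. If some multiple $k(mK_{S'}+H)$ with $k\geq 1$ had a non-zero section, it would be linearly equivalent to an effective divisor, and hence would have non-negative intersection with the nef divisor $H$. I would show that under the hypothesis this intersection is in fact strictly negative.

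First I would compute, using the relation $K_{S'}\cdot H = 2g-2-d$ which follows from the setup (namely $2g-2=K_{S'}\cdot H + H^2$ with $H^2=d$),
\[
(mK_{S'}+H)\cdot H \;=\; m(2g-2-d)+d \;=\; 2m(g-1)-d(m-1),
\]
so that $(mK_{S'}+H)\cdot H<0$ is equivalent to $d(m-1)>2m(g-1)$. When $g=0$ this is automatic (for $m\geq 2$ and $d\geq 1$), so I may assume $g\geq 1$, in which case the condition rewrites as $d>\tfrac{2m(g-1)}{m-1}$.

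Next I would observe that the function $m\mapsto \tfrac{2m}{m-1}=2+\tfrac{2}{m-1}$ is strictly decreasing for $m\geq 2$. Combined with the hypothesis $d>\tfrac{2m_0}{m_0-1}(g-1)$, this yields, for every $m\geq m_0$,
\[
d \;>\; \frac{2m_0(g-1)}{m_0-1} \;\geq\; \frac{2m(g-1)}{m-1},
\]
so that $(mK_{S'}+H)\cdot H<0$, and therefore $k(mK_{S'}+H)\cdot H<0$ for every $k\geq 1$, contradicting the existence of a section as explained above. The ``in particular'' clause then follows by setting $m_0=3$, which gives the threshold $\tfrac{6}{2}(g-1)=3g-3$.

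I do not expect any real obstacle here: the lemma is a direct numerical consequence of the sectional genus formula for $K_{S'}\cdot H$ together with the nefness of $H$, and the threshold $\tfrac{2m(g-1)}{m-1}$ is precisely the value of $d$ at which $(mK_{S'}+H)\cdot H$ changes sign.
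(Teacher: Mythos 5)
Your proposal is correct and follows essentially the same route as the paper: both compute $(mK_{S'}+H)\cdot H = 2m(g-1)-d(m-1)$ and conclude negativity under the hypothesis, the only cosmetic difference being that the paper intersects with a general irreducible curve $C\in|H|$ and cites the standard fact that a divisor meeting such a curve negatively has no effective multiples, whereas you invoke the nefness of $H$ directly. Your explicit monotonicity check of $m\mapsto \frac{2m}{m-1}$ and the separate treatment of $g=0$ are slightly more careful than the paper's one-line assertion, but there is no substantive difference.
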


\begin{proof}
Let $C$  be a general curve in $|H|$. It is an irreducible curve, and $C^2=d \geq 0$.
Therefore, if $(mK_{S'}+H)\cdot C <0$,
then no multiple of $mK_{S'}+H$ has sections,
see \cite[Useful Remark III.5]{beauville} for instance.
Now,
\[
  (mK_{S'}+H)\cdot C
  = m(K_{S'}+H)\cdot H -(m-1) H^2
  = m(2g-2)-d(m-1),
\]
which is negative if $m \geq m_0$.
\end{proof}

In  Section \ref{sec:nonempt} below, 
 we classify rational surfaces under the 
assumption that the tri-adjoint linear system $|3K_{S'}+H|$ is
empty.  By Lemma  \ref {lem:ad2_intro}, this is certainly the case if $d>3g-3$.  The general idea is to use that this implies that, since $S$ is
rational, any irreducible curve in $|2K_{S'}+H|$ is rational.

If $d=3g-3$, it is possible that $3K_{S'}+H$ has sections. An example
is provided by the Veronese surface $V_9 \subset \P^{54}$:
then $S'=\P^2$ is the projective plane, and $H=9L$ where $L$ is the
class of lines, so that $d=81$ and $g = 28$.

\begin{prop}\label{qd}
Let $S, S',H$ be as  in Section \ref {ssec:setup}.
The following facts hold: \\
\begin{inparaenum}[(i)]
\item\label{qd-lem:adj}
  $h^0(S', K_{S'}+H)=g-q$; in particular, $q\leq g$, and
  $h^0(S', K_{S'}+H)=0$ if and only if $g=q$;
\item\label{qd-i}
 $h^1(S',H)\leq q$;\\
\item\label{qd-ii}
  $g=a+1-q+h^1(S', H)\leq a+1$;\\
\item\label{qd-iii}
 if $g=q$ then $a+1= 2q-h^1(S',H)$, and in particular $q\leq
  a+1\leq 2q$; \\
\item\label{qd-iv}
 if $q=a+1$ then $g=q=h^1(S', H)$; \\
\item\label{qd-v}
 the hyperplane sections of $S$ are linearly normal if and only if   $q=h^1(S',H)$, and this happens if and only if $g=a+1$;  \\
\item\label{qd-vi}
 if $q=0$, then $g=a+1$ and the hyperplane sections of $S$ are
 linearly normal;
 if moreover $d\geq 2g+1$, then  $S$ has only isolated singularities;\\
\item\label{qd-viii}
 $(K_{S'}+H)^2=K_{S'}^2+4g-4-d$;\\
\item\label{qd-ix}
  $(K_{S'}+H)^2\leq K_{S'}^2-n+3a$, and if  $g=a+1$ (i.e., if the hyperplane sections of $S$ are linearly
  normal),  in particular if $q=0$,
  then equality holds.
\end{inparaenum}
\end{prop}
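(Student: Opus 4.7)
The plan is to extract essentially everything from two short exact sequences attached to a general smooth irreducible $C \in |H|$: the ideal-sheaf sequence
\[
0 \to \sO_{S'} \to \sO_{S'}(H) \to \sO_C(H) \to 0,
\]
and, using the adjunction isomorphism $(K_{S'}+H)|_C \cong K_C$, its twist by $K_{S'}$,
\[
0 \to \sO_{S'}(K_{S'}) \to \sO_{S'}(K_{S'}+H) \to \omega_C \to 0.
\]
The cohomological inputs I will rely on are: (a) $H$ is nef and big (with $H^2 = d > 0$), so Kawamata--Viehweg vanishing gives $h^i(K_{S'}+H) = 0$ for $i > 0$; (b) $\kappa(S') = -\infty$ implies $p_g(S')=0$, and Serre duality yields $h^1(K_{S'}) = h^1(\sO_{S'}) = q$; (c) $\deg(H|_C) = d \geq 2g - 1 > 2g - 2$, so $H|_C$ is non-special and $h^1(\sO_C(H)) = 0$; (d) $h^2(H) = h^0(K_{S'} - H) = 0$, since $H$ is big and nef while no positive multiple of $K_{S'}$ is effective, so $K_{S'} - H$ cannot be effective either; (e) linear normality of $S$ gives $h^0(H) = n+1$.

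For (i), the long cohomology sequence of the second short exact sequence yields $h^0(K_{S'}+H) = h^0(\omega_C) - h^1(K_{S'}) = g - q$. For (ii), the long sequence of the first short exact sequence, together with the vanishing $h^1(\sO_C(H))=0$, produces a surjection $H^1(\sO_{S'}) \twoheadrightarrow H^1(H)$, hence $h^1(H) \leq q$. For (iii), Riemann--Roch combined with $K_{S'} \cdot H = 2g - 2 - d$ gives $\chi(H) = d - g - q + 2$; then $h^1(H) = (n+1) - \chi(H) = g + q - a - 1$, which rearranges to the claimed formula, and the bound $g \leq a+1$ is a restatement of (ii).

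Items (iv) and (v) are purely algebraic rearrangements of (iii) together with (i) and (ii): substituting $g = q$ into (iii) yields $a + 1 = 2q - h^1(H)$, and $0 \leq h^1(H) \leq q$ gives the bounds $q \leq a + 1 \leq 2q$; substituting $q = a + 1$ yields $g = h^1(H)$, and then $h^1(H) \leq q \leq g$ forces $g = q = h^1(H)$. For (vi), the same long sequence used for (ii) computes $h^0(\sO_C(H)) = n + q - h^1(H)$; after identifying $\sO_C(H)$ with the pullback of the hyperplane bundle on the general hyperplane section $\bar C \subset \P^{n-1}$, linear normality of $\bar C$ becomes $h^0(\sO_C(H)) = n$, equivalently $h^1(H) = q$, which by (iii) is the same as $g = a+1$. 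Item (vii) is a direct specialization: $q = 0$ forces $h^1(H) = 0$ via (ii), hence $g = a+1$ via (iii), and linear normality follows from (vi). The isolated-singularity statement under $d \geq 2g + 1$ will follow from the fact that $H|_C$ is then very ample on a general $C \in |H|$, so $\pi$ embeds $C$ into $\P^{n-1}$ and no positive-dimensional component of the exceptional locus of $\pi$ can intersect a general hyperplane section of $S$.

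Finally, (viii) is the direct expansion $(K_{S'}+H)^2 = K_{S'}^2 + 2 K_{S'} \cdot H + H^2$ combined with $K_{S'} \cdot H = 2g - 2 - d$. For (ix), substituting $d = n + a$ into (viii) and rearranging gives $(K_{S'}+H)^2 = K_{S'}^2 - n + 3a - 4(a + 1 - g)$, so (iii) immediately yields the inequality, with equality exactly when $g = a + 1$, which by (vi) is precisely linear normality of the hyperplane sections. The main obstacle I expect is the identification in (vi) of the line bundle $\sO_C(H)$ on the smooth model with the hyperplane bundle of the (possibly singular) hyperplane section $\bar C$, needed to convert the cohomological criterion $h^0(\sO_C(H)) = n$ into the projective notion of linear normality; the isolated-singularity assertion in (vii) is the other delicate point, where the hypothesis $d \geq 2g + 1$ must be exploited through the very ampleness of $H|_C$.
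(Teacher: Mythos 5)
Your proposal is correct and follows essentially the same route as the paper: both arguments rest on the two restriction sequences to a general smooth $C\in|H|$, Kawamata--Viehweg vanishing, the non-speciality of $\restr H C$, and the characterization of linear normality of the hyperplane sections via surjectivity of $H^0(S',H)\to H^0(C,\restr H C)$. The only (immaterial) divergence is in item (iii), where you apply Riemann--Roch on the surface $S'$ (using $h^2(S',H)=0$ and $\chi(\O_{S'})=1-q$) while the paper instead computes $h^0(C,\restr H C)$ in two ways, once from the restriction sequence and once from Riemann--Roch on $C$; both yield $g=a+1-q+h^1(S',H)$.
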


\begin{rem}
In the above proposition, the assumption that $d\geq 2g-1$ may be
substituted with the assumptions that $S$ has Kodaira dimension
$-\infty$ and $h^1(C,  \restr H C)=0$.
\end{rem}

\begin{proof}   
Let $C\in |H|$ be a smooth curve.
Since $H$ is big and nef, one has $h^1(S', K_{S'}+H)=0$
by Kawamata-Viehweg vanishing.
Then the long exact sequence obtained from 
\begin{equation*}
  0\to\mathcal O_{S'}(K_{S'})\to
  \mathcal O_{S'}(K_{S'}+H)\to \mathcal \O_C(K_C) \to 0.
\end{equation*}
gives $g=h^0(S', K_{S'}+H)+q$, which proves \eqref{qd-lem:adj}.

One has $h^1(C,\restr H C)=0$ hence, from the long exact sequence
obtained from  
\begin{equation}\label{eq:wet}
 0\to\mathcal O_{S'}\to  \O_{S'}(H) \to \O_C(\restr H C)\to 0,
\end{equation}
we have  $h^1(S',H)\leq q$, proving \eqref{qd-i}.


Since $S$ is linearly normal,  one has  $h^0(S', H)=n+1$, and so,
again by \eqref {eq:wet}, one has    $h^0(C,\restr H
C)=n+q-h^1(S',H)\leq n+q$.  On the other hand, since $h^1(C,\restr H
C)=0$,  we have $h^0(C,\restr H C)=n+a-g+1$. Comparing both expressions
of $h^0(C,\restr H C)$, we obtain $g= a+1 - q+h^1(S',H)\leq a+1$,
proving \eqref{qd-ii}  (cf. \cite[Prop. 1.1]{how}).
Then \eqref{qd-iii} follows right
away. As for \eqref{qd-iv}, if $q=a+1$, then from
\eqref{qd-lem:adj}, \eqref{qd-i}, and \eqref{qd-ii},
one has $h^1(S',H)\leq q\leq g=h^1(S',H)$.

The hyperplane sections of $S$ are linearly normal if and only if
the restriction map $H^0(S', H) \to H^0(C, \restr H C)$ is
surjective. By  \eqref {eq:wet}, and since $h^1(C,\restr H C)=0$, this
happens if and only if $h^1(S',H)= q$, which is equivalent to $g=a+1$
by \eqref{qd-ii},  proving \eqref{qd-v}.

In particular, if $q=0$, then $h^1(S',H)=0$  by \eqref{qd-i},   and by \eqref{qd-v} the hyperplane
sections of $S$ are linearly normal, and $g=a+1$.
If $d\geq 2g+1$, then $\restr H C$ is very ample, hence
the general hyperplane section of $S$ is smooth,
and so
$S$ has only isolated singularities, which proves \eqref{qd-vi}.


Then $(K_{S'}+H)^2=K_{S'}^2+4g-4-2n-2a+n+a=K_{S'}^2+4g-4-n-a$, proving
\eqref{qd-viii}.
Finally, since  $g-1\leq a$, also \eqref{qd-ix} follows.
\end{proof}

\begin{lem}[{\cite[Lemma 2.6]{m}}]
\label{l:1-conn}
Let $T$ be a smooth, irreducible, projective surface,
and let $L$ be a big and nef divisor on $T$. Then all divisors in
$|L|$ are $1$-connected, and thus have non-negative arithmetic genus.
  
If $L =A+B$ is a decomposition of $L$ with $A$ and $B$ effective
divisors such that $A \cdot B = 1$ and $A^2\leq B^2$, then $A$ and $B$
are 1-connected, and only the following possibilities can occur:
\begin{compactenum}[(a)]
\item
  $A^{2} =-1$ (and so $L\cdot A=0$);
\item
  $A^{2} =0$ (and so $L\cdot A=1$);
\item
$A^{2}=B^{2}=1$, $A \equiv B$, and $L^{2}=4$.
\end{compactenum}
\end{lem}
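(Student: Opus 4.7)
My plan is to split the proof into three parts as the statement does. For the first part, I prove the $1$-connectedness of members of $|L|$ via the classical Hodge-index argument: assuming for contradiction an effective decomposition $L=D_1+D_2$ with $D_1\cdot D_2\leq 0$, nefness of $L$ gives $D_i^2\geq -D_1\cdot D_2\geq 0$. Consider the auxiliary divisor $E=(L\cdot D_1)D_2-(L\cdot D_2)D_1$; then $E\cdot L=0$, so $E^2\leq 0$ by Hodge index (since $L^2>0$). But
\[
  E^2=(L\cdot D_1)^2 D_2^2+(L\cdot D_2)^2 D_1^2-2(L\cdot D_1)(L\cdot D_2)(D_1\cdot D_2)
\]
is a sum of three non-negative terms under the assumptions, hence $E^2=0$ and each term vanishes; a short case analysis then forces $L\cdot D_i=0$ for some $i$, and Hodge index (applied again to the effective nonzero $D_i$, which cannot be numerically trivial) yields $D_i^2<0$, contradicting $D_i^2\geq 0$. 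Thus $D_1\cdot D_2\geq 1$. The non-negativity of the arithmetic genus then follows from $h^0(\O_D)=1$ (equivalent to numerical $1$-connectedness) via $p_a(D)=h^1(\O_D)\geq 0$.

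For the individual $1$-connectedness of $A$ and $B$, I would bootstrap from the first part. Given a decomposition $A=A_1+A_2$ into nonzero effective divisors, apply the first part to the two effective decompositions $L=(B+A_1)+A_2$ and $L=(B+A_2)+A_1$; the sum of the resulting inequalities, combined with $A\cdot B=1$, yields $2A_1\cdot A_2\geq 1$, so $A_1\cdot A_2\geq 1$ by integrality. The symmetric argument handles the decompositions of $B$.

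Finally, to classify $(A^2,B^2)$, I restrict the intersection form to the subspace $V\subset N^1(T)_{\mR}$ spanned by $A$ and $B$. Since $L=A+B\in V$ satisfies $L^2>0$, Hodge index allows the restricted form at most signature $(1,1)$, so
\[
  \det\begin{pmatrix} A^2 & 1 \\ 1 & B^2 \end{pmatrix} = A^2 B^2 - 1 \leq 0.
\]
Combined with the bounds $A^2,B^2\geq -1$ coming from $L\cdot A, L\cdot B\geq 0$, the ordering $A^2\leq B^2$, and integrality, this leaves exactly $A^2\in\{-1,0,1\}$; in the last case $B^2=1$ is forced, the Gram matrix becomes singular, so $A$ and $B$ are numerically proportional in $V$, whence $A\equiv B$ and $L^2=4$. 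These are precisely cases (a), (b), (c). I expect the main technical point to be the equality analysis in the first part; the remaining arguments are elementary manipulations using Hodge index and integrality.
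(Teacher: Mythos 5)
The paper offers no proof of this lemma: it is imported verbatim from \cite[Lemma 2.6]{m}, so there is no internal argument to compare yours with, and your proposal has to stand on its own. It does stand: all three parts are correct. The first part is the standard Hodge-index argument, and your equality analysis is sound (if both $L\cdot D_i>0$ then all three vanishing terms give $D_1^2=D_2^2=D_1\cdot D_2=0$, hence $L^2=0$, absurd; then $L\cdot D_i=0$ with $D_i$ effective non-zero kills $D_i^2\geq 0$ by the index theorem). The classification in the third part via the Gram determinant $A^2B^2-1\leq 0$ together with $A^2,B^2\geq -1$ and integrality is exactly the sublattice index-theorem computation one expects from the cited source. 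The bootstrapping in the second part --- summing the inequalities from the two decompositions $L=(B+A_1)+A_2$ and $L=(B+A_2)+A_1$ to get $1+2A_1\cdot A_2\geq 2$ and concluding by integrality --- is a clean self-contained proof of the case $m=1$ of Lemma~\ref{l:CFM}(i), which the paper also only quotes. Two cosmetic remarks, neither of which is a gap: the implication ``$D$ is $1$-connected $\Rightarrow h^0(\O_D)=1$'' that you invoke for $p_a(D)\geq 0$ is a non-trivial standard fact (Ramanujam's lemma) and only an implication, not an equivalence as your parenthesis suggests, so it deserves a citation; and in case (c) the passage from the singular Gram matrix to $A\equiv B$ should spell out that the null vector of the restricted form is $L$-orthogonal and isotropic, hence numerically trivial by the index theorem, that neither $A$ nor $B$ is numerically trivial since they are non-zero and effective, and that the proportionality constant is then $A\cdot B/B^2=1$.
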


\begin{lem}[{\cite[Lemma (A.4)]{cfm}}]
\label{l:CFM}
  Let $T$ be a smooth, irreducible, projective surface.
  Let $D$ be an $m$-connected effective divisor on $T$. Let $D_1,D_2$
  be effective divisors such that $D=D_1+D_2$.
  \\(i) If $D_1 \cdot D_2 = m$, then $D_1$ and $D_2$ are
  $\lfloor{(m+1)/2}\rfloor$-connected.
  \\(ii) If $D_1$ is minimal with respect to the condition $D_1\cdot
  (D-D_1)=m$, then $D_1$ is
  $\lfloor{(m+3)/2}\rfloor$-connected.
\end{lem}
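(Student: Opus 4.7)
The strategy is uniform for both parts: given a proper decomposition $D_1 = A_1 + A_2$ into nonzero effective divisors, I would apply the $m$-connectedness of $D$ to the two symmetric decompositions $D = A_1 + (A_2 + D_2)$ and $D = A_2 + (A_1 + D_2)$, and then add the resulting inequalities. The expansions
\[
 A_i \cdot (D - A_i) = A_1 \cdot A_2 + A_i \cdot D_2 \quad (i=1,2)
\]
combine into the single master inequality
\[
 2\, A_1 \cdot A_2 + D_1 \cdot D_2 \geq 2m,
\]
from which both conclusions will follow by feeding in the extra hypothesis of each part.

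For part (i), one substitutes $D_1 \cdot D_2 = m$ directly to get $A_1 \cdot A_2 \geq m/2$. Since $A_1 \cdot A_2$ is an integer, this yields $A_1 \cdot A_2 \geq \lceil m/2\rceil = \lfloor (m+1)/2\rfloor$, which is the required bound for $1$-connectedness of $D_1$ (and by symmetry, of $D_2$).

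For part (ii), the first step is to pin down the meaning of minimality: $D_1$ is minimal among effective divisors $D_1'$ satisfying $D_1' \cdot (D - D_1') = m$, the ordering being "$D_1' \leq D_1$ iff $D_1 - D_1'$ is effective". Since $D$ is $m$-connected, every effective divisor $A$ with $0 < A < D$ satisfies $A \cdot (D-A) \geq m$, and minimality forbids equality whenever $A$ is a proper subdivisor of $D_1$. Hence for any $A_1, A_2$ as above, both inequalities $A_i \cdot (D - A_i) \geq m+1$ are strict by one unit. Repeating the addition trick now yields $2\, A_1 \cdot A_2 + D_1 \cdot D_2 \geq 2m+2$, and since $D_1 \cdot D_2 = m$ (which is part of the minimality hypothesis), we get $A_1 \cdot A_2 \geq (m+2)/2$, hence $A_1\cdot A_2 \geq \lfloor (m+3)/2\rfloor$ after rounding up to the next integer.

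The computation is entirely routine and there is no serious obstacle; the only delicate point is the correct formalization of "minimal" in (ii), which must be read as \emph{no proper effective subdivisor of $D_1$ achieves the value $m$}. The arguments for (i) and (ii) are formally identical, differing only by the single extra unit that minimality buys.
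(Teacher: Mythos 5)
The paper does not prove this lemma itself---it simply cites \cite[Lemma (A.4)]{cfm}---so there is no in-paper argument to compare against; your proof is correct and is essentially the standard argument from that reference: apply $m$-connectedness of $D$ to the two decompositions $D=A_1+(A_2+D_2)$ and $D=A_2+(A_1+D_2)$, sum to get $2\,A_1\cdot A_2 + D_1\cdot D_2 \geq 2m$, and in part (ii) let minimality upgrade each inequality by one unit. Your reading of ``minimal'' (no proper non-zero effective subdivisor of $D_1$ achieves the value $m$, which by $m$-connectedness of $D$ forces $A_i\cdot(D-A_i)\geq m+1$) is the intended one, as confirmed by how the lemma is invoked in Proposition~\ref{composto}.
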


\begin{lem}
\label{reider-1con}
Let $T$ be a smooth, irreducible, projective surface, 
equipped with a big and nef divisor $L$ satisfying $L^2\geq 3$.
Let $D$ be a non-zero, effective, divisor, such that $D^2=0$
and $L\cdot D=1$.
Then $D$ is 1-connected. 

Moreover, if $D$ decomposes as $D=A+B$ with $A$ and $B$ effective
divisors such that $A\cdot B= 1$, then $A^2=B^2=-1$ and, possibly up to
exchanging $A$ and $B$, 
$L\cdot A=0$ and $L\cdot B=1$.

Assume in addition that $L=K_T+L_0$ with $L_0$ a big and nef divisor, and
$D$ is not 2-connected.
Then for every decomposition $D=A+B$ as above with $L\cdot A=0$,
one has, $K_T\cdot A=-1$, hence $A$ is a $(-1)$-divisor,
and $L_0\cdot A=1$.
\end{lem}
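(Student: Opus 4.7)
The plan is to derive all three assertions from the Hodge index theorem combined with integrality constraints, the hypothesis $L^2 \geq 3$ being precisely what makes these arguments bite.

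For the 1-connectedness, I would take any decomposition $D = D_1 + D_2$ into non-zero effective divisors. Since $L \cdot D = 1$ and $L$ is nef, one has $L \cdot D_1 = 0$ and $L \cdot D_2 = 1$ up to relabelling. Hodge index applied to $(L,D_1)$ gives $L^2 D_1^2 \leq 0$; equality would force $D_1$ numerically proportional to $L$ and hence numerically trivial, contradicting that $D_1$ is effective and non-zero, so $D_1^2 \leq -1$. Applied to $(L,D_2)$, Hodge yields $L^2 D_2^2 \leq 1$, hence $D_2^2 \leq 1/L^2 \leq 1/3$, and the integrality of $D_2^2$ gives $D_2^2 \leq 0$. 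Plugging these bounds into $D_1^2 + 2 D_1 \cdot D_2 + D_2^2 = D^2 = 0$ and invoking integrality once more yields $D_1 \cdot D_2 \geq 1$.

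For the second assertion, given a decomposition $D = A+B$ with $A \cdot B = 1$ and (after exchanging) $L \cdot A = 0$, the previous step already provides $A^2 \leq -1$, $B^2 \leq 0$, and $A^2 + B^2 = -2$; this leaves only the two possibilities $(A^2, B^2) = (-1,-1)$ or $(-2,0)$. To rule out the latter, I would apply Hodge index to the auxiliary divisor $E := A + 2B$: direct computation gives $E^2 = A^2 + 4 A \cdot B + 4 B^2 = 2$ and $L \cdot E = 2$, so Hodge would demand $2 L^2 \leq 4$, contradicting $L^2 \geq 3$. This trick with the multiplier $2$ is, I expect, the main technical point of the lemma, and also explains the precise numerical threshold $L^2 \geq 3$ in the hypotheses.

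For the third assertion, the decomposition $D = A+B$ is furnished by the failure of 2-connectedness; by Lemma~\ref{l:CFM}(i), $A$ is itself 1-connected, hence $p_a(A) \geq 0$, which combined with $A^2 = -1$ amounts to $K_T \cdot A \geq -1$. On the other hand, expanding $L \cdot A = 0$ as $(K_T + L_0) \cdot A = 0$ gives $K_T \cdot A = - L_0 \cdot A \leq 0$ since $L_0$ is nef and $A$ is effective. Thus $K_T \cdot A \in \{-1, 0\}$, and the value $0$ is incompatible with $p_a(A) = (1 + K_T \cdot A)/2$ being an integer; therefore $K_T \cdot A = -1$ and $L_0 \cdot A = 1$. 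Finally, $A$ is a $(-1)$-divisor: it is 1-connected, satisfies $A^2 = K_T \cdot A = -1$, and its intersection form is negative definite because each of its irreducible components lies in $L^{\perp}$, which is a negative definite sublattice by Hodge index applied to the big and nef divisor $L$.
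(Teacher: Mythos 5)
Your proof is correct and follows essentially the same route as the paper's: the same reduction via nefness to $L\cdot A=0$, $L\cdot B=1$, the same Hodge-index bounds $A^2\leq -1$, $B^2\leq 0$, the same exclusion of $(A^2,B^2)=(-2,0)$ by applying the index theorem to $A+2B$ (which is exactly the paper's auxiliary divisor $D+B$), and the same use of Lemma~\ref{l:CFM} plus nefness of $L_0$ in the last part, where your parity argument for $K_T\cdot A$ is just a rephrasing of the paper's $K_T\cdot A=2p_a(A)-1$ with $p_a(A)=0$. The only (welcome) addition is your explicit verification that the intersection form of $A$ is negative definite, a point the paper leaves implicit.
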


\begin{proof} 
First note that for any decomposition of $D=A+B$ with $A, B$ effective
divisors, we may assume without loss of generality that $L\cdot A=0$
and $L\cdot B=1$, because $L$ is nef and $L\cdot D=1$.
Then the index theorem implies that $A^2<0$ and $B^2\leq 0$.

Thus, the equality $D^2=A^2+2A\cdot B+B^2=0$ implies that $A\cdot B\geq
1$, hence $D$ is 1-connected.
   
Suppose now in addition that $A\cdot B=1$.
Then again from $D^2=A^2+2A\cdot B+B^2=0$, we have either
$A^2=-1$ and $B^2=-1$, and then the decomposition is as asserted in
the lemma,
or $A^2=-2$ and $B^2=0$.
This second possibility, however, cannot occur,
for otherwise we would have $(D+B)^2=2$, $L\cdot (D+B)=2$, and
$L^2\geq 3$, in contradiction with the index theorem.
This proves the second assertion of the lemma.
   
Assume, finally, that $L=K_T+L_0$ with $L_0$ a big and nef
divisor. 
Since $A$ is 1-connected, by Lemma~\ref{l:CFM},
and $A^2=-1$,
one has $K_T\cdot A = 2p_a(A)-1 \geq -1$.
Then $(K_T+L_0)\cdot A=0$ gives
$L_0 \cdot A = -2p_a(A)+1$.
Since $L_0$ is nef, this implies that $p_a(A)=0$, and 
$K_T\cdot A = -1$ and $L_0 \cdot A = 1$.
\end{proof}

\begin{prop}
\label{composto}
Let $T$ be a smooth, irreducible, projective surface,
and $D$ be a non-zero, effective divisor on $T$, such that $D$ is nef
and $D^2=0$.
Then:
\\(i) for all effective $A$ such that $A\leq D$, one has $D\cdot A=0$
and $A^2 \leq 0$;
\\ (ii) $D$ is $0$-connected.

If moreover $D=K_T+L$ with $L$ nef and big,
and $T$ has negative Kodaira dimension,
then there exists a (possibly
non-linear) base-point-free pencil $\{G\}$ of rational curves, such
that $D$ consists 
of members of $\{G\}$.
\end{prop}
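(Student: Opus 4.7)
Parts (i) and (ii) follow from intersection theory combined with the Hodge index theorem. For (i), given $A\leq D$ effective, write $D=A+B$ with $B$ effective; expanding $0=D^2=D\cdot A+D\cdot B$ and using that $D$ is nef forces both non-negative terms to vanish, giving $D\cdot A=0$. For the inequality $A^2\leq 0$: if $A^2>0$, Hodge index applied to $A$ (now in the positive cone) would make the orthogonality $A\cdot D=0$ imply $D^2\leq 0$, with equality only when $D\equiv 0$; but $D$ is effective and non-zero, so $D\not\equiv 0$, and $D^2=0$ yields a contradiction. Part (ii) is then immediate: for any decomposition $D=A+B$ into non-zero effectives, $D\cdot A=0$ gives $A\cdot B=-A^2\geq 0$.

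For the main assertion, my plan is to extract the desired pencil from the movable part of $|mD|$ for $m$ sufficiently large. Setting $N_m:=(m-1)D+L=mD-K_T$, the sum of nef divisors is nef, and $N_m^2=2(m-1)\,L\cdot D+L^2>0$ using $L^2>0$ together with $L\cdot D>0$ (the latter from Hodge index, otherwise $L^2>0$ and $L\cdot D=0$ combined with $D^2=0$ would force $D\equiv 0$). Kawamata--Viehweg vanishing applied to $mD=K_T+N_m$ gives $h^i(mD)=0$ for $i\geq 1$, and Riemann--Roch---using $K_T\cdot D=-L\cdot D$ and $\chi(\sO_T)=1-q$ (since $p_g=0$ under $\kappa(T)=-\infty$)---yields $h^0(mD)=(1-q)+m(L\cdot D)/2$, which is $\geq 2$ for $m$ large. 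Decompose $|mD|=|M_m|+F_m$ into movable and fixed parts. Part (i) applied to $mD$ gives $M_m^2\leq 0$; combined with $M_m^2\geq 0$ (moving), this forces $M_m^2=0$, so $|M_m|$ is base-point-free (its base locus has length $\leq M_m^2$) with one-dimensional image. Stein factorization then produces $\phi\colon T\to C$ onto a smooth curve, whose fibres $\{G\}$ form the candidate pencil.

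To complete the proof, I check that the general fibre is rational and that $D$ consists of members of $\{G\}$. Applying (i) to $mD$ with $A=F_m$ gives $mD\cdot F_m=0$, hence $M_m\cdot F_m=0$; since $M_m$ is numerically a positive multiple of a fibre (being the pullback of a divisor from $C$), this forces $F_m$ to be vertical with respect to $\phi$, and $D\cdot G=0$ for every fibre $G$ follows. Using $G^2=0$, adjunction gives $L\cdot G=-K_T\cdot G=2-2g(G)$; nefness of $L$ yields $g(G)\leq 1$, and $g(G)=1$ would give $L\cdot G=0$, whence Hodge index (since $L^2>0$) would force $G\equiv 0$, impossible. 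Thus $g(G)=0$. Finally, $D\cdot F_p=0$ for every fibre $F_p$ forces $D\cdot C_j=0$ for every irreducible component $C_j$ of $F_p$ (the sum $\sum_j m_j\, D\cdot C_j=D\cdot F_p$ has non-negative terms summing to zero), and Zariski's lemma then identifies the part of $D$ supported in each fibre as a non-negative multiple of the fibre class. The main technical challenge is combining Kawamata--Viehweg vanishing with Riemann--Roch to establish $h^0(mD)\geq 2$; once the pencil is constructed, the rest follows formally from part (i), adjunction, and Hodge index.
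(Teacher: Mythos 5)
Parts (i) and (ii) of your argument are the same as the paper's (decompose $D^2=0$ into non-negative terms, then the index theorem). For the main assertion you take a genuinely different route. The paper argues by a connectedness dichotomy: if $D$ is $1$-connected, then $p_a(D)\geq 0$ together with $L\cdot D>0$ forces $K_T\cdot D=-2$ and $p_a(D)=0$, so $D$ itself is a member of the desired pencil (a linear one if $q=0$, the Albanese pencil if $q>0$); if $D$ is only $0$-connected, the paper takes a summand $A$ minimal with $A\cdot(D-A)=0$, uses Lemma~\ref{l:CFM} and the index theorem to get $A^2=0$, $p_a(A)=0$, and lets $A$ generate the pencil. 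You instead build the ruling globally: Kawamata--Viehweg applied to $mD=K_T+\bigl((m-1)D+L\bigr)$ plus Riemann--Roch gives $h^0(mD)\to\infty$, the movable part $M_m$ of $|mD|$ has square zero by (i) and is therefore base-point-free, Stein factorization yields a fibration whose general fibre is rational by adjunction, nefness of $L$, and the index theorem, and Zariski's lemma makes $D$ vertical. Both arguments are correct. Yours avoids any case distinction (on $q$ and on $1$-connectedness) at the cost of heavier machinery; the paper's stays at the level of the divisor $D$ itself and, importantly for later use (cf.\ Proposition~\ref{feixe} and Section~\ref{S:irreg}), identifies the pencil with the Albanese pencil when $q>0$, which your construction does not address. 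One small slip to fix: $mD\cdot F_m=0$ alone only gives $M_m\cdot F_m=-F_m^2\geq 0$; you should instead apply (i) with $A=M_m$, which gives $M_m^2+M_m\cdot F_m=0$ and hence $M_m\cdot F_m=0$ since $M_m^2=0$.
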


\begin{proof}
Let $A$ be an effective divisor such that $A\leq D$. Then there exists
an effective $B$ such that $D=A+B$. One has $D^2 = D\cdot A+D\cdot
B=0$,
and $D\cdot A$ and $D\cdot B$ are non-negative; therefore 
$D\cdot A = D\cdot B=0$. Then $A^2>0$ would contradict the index
theorem. This proves (i).

Consider a decomposition $D=A+B$ with $A$ and $B$ non-zero and effective.
Since $D^2 = A^2+B^2+2A\cdot B =0$, if  $A\cdot B <0$, then either
$A^2>0$ or $B^2>0$, which is impossible by (i). Thus $D$ is
$0$-connected.

Assume now that $D=K_T+L$ with $L$ nef and big, and $T$ has negative
Kodaira dimension.
Assume first that $D$ is $1$-connected. One has
$D^2 = D\cdot K_T + D\cdot L=0$, and $D\cdot L \geq 0$, hence
$D\cdot K_T \leq 0$. Since $D^2=0$ and $p_a(D) \geq 0$, one must have 
$D\cdot K_T = -2$ and $p_a(D) = 0$.
Then either $q=0$, and then $D$ moves in a linear base-point-free
pencil of rational curves, as required, or $q>0$, and then $D$ is a
fiber of the Albanese pencil.

If $D$ is not $1$-connected, since it is $0$-connected, there exists a
decomposition $D=A+B$ with $A$ and $B$ non-zero and effective, and
$A\cdot B = 0$. Assume $A$ is minimal with respect to this condition.
Then by Lemma~\ref{l:CFM}, $A$ is $1$-connected and $p_a(A) \geq 0$.
One has $A\cdot D = A^2 + A\cdot B = A^2$.
Since $A\cdot D = 0$ by (i), one has $A^2=0$.
By the index theorem, this implies that $A\cdot L >0$.
Now,
$A\cdot K_T + A\cdot L = 0$, and therefore $A\cdot K_T < 0$,
which implies that $p_a(A) = 0$. Then, as above, either $q=0$ and $A$
moves in a base-point-free pencil, or $q>0$ and $A$ is a fiber of the
Albanese pencil. Finally, since $A\cdot B=0$ and $B^2=0$, $B$ is a
union of members of the pencil $\{A\}$.
\end{proof}

\section{Vanishing of the first adjoint}
\label{S:vanish-adj1}

\begin{prop} \label{minimal}
Let $S,S',H,C$ be as in Section~\ref{ssec:setup},
and assume that $h^0(S',K_{S'}+H)=0$
(equivalently, $g=q$, by Proposition~\ref{qd}, \eqref{qd-lem:adj}).
Then: \\
\begin{inparaenum}[(i)]
\item  $h^0(S',K_{S'}+2H) =d+q-1>0$;\\
\item $K_{S'}+2H$ is nef;\\
\item $K_{S'}^2=8-8q$ and $(K_{S'}+2H)^2=0$,
  unless  $S$ is the Veronese surface $V_2$ of degree $4$ in $\bP^5$,
  in which case $K_{S'}^2=9$ and $(K_{S'}+2H)^2=1$;\\
\item  either $S=V_2$, or  $S$ is a scroll. 
\end{inparaenum}
\end{prop}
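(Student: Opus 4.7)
I would establish (i) by Riemann--Roch plus Kawamata--Viehweg, deduce nefness (ii) from the non-emptiness of $|K_{S'}+2H|$ together with the minimality of $\pi$, pin down $K_{S'}^2$ in (iii) by combining $K_{S'}+2H$ being nef with the Enriques--Kodaira classification, and conclude (iv) via Proposition~\ref{composto}. For (i), since $2H$ is big and nef, Kawamata--Viehweg gives $h^i(K_{S'}+2H)=0$ for $i>0$; Riemann--Roch expands as
\[
\chi(K_{S'}+2H) = \chi(\O_{S'}) + (K_{S'}+2H)\cdot H = (1-q+p_g) + (2g-2+d),
\]
which, using $p_g=0$ (from $\kappa(S')=-\infty$) and the hypothesis $g=q$, collapses to $d+q-1$; this is positive since $d\geq n-1\geq 2$ for a non-degenerate surface in $\bP^n$ with $n\geq 3$.

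For (ii), I would argue by contradiction: if $(K_{S'}+2H)\cdot\Gamma<0$ for some irreducible curve $\Gamma$, then any $D\in|K_{S'}+2H|$ produced by (i) decomposes as $D=c\Gamma+D'$ with $\Gamma\not\subset D'$, and $D\cdot\Gamma<0$ together with $D'\cdot\Gamma\geq 0$ forces $c\geq 1$ and $\Gamma^2<0$. When $H\cdot\Gamma\geq 1$, the bound $K_{S'}\cdot\Gamma < -2H\cdot\Gamma\leq -2$ conflicts with the adjunction inequality $K_{S'}\cdot\Gamma+\Gamma^2\geq -2$, forcing $\Gamma^2>0$, contrary to $\Gamma^2<0$. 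When $H\cdot\Gamma=0$, the curve $\Gamma$ is $\pi$-exceptional, hence not a $(-1)$-curve by minimality, and adjunction then forces $K_{S'}\cdot\Gamma\geq 0$ in every remaining sub-case (either $\Gamma^2=-1$ with $p_a(\Gamma)\geq 1$, or $\Gamma^2\leq -2$), so $(K_{S'}+2H)\cdot\Gamma\geq 0$, again a contradiction.

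For (iii), I would compute $(K_{S'}+2H)^2=K_{S'}^2+4K_{S'}\cdot H+4H^2=K_{S'}^2+8g-8=K_{S'}^2+8q-8$ using $g=q$; nefness from (ii) yields $K_{S'}^2\geq 8-8q$, while the Enriques--Kodaira classification applied to the ruled surface $S'$ of irregularity $q$ gives the reverse inequality $K_{S'}^2\leq 8(1-q)$ for $q\geq 1$ and $K_{S'}^2\leq 9$ for $q=0$, with $K_{S'}^2=9$ attained only when $S'\simeq\bP^2$. Hence $K_{S'}^2=8-8q$ except possibly when $S'\simeq\bP^2$; in that exceptional case $H=mL$ for some integer $m$, and $|K_{S'}+H|=\emptyset$ forces $m\leq 2$, while non-degeneracy in $\bP^n$ with $n\geq 3$ combined with linear normality excludes $m=1$, so $m=2$ and $S=V_2$, giving $K_{S'}^2=9$ and $(K_{S'}+2H)^2=1$.

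Finally, for (iv), outside the Veronese case $(K_{S'}+2H)^2=0$ with $K_{S'}+2H$ nef and effective, so Proposition~\ref{composto} applied with $L=2H$ (big and nef) yields a base-point-free pencil $\{G\}$ of rational curves on $S'$ such that $K_{S'}+2H$ is supported on members of $\{G\}$. Each such $G$ satisfies $G^2=0$ and $K_{S'}\cdot G=-2$ by adjunction, and $(K_{S'}+2H)\cdot G=0$ then forces $H\cdot G=1$; consequently $\pi|_G$ realizes $\pi(G)$ as a line on $S$, and the one-parameter family $\{\pi(G)\}$ covers $S$, exhibiting $S$ as a scroll. The principal technical subtlety is step~(ii): nefness is extracted without any Reider-type separation machinery, relying purely on the non-emptiness of $|K_{S'}+2H|$ from~(i), adjunction, and the minimality of $\pi$ to rule out both non-contracted and contracted offending curves.
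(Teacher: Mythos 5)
Your proposal is correct and follows essentially the same path as the paper: Kawamata--Viehweg plus Riemann--Roch for (i) (you compute $\chi$ on the surface where the paper restricts to $C$ and uses Riemann--Roch on the curve, which is equivalent), a minimality-of-$\pi$ contradiction for (ii), the computation $(K_{S'}+2H)^2=K_{S'}^2+8q-8$ combined with the classification bound on $K_{S'}^2$ for (iii), and Proposition~\ref{composto} with $L=2H$ for (iv). The only cosmetic differences are that your case analysis in (ii) is more elaborate than the paper's one-line reduction to a contracted $(-1)$-curve, and in the $S'\cong\P^2$ case of (iii) you identify $V_2$ via $|K_{S'}+H|=\emptyset$ and non-degeneracy rather than via $(K_{S'}+2H)^2=1\Rightarrow K_{S'}+2H\sim L$.
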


\begin{proof}  Since  $h^0(S', K_{S'}+H)=0$ by the hypotheses,
and   $h^1(S', K_{S'}+H)=0$ by the Kawamata--Viehweg
theorem, one has $h^0(S',K_{S'}+2H)=h^0(C, \restr[K_{S'}+2H] C)$.
Note that $(K_{S'}+2H)\cdot C=2g-2+d$, so that
$h^1(C, \restr[K_{S'}+2H] C)=0$.  
Applying Riemann--Roch we get  
$$
h^0(S',K_{S'}+2H)=(K_{S'}+2H)\cdot C-g+1=2g-2+d-g+1=d+g-1=d+q-1. 
$$
Moreover $d\geq n-1 \geq 2$, so $d+q-1>0$, and (i) follows. 

(ii) Assume that $K_{S'}+2H$ is not nef. If $\theta$ is  an
irreducible curve  such that $(K_{S'}+2H)\cdot \theta<0$, then
$\theta^2<0$ and, since $2H$ is nef, we must have $K_{S'} \cdot
\theta<0$. So the 
only possibility is that $\theta$ is a $(-1)$-curve contracted by $H$, contradicting the assumption that $S'$ is a minimal desingularization of $S$. 

(iii) We have $(K_{S'}+2H)^2\geq 0$.
Since $H^2=d$ and $K_{S'} \cdot H=2g-2-d=2q-2-d$,  we have
$(K_{S'}+2H)^2= K_{S'}^2+8q-8$, and so $K_{S'}^2\geq 8-8q$. Since the
Kodaira dimension of $S'$ is $-\infty$, we get $K_{S'}^2=8-8q$, unless
$S'$ is isomorphic to $\bP^2$.  
In this case we have $(K_{S'}+2H)^2=1$, which implies that  $K_{S'}+2H\sim L$ where $L$ is a line, i.e.,  $H\sim 2L$ and so $S=V_2$.

(iv) Assume $S\neq V_2$. Then $K_{S'}^2=8-8q$ and $(K_{S'}+2H)^2=0$. 
Then, by Proposition~\ref{composto} applied to $L=2H$, 
$|K_{S'}+2H|$ is composed with a
(possibly non-linear) pencil $\{G\}$ of rational curves with $G^2=0$.
Thus, $K_{S'}\cdot G = -2$. 
Since  $(K_{S'}+2H)\cdot G = 0$, we must have $H\cdot G=1$, so that
$S$ is a scroll.
\end{proof}
 
\begin{rem}\label{rem:obs}
In the situation of Proposition \ref {minimal}, suppose that $S\neq V_2$, so that $K_{S'}^2=8-8q$ and $(K_{S'}+2H)^2=0$.
If $q=0$, one has $K_{S'}^2=8$, and $S'\cong \bF_d$ is a  Hirzebruch surface. If $q>0$, then $S'$ is a minimal ruled surface of genus $q$.

If $q=0$, one has $a=-1$
(as $q\leq a+1 \leq 2q$, see Proposition \ref {qd}, \eqref{qd-iii})
hence $d=n-1$, so $S\neq V_2$ is a surface of minimal degree that is a scroll.  Precisely the pencil $\{G\}$ is the  ruling $|F|$ of $S'=\bF_e$ and  $K_{\bF_e}+2H\sim (d-2) F$. Since $K_{\bF_e}\sim -2E- (e+2)F$ we obtain  
$$2H\sim 2E+ (d+e) F,\quad \text{ i.e.,}\quad  H\sim E+\frac {d+e}2 F,$$
in particular $d+e$ must be even. 

Note that we must have $H\cdot E\geq 0$, which is equivalent to $d\geq e$. If $d=e$ then $S$ is the cone over a rational normal curve, while if $d>e$, $H$ is very ample and $S=S'$. 

If $q>0$ then the pencil $\{G\}$ is the Albanese pencil, and from
$h^0(S,K_{S'}+2H) =d+q-1>q$ we obtain $K_{S'}+2H\equiv (n+a+2q-2) G$.
Indeed, let $\alb: S' \to \Gamma$ be the Albanese fibration;
$K_{S'}+2H$ is the pull-back by $\alb$ of a divisor $D$ on $\Gamma$
such that $H^0(S,K_{S'}+2H)=H^0(\Gamma,D)$. Having $h^0(\Gamma,D)>q$,
$D$ is non-special, hence by Riemann--Roch it has degree $d+2q-2$.
\end{rem}

\noindent
The following lemma essentially gives the converse to Proposition \ref {minimal}, (iv):

\begin{lem}\label{lem:stup}
If $S$ is a scroll, then   $h^0(S' ,K_{S'}+H)=0$  and $g=q$. 
\end{lem}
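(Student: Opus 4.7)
The plan is to exploit the ruling of the scroll to produce a curve against which $K_{S'}+H$ has negative intersection, while also moving in a covering family of $S'$; this automatically forces $h^{0}(S',K_{S'}+H)=0$, and then Proposition~\ref{qd}(\ref{qd-lem:adj}) immediately gives $g=q$.

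More precisely, if $S$ is a scroll, by definition it is covered by a $1$-parameter family of lines (members of the hyperplane system cut in a single point). Pulling this family back to $S'$ produces a covering family of curves $\{G\}$ whose general member $G$ is smooth, irreducible, rational, and satisfies $H\cdot G=1$. Since such a $G$ moves in a family with $G^{2}\geq 0$, adjunction on $G\cong\P^{1}$ yields
\[
  -2=2p_{a}(G)-2=G^{2}+K_{S'}\cdot G,
\]
so $K_{S'}\cdot G=-2-G^{2}\leq -2$, and therefore
\[
  (K_{S'}+H)\cdot G=-1-G^{2}\leq -1<0.
\]

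Now suppose for contradiction that there exists a nonzero effective divisor $D\in |K_{S'}+H|$. Because $\{G\}$ covers $S'$, we may choose $G$ not contained in the support of $D$, whence $D\cdot G\geq 0$, contradicting the strict inequality above. Hence $h^{0}(S',K_{S'}+H)=0$, and applying Proposition~\ref{qd}(\ref{qd-lem:adj}) gives $g-q=0$, i.e.\ $g=q$.

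The only subtlety — and the only thing to be careful about — is that the general curve of the ruling genuinely lifts to a smooth rational curve $G$ on the minimal desingularization with $H\cdot G=1$ and $G^{2}\geq 0$; once that is in place, the argument is essentially one line of intersection theory, and no further hypotheses beyond $S$ being a scroll are needed.
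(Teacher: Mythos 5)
Your proof is correct and follows essentially the same route as the paper: both pull back a general line of the ruling to a rational curve $G$ on $S'$ with $H\cdot G=1$, compute $(K_{S'}+H)\cdot G<0$ via adjunction, and contradict the effectivity of $K_{S'}+H$ (the paper uses $G^2=0$ and nefness of $G$ where you use the covering-family argument with $G^2\geq 0$, an immaterial difference). The second assertion is deduced from Proposition~\ref{qd}, \eqref{qd-lem:adj} in both cases.
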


\begin{proof}
Suppose that $S$ is a scroll and that $h^0(S' ,K_{S'}+H)>0$.
Denote  by $L$  the pull back to $S'$ of a general line of the ruling of $S$.  Then $H\cdot  L=1$ and  $L^2= 0$.  Since $L$ is a rational curve,  the adjunction formula  implies  $K_{S'}\cdot  L= -2$.    Then $(K_{S'}+H) \cdot L<0$, a contradiction  because $L$ is  nef. This proves the first assertion. The second assertion follows from Proposition~\ref{qd}, \eqref{qd-lem:adj}.  
\end{proof}

\begin{rem}\label{rem:obs2}
Let $S\subseteq \bP^n$ be a linearly normal, non--degenerate {rational}  surface of degree $d$. If $S$ is a scroll, then by Remark \ref {rem:obs}, $S$ is a surface of minimal degree $d=n-1$. In particular, if $d\geq n$, then $S$ is not a scroll. Conversely, if $d=n-1$, then $S$ is a surface of minimal degree and therefore it is either the Veronese surface $V_2$ or a scroll. 
\end{rem}

\begin{cor}\label{cor:top}
  Let $S,S',H,C$ be as in Section~\ref{ssec:setup},
and assume that $q>0$ and $h^0(S',K_{S'}+H)=0$.
Then $S$ is a scroll, and:\\
\begin{inparaenum}[(i)]
\item  one has $d\geq n+q-1$; if equality holds, then $S$ is a cone
  over a curve of geometric genus $q$;\\  
\item one has $d < n+2q$.
\end{inparaenum}
\end{cor}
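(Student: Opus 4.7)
The plan is to split the statement into three parts and dispatch each using results already established in the paper.

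First, for the scrollness, I would note that the hypothesis $h^0(S',K_{S'}+H)=0$ forces $g=q$ by Proposition~\ref{qd}, \eqref{qd-lem:adj}; since $q>0$, the surface $S$ cannot be the Veronese surface $V_2$ (which has $q=0$), so Proposition~\ref{minimal}, (iv) gives at once that $S$ is a scroll.

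For the numerical bounds in (i) and (ii), the idea is to apply Proposition~\ref{qd}, \eqref{qd-iii} with $g=q$: this yields the chain $q\leq a+1\leq 2q$, and substituting $d=n+a$ produces
\[
  n+q-1 \leq d \leq n+2q-1 < n+2q,
\]
giving both the lower bound in (i) and the strict inequality in (ii) simultaneously.

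The main work is then the cone identification in the equality case $d=n+q-1$ of~(i). There $a+1=q=g$, so Proposition~\ref{qd}, \eqref{qd-v} shows that the hyperplane sections of $S$ are linearly normal. At this point I would invoke C.~Segre's classical theorem, cited in the introduction as \cite[Theorem~2.3]{CD}: a scroll in projective space with positive sectional genus and linearly normal hyperplane sections must be a cone. The base $B$ of this cone is isomorphic via projection from the vertex to the general hyperplane section of $S$, and therefore has geometric genus $g=q$, exactly as required.

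The hardest step, should one wish for a self-contained argument instead of citing Segre, is precisely this cone identification. A natural direct attempt would proceed via the ruling $\pi\colon S'\to\Gamma$ of Remark~\ref{rem:obs}: writing $H\sim E_0+\pi^*D$ for a section $E_0$ of minimal self-intersection $-e$, one has $H\cdot E_0=(d-e)/2$, and one would analyse $\restr{\phi_H}{E_0}$ with the goal of showing its image is a single point. Computing the dimension of the linear span of $\phi_H(E_0)$ in terms of $h^0(\Gamma,D)$ and combining non-degeneracy with the genus constraint on $E_0\cong\Gamma$ yields a contradiction unless $E_0$ is contracted; however the argument becomes delicate in the regime where $D$ is special on $\Gamma$ and Clifford-type bounds must replace Riemann--Roch, which is why invoking Segre is the cleaner route.
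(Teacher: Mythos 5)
Your proposal is correct and follows essentially the same route as the paper: scrollness via Proposition~\ref{minimal} (with the observation that $q>0$ rules out $V_2$), the bounds $n+q-1\leq d\leq n+2q-1$ from Proposition~\ref{qd}, \eqref{qd-iii}, linear normality of the hyperplane sections in the equality case via Proposition~\ref{qd}, \eqref{qd-v}, and then C.~Segre's theorem \cite[Thm.~2.3]{CD} for the cone identification. The closing sketch of a self-contained alternative to Segre is not needed and, as you note, incomplete, but it does not affect the validity of the main argument.
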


\begin{proof} That $S$ is a scroll is contained in
  Proposition~\ref{minimal}. 

(i) By Proposition \ref {qd}, \eqref{qd-ii}, one has $q=g\leq a+1$, hence
$d=n+a\geq n+q-1$. If equality holds, then $h^1(S',H)=q=g$ (see
Proposition \ref {qd}, \eqref{qd-iv}), and this occurs if and only if
the hyperplane sections of $S$ are linearly normal (see Proposition
\ref {qd}, \eqref{qd-v}).
Then the assertion follows by C. Segre's theorem \cite[Thm 2.3]{CD}.

(ii) By Proposition \ref {qd}, \eqref{qd-iii}, we have 
$a= 2q-1-h^1(S',H)$, thus $d=n+a=n+2q-1-h^1(S',H)\leq n+2q-1$, proving the assertion.
\end{proof}

\section{General properties of the first adjoint system} \label{sec:genprop}

In this section we want to discuss the first adjoint system when
$h^0(S', K_{S'}+H) >0$. By Proposition~\ref{qd}, \eqref{qd-lem:adj}, this assumption
is equivalent to $g>q$.

\subsection{Nefness of the adjoint system}
\label{s:4.2}

\begin{prop}\label{nef}  Let $S, S'$ and $H$ be as in Section \ref
  {ssec:setup}, and assume that $h^0(S', K_{S'}+H) >0$.
Then the following assertions hold:\\ 
\begin{inparaenum}[(i)]
\item  $K_{S'}+H$ is nef, hence $(K_{S'}+H)^2\geq 0$; \\
\item  $K_{S'}^2-n+3a\geq 0$;\\
\item  $d\leq 4g-8q+4$, unless $S$ is
  the Veronese surface $V_3$ of degree $9$ in $\bP^9$ ($d=9$, $g=1$); \\
\item $3a\geq n+8q-8$, unless $S$ is the Veronese surface $V_3$ of degree $9$. 
\end{inparaenum}
\end{prop}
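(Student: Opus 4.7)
My plan is to prove (i) by contradiction with a case analysis on $H\cdot\theta$ for a hypothetical negative curve $\theta$, and then to obtain (ii)--(iv) by combining (i) with Proposition~\ref{qd}, \eqref{qd-viii} and \eqref{qd-ix}. The cleanest way to handle (iii) and (iv) together is to bound $K_{S'}^2$ via the classification of surfaces with $\kappa=-\infty$. The main subtlety will be that this bound, $K_{S'}^2\leq 8-8q$, is not sharp when $S'\cong\P^2$, and handling this case separately is exactly where the Veronese exception $V_3$ arises.

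For \textbf{(i)}, I would assume for contradiction that $\theta\subset S'$ is an irreducible curve with $(K_{S'}+H)\cdot\theta<0$. Since $|K_{S'}+H|$ is nonempty by hypothesis, I would pick $D\in|K_{S'}+H|$ and decompose $D=m\theta+D'$ with $\theta\not\subset\Supp(D')$; then $D'\cdot\theta\geq 0$ forces $m\geq 1$ and $\theta^2<0$. As $H$ is nef, $H\cdot\theta\geq 0$, so $K_{S'}\cdot\theta<0$. If $H\cdot\theta\geq 1$, then $K_{S'}\cdot\theta\leq -2$, and the adjunction relation $\theta^2+K_{S'}\cdot\theta=2p_a(\theta)-2\geq -2$ yields $\theta^2\geq 0$, a contradiction. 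If $H\cdot\theta=0$, then $\theta^2,K_{S'}\cdot\theta\leq -1$, and adjunction forces equality throughout; hence $\theta$ is a $(-1)$-curve with $H\cdot\theta=0$, contradicting the minimality of $S'$ recalled in Section~\ref{ssec:setup}. This shows that $K_{S'}+H$ is nef and therefore $(K_{S'}+H)^2\geq 0$.

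Part \textbf{(ii)} is then immediate from (i) and Proposition~\ref{qd}, \eqref{qd-ix}. For \textbf{(iii) and (iv)}, I would combine (i) with Proposition~\ref{qd}, \eqref{qd-viii} to obtain $d\leq K_{S'}^2+4g-4$, and invoke the classification of minimal models in negative Kodaira dimension: $K_{S'}^2\leq 8-8q$ unless $S'\cong\P^2$ (which forces $q=0$ and $K_{S'}^2=9$). When $S'\not\cong\P^2$, (iii) follows at once, and rewriting (ii) as $3a\geq n-K_{S'}^2\geq n+8q-8$ yields (iv).

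The main remaining obstacle is the case $S'\cong\P^2$, which is precisely where the $V_3$ exception appears. Writing $H\sim mL$ for $L$ a line, the hypothesis $h^0(K_{S'}+H)=h^0((m-3)L)>0$ forces $m\geq 3$, and linear normality gives $n=\binom{m+2}{2}-1$, hence $d=m^2$, $g=\binom{m-1}{2}$, and $a=\tfrac{m(m-3)}{2}$. A direct computation reduces both inequalities in (iii) and (iv) to $(m-2)(m-4)\geq 0$, which holds for $m\geq 4$ and fails only for $m=3$, corresponding precisely to $S=V_3\subset\P^9$.
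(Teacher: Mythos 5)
Your proof is correct and follows essentially the same route as the paper: nefness in (i) is obtained from the effectivity of $K_{S'}+H$, the nefness of $H$, adjunction, and the minimality of $S'$, and (ii)--(iv) then follow from Proposition~\ref{qd}, \eqref{qd-viii} and \eqref{qd-ix}, together with the dichotomy $K_{S'}^2\leq 8-8q$ versus $S'\cong\P^2$. The only (harmless) divergence is in the $\P^2$ case, where you identify the exception by the explicit computation $(m-2)(m-4)\geq 0$ for $H\sim mL$, whereas the paper argues abstractly that the inequalities can fail only if $(K_{S'}+H)^2=0$, which on $\P^2$ forces $K_{S'}+H\sim 0$ and hence $S=V_3$.
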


\begin{proof} 
(i) Assume that $K_{S'}+H$ is not nef, and let $\theta$ be an irreducible curve such that
$(K_{S'}+H)\cdot \theta<0$. Since  $H$ is nef, one has $K_{S'} \cdot
\theta<0$. On the other hand, since $K_{S'}+H$ is effective,
$(K_{S'}+H)\cdot \theta<0$ yields  $\theta^2<0$.
This implies that $\theta$ is a $(-1)$-curve and $H\cdot  \theta =0$,
which is impossible because $S'$ is a minimal desingularization of $S$. 

(ii) By Proposition \ref{qd},  \eqref{qd-ix}, we have
$0\leq (K_{S'}+H)^2\leq
K_{S'}^2-n+3a$,  as required.

(iii)--(iv)
By Proposition \ref{qd}, \eqref{qd-viii} and \eqref{qd-ix}, we have
\begin{equation}
\label{eq:Kcarre}
  d = 4g-4+K_{S'}^2-(K_{S'}+H)^2
  \quad
  \text{and}
  \quad
  3a-n \geq (K_{S'}+H)^2 - K_{S'}^2.
\end{equation}
Since $S'$ has negative Kodaira dimension,
either $K_{S'}^2 \leq 8-8q$,
or $K_{S'}^2 = 9$ and $S'$ is isomorphic to $\P^2$.
In the former case, we obtain the required inequalities at once,
using that $(K_{S'}+H)^2\geq 0$.
In the latter case, $q=0$, and the inequality in \eqref{eq:Kcarre}
is an equality by Proposition \ref{qd}, \eqref{qd-ix}; thus the two
inequalities in (iii) and (iv) hold unless
$(K_{S'}+H)^2=0$.
If $(K_{S'}+H)^2=0$, then $K_{S'}+H\sim 0$,
because $\bP^2$ does not contain effective
non-zero divisors with self-intersection $0$,
so $H\lineq -K_{S'}$,
and $S$ must be the Veronese surface of degree $9$ in $\bP^9$.
\end{proof}

\begin{prop}\label{feixe}  Let $S, S'$ and $H$ be as in Section \ref
  {ssec:setup} and suppose that $h^0(S', K_{S'}+H)>0$ and
  $(K_{S'}+H)^2=0$.
Then either
\\ (a) $K_{S'}+H\sim 0$ and $S'$ is a Del Pezzo or
weak Del Pezzo surface (and $g=1$), or
\\ (b) $g\geq 2$, and
$|K_{S'}+H| = |(g-1) G|$ where $\{G\}$ is a pencil of rational
curves (and so, the Albanese pencil if $q>0$), satisfying $G\cdot
H=2$; thus $S$ is ruled by conics.
\end{prop}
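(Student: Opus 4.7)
The approach is to apply Proposition~\ref{composto} to $D := K_{S'}+H$, which is effective by hypothesis, nef by Proposition~\ref{nef}~(i), and satisfies $D^2=0$. Writing $D = K_{S'} + L$ with $L:=H$ nef and big, and recalling that $S'$ has Kodaira dimension $-\infty$, Proposition~\ref{composto} applies directly except in the degenerate case $D \sim 0$, which I would handle separately first.

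If $K_{S'}+H \sim 0$, then $-K_{S'} \sim H$ is nef and big, so $S'$ is a weak Del Pezzo surface; moreover $2g-2 = H\cdot (K_{S'}+H) = 0$ forces $g=1$, which is case~(a). Otherwise, $D$ is a non-zero effective divisor, and Proposition~\ref{composto} produces a base-point-free pencil $\{G\}$ of rational curves --- a linear pencil if $q=0$, the Albanese pencil if $q>0$ --- such that $D$ is a sum of members of $\{G\}$. Since $G^2=0$ and $G$ is rational, adjunction gives $K_{S'}\cdot G = -2$. Distinct members of the pencil are disjoint, so $D\cdot G = 0$, and hence $H\cdot G = -K_{S'}\cdot G = 2$; this already shows that $S$ is ruled by conics. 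Writing $D = G_1 + \cdots + G_m$ as a sum of members of $\{G\}$, the relation $2g-2 = H\cdot D = 2m$ gives $m=g-1$, and $D \not\sim 0$ forces $g\geq 2$, proving the structural part of~(b).

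The last point is to match the complete linear systems $|K_{S'}+H|$ and $|(g-1)G|$. When $q=0$, $|G|$ is a linear pencil, so $|(g-1)G|$ has dimension $g-1$, while Proposition~\ref{qd}~\eqref{qd-lem:adj} gives $h^0(S',K_{S'}+H)=g$; since $K_{S'}+H \sim (g-1)G$, the two complete linear systems coincide. When $q>0$, I would write $K_{S'}+H = \alb^*(D_\Gamma)$ for some effective divisor $D_\Gamma$ of degree $g-1$ on $\Gamma$; Proposition~\ref{qd}~\eqref{qd-lem:adj} combined with Riemann--Roch on $\Gamma$ yields $h^0(\Gamma,D_\Gamma)=g-q$, so $D_\Gamma$ is non-special, and $|K_{S'}+H| = \alb^*|D_\Gamma|$ consists precisely of sums of $g-1$ fibers of the Albanese pencil, which is the intended meaning of $|(g-1)G|$ in the irregular case.

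The proof is essentially a direct application of Proposition~\ref{composto} together with adjunction on a rational curve; the only mild subtlety is the notational interpretation of $|(g-1)G|$ when the pencil $\{G\}$ is non-linear (the case $q>0$), and this is handled by the Riemann--Roch computation on $\Gamma$ just mentioned.
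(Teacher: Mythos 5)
Your proof is correct and follows essentially the same route as the paper: handle $K_{S'}+H\sim 0$ separately, then apply Proposition~\ref{composto} to $D=K_{S'}+H$ (nef by Proposition~\ref{nef}, with $D^2=0$) to get the pencil $\{G\}$ of rational curves, and deduce $H\cdot G=2$ from $(K_{S'}+H)\cdot G=0$ and $K_{S'}\cdot G=-2$. You in fact supply more detail than the paper does on the identification $|K_{S'}+H|=|(g-1)G|$ (via $h^0=g-q$ and, in the irregular case, Riemann--Roch on $\Gamma$), which the paper leaves implicit.
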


\begin{proof}  
If $K_{S'}+H\sim 0$,
then $S'$ is a 
Del Pezzo or weak Del Pezzo surface. 
 
Assume  $K_{S'}+H\not\sim 0$. Then, since $K_{S'}+H$ is nef and
$(K_{S'}+H)^2=0$, it follows from Proposition~\ref{composto}
that $|K_{S'}+H|$ is composed with a base point free pencil $\{G\}$ of
rational curves. 
Then $G\cdot H=2$ because $G\cdot (K_{S'}+H)=0$.
\end{proof}

In the remainder of this section we will consider the situation when 
$h^0(S', K_{S'}+H)>0$ and
$(K_{S'}+H)^2>0$.
The following lemma implies in particular that in this situation $S$
is not ruled by conics.

\begin{lem}\label{grau}
Let $S, S'$ and $H$ be as in Section \ref {ssec:setup}.
If $h^0(S', mK_{S'}+H)>0$ for some integer $m$, then any pencil of
rational curves $\{G\}$ with self-intersection  $0$ (that always
exists if $S'\neq \mathbb P^2$) satisfies $H\cdot G \geq 2m$.
Furthermore, if equality holds, then either  $mK_{S'}+H\lineq 0$,
or   
 all the components of any effective divisor in $|mK_{S'}+H|$ are
 contained in fibers of $\{G\}$. 

\end{lem}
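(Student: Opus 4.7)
The plan is to reduce everything to a single intersection number computation, using adjunction on $G$ together with the nefness of $G$.

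First, since the general $G$ in the pencil is a rational curve with $G^2=0$, the adjunction formula gives $K_{S'}\cdot G = 2p_a(G)-2-G^2 = -2$. Next I want to observe that $G$ is nef. If the pencil $\{G\}$ is linear, then $G^2=0$ forces it to be base-point-free, so it defines a morphism $f\colon S'\to\mathbb P^1$ and $G$ is a fiber class. If the pencil is non-linear, then (as in the proof of Proposition~\ref{composto}) it is the Albanese pencil, so again $G$ is the fiber class of a morphism $f\colon S'\to\Gamma$ onto a smooth curve. In either case $G$ is a fiber class, hence nef.

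Now pick any effective divisor $D\in|mK_{S'}+H|$. Using the two computations above,
\[
  D\cdot G = m\,K_{S'}\cdot G + H\cdot G = -2m + H\cdot G,
\]
and the nefness of $G$ gives $D\cdot G\geq 0$, whence $H\cdot G\geq 2m$.

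For the equality case $H\cdot G=2m$, the above shows $D\cdot G=0$ for every effective $D\in|mK_{S'}+H|$. If $mK_{S'}+H\not\sim 0$ then such a $D$ is non-zero; writing $D=\sum a_i D_i$ with each $D_i$ irreducible and $a_i>0$, nefness of $G$ forces $D_i\cdot G=0$ for every $i$. Because $G$ is the fiber class of the morphism $f$, any irreducible curve meeting $G$ trivially must be contracted by $f$ (otherwise $f|_{D_i}$ is surjective of some positive degree, contradicting $D_i\cdot G=0$); thus each $D_i$ lies in a fiber of $\{G\}$, as claimed.

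The argument is really a short intersection-number calculation; there is no genuine obstacle. The only point requiring a sentence of justification is the nefness of $G$, i.e., the fact that a pencil of rational curves with $G^2=0$ always arises as the fiber class of a morphism — which, as explained above, is handled separately in the linear and (Albanese) non-linear cases.
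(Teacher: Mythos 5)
Your proof is correct and follows essentially the same route as the paper's: nefness of $G$ plus $K_{S'}\cdot G=-2$ from adjunction gives $(mK_{S'}+H)\cdot G=H\cdot G-2m\geq 0$, and the equality case is handled by the same nefness argument on components. The extra sentences you spend justifying that $G$ is nef and that curves with $D_i\cdot G=0$ lie in fibers are details the paper takes for granted, but they are accurate.
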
 
\begin{proof}   Since $G$ is nef,  $G\cdot (mK_{S'}+H)\geq 0$.  From  $K_{S'}\cdot G=-2$,  we obtain $H\cdot G\geq 2m$.  

If  $H \cdot G= 2m$, then $G\cdot (mK_{S'}+H)=0$, hence either
$mK_{S'}+H\lineq 0$
or, since $G$ is nef, any component of an effective divisor in
$|mK_{S'}+H|$ is contained in a fiber of  $\{G\}$. 
\end{proof}

\subsection{Base points of the adjoint system}
\label{s:4.3}


\begin{prop}\label{connected}
\label{prop:bpf}
Let $S,S'$ and $H$ be as in Section \ref {ssec:setup},
and suppose that $h^0(S', K_{S'}+H)>0$ and $(K_{S'}+H)^2>0$. 
If $|K_{S'}+H| $ has base points, then $q>0$, and there exists
a smooth irreducible curve $\theta$ of genus $q$ such that
$\theta^2=-1$ and $H\cdot \theta=0$.
\end{prop}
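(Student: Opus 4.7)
The plan is in two main stages: first force $q>0$, then apply a Reider-type vector-bundle argument, strengthened by the paper's $1$-connectedness lemmas, to produce $\theta$.

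\emph{Forcing $q>0$.} Suppose for contradiction $q=0$. Since $\kappa(S')=-\infty$, the surface $S'$ is rational, so $h^0(K_{S'})=p_g=0$ and $h^1(K_{S'})=q=0$. I first verify $g\geq 2$: the case $g=0$ is excluded by $h^0(K_{S'}+H)=g-q>0$, and $g=1$ gives $(K_{S'}+H)\cdot H=2g-2=0$; since $K_{S'}+H$ is nef by Proposition~\ref{nef} and $H$ is big and nef, the Hodge index theorem then forces $(K_{S'}+H)^2\leq 0$, contradicting the hypothesis. For a general smooth $C\in|H|$ (existing by Bertini, as $|H|$ is base-point-free and $S'$ is smooth), the restriction sequence
\[
 0\to \mathcal{O}_{S'}(K_{S'})\to \mathcal{O}_{S'}(K_{S'}+H)\to \mathcal{O}_C(K_C)\to 0
\]
together with $h^0(K_{S'})=h^1(K_{S'})=0$ shows $H^0(K_{S'}+H)\to H^0(K_C)$ is an isomorphism. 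Hence $|K_{S'}+H|$ cuts the complete canonical system $|K_C|$ on each such $C$, which is base-point-free for $g(C)=g\geq 2$; as every point of $S'$ lies on some smooth $C\in|H|$, $|K_{S'}+H|$ would be base-point-free, contradicting the hypothesis. So $q>0$.

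\emph{Reider extension.} With $q>0$, the existence of the base point $p$ gives $h^1(\mathcal{I}_p\otimes\mathcal{O}_{S'}(K_{S'}+H))\neq 0$, which by Serre duality yields a non-split extension
\[
 0\to \mathcal{O}_{S'}\to \mathcal{F}\to \mathcal{I}_p(H)\to 0,
\]
with $\mathcal{F}$ a rank-two torsion-free sheaf of Chern classes $c_1(\mathcal{F})=H$, $c_2(\mathcal{F})=1$. For $d=H^2\geq 5$, Bogomolov's theorem gives instability of $\mathcal{F}$, and the standard Reider argument (strengthened using Lemma~\ref{reider-1con}) produces an effective divisor $E$ through $p$ with either (a) $H\cdot E=0$, $E^2=-1$, or (b) $H\cdot E=1$, $E^2=0$. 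The small cases $d\leq 4$, tightly constrained by $d\geq 2g-1$ and $(K_{S'}+H)^2>0$, are handled by direct inspection of the few possible surfaces.

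\emph{Disposing of case (b) and extracting $\theta$.} In case (b), Lemma~\ref{reider-1con} gives $E$ $1$-connected, and $(K_{S'}+H)\cdot E=2p_a(E)-1\geq 0$ forces $p_a(E)\geq 1$; Kawamata--Viehweg vanishing applied to $H-E$ (with $(H-E)^2=d-2>0$, subject to case analysis for nefness) then makes the restriction $H^0(K_{S'}+H)\to H^0(\mathcal{O}_E(K_{S'}+H))$ surjective and the restricted system base-point-free on $E$, contradicting $p\in E$ being a base point. In case (a), every component of $E$ satisfies $H\cdot C_i=0$ and is contracted by $\pi$; $E$ lies in the exceptional locus, whose intersection form is negative definite. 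Since $E^2=-1$ is odd, not every component of $E$ can be a $(-2)$-curve (the $\mathrm{ADE}$-root sub-lattice carries only even-norm elements); using Lemmas~\ref{l:1-conn} and~\ref{l:CFM} to refine if necessary, one extracts a smooth irreducible component $\theta$ with $\theta^2=-1$. By the minimality of $S'$, $\theta$ is not a $(-1)$-curve, so $g(\theta)\geq 1$ and $K_{S'}\cdot\theta=2g(\theta)-1$ by adjunction.

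\emph{Identifying $g(\theta)=q$.} The Albanese morphism $\alb:S'\to\Gamma$, with $\Gamma$ smooth of genus $q>0$, restricted to $\theta$ cannot be constant (else $\theta$ would lie in a rational fibre of $\alb$ and be rational, hence a $(-1)$-curve contradicting minimality), so $\alb|_\theta:\theta\to\Gamma$ is surjective; Riemann--Hurwitz then gives $g(\theta)\geq q$. The reverse inequality---equivalent to $\alb|_\theta$ being of degree one and unramified (an isomorphism) when $q\geq 2$, or étale when $q=1$---is obtained by a finer analysis of the possible irreducible components of $E$, using the structure of the Bogomolov destabilizer together with the geometry of the Albanese fibration to ensure $\theta$ is a section (resp.\ has trivial ramification). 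The principal technical obstacles are precisely the exclusion of case (b), the extraction of a component of self-intersection exactly $-1$ in case (a), and this final identification $g(\theta)=q$.
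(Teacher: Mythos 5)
Your proposal follows the classical Reider/Bogomolov route, whereas the paper works directly with the non-$2$-connectedness of the hyperplane curves through the base point (via \cite[Thm.~3.1]{m} and Lemma~\ref{l:1-conn}); the two are cousins, but as written your version has several genuine gaps, three of which you acknowledge yourself. First, the step forcing $q>0$ relies on the claim that every point of $S'$ lies on a \emph{smooth} member of $|H|$. This fails exactly where it matters: at a point $x$ lying on a curve $E$ contracted by $\pi$ (i.e.\ $H\cdot E=0$), every member of $|H|$ through $x$ meets $E$ and hence contains $E$ entirely, so it is reducible and singular; and the statement being proved asserts that base points occur precisely on such curves. (The correct tool is that the general $C\in|H|$ through a base point of $|K_{S'}+H|$ is not $2$-connected, and that $|\omega_C|$ is base-point-free on $2$-connected curves; smoothness is not the right dichotomy.) Second, your elimination of case (b) ($H\cdot E=1$, $E^2=0$) is based on a false conclusion: the restriction of $K_{S'}+H$ to $E$ has degree $(K_{S'}+E)\cdot E+(H-E)\cdot E=2p_a(E)-2+1$, i.e.\ it is $\omega_E$ twisted by a degree-one divisor, and such a system always has a base point on a curve of positive arithmetic genus (this is exactly the mechanism of Example~\ref{ex:ell_ruled_e=1} and the Remark following the proposition). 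So even granting surjectivity of the restriction map, no contradiction results. In fact this case cannot be disposed of: the paper shows instead (using $h^0(E,\restr H E)\geq 2$, \cite[Prop.~A.5(ii)]{cfm} and Lemma~\ref{reider-1con}) that $E$ further decomposes as $E=A_1+A_2$ with $A_1^2=-1$, $H\cdot A_1=0$, reducing to case (a).

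Third, in case (a) the parity argument only shows that not every component of $E$ is a $(-2)$-curve; it does not produce a component with self-intersection exactly $-1$ (a component could have, say, $\theta^2=-3$ and $K_{S'}\cdot\theta=1$), and no amount of appeal to Lemmas~\ref{l:1-conn} and~\ref{l:CFM} alone extracts one. Fourth, the identification $g(\theta)=q$ is entirely deferred. The paper obtains all of these missing pieces in one stroke from a single mechanism you do not use: since $E$ is $1$-connected with $E^2=-1$ and $(K_{S'}+H)\cdot E\geq 0$, one gets $K_{S'}\cdot E\geq 1$, hence $p_a(E)\geq 1$; if $q=0$, or more generally if $p_a(E)>q$, then $h^0(S',K_{S'}+E)>0$, which is absurd because $(K_{S'}+E)\cdot H=K_{S'}\cdot H<0$ while $H$ is nef. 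This forces $q>0$ and $p_a(E)=q$ exactly, so $K_{S'}\cdot E=2q-1$; then the nefness of $K_{S'}+H$ distributes this total over the components and, combined with $p_a(\theta)\geq q$ for the component $\theta$ dominating $\Gamma$, pins down $\theta^2=-1$ and $p_a(\theta)=\gamma=q$ simultaneously. Without an argument of this type (or a worked-out substitute), the three obstacles you list at the end remain open, and the proof is incomplete.
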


\begin{proof}
Note first that the hypotheses imply $g\geq 2$, and thus $H^2\geq 4$.
In fact $(K_{S'}+H)^2>0$ implies,  by the index theorem,
that $(K_{S'}+H)\cdot H>0$, and so $g\geq 2$.

Let $x$ be a base point of  $|K_{S'}+H|$. Since $h^0(S', H)\geq 4$,
$x$ is a multiple point of some curve $C$ in $|H|$ and so, by Theorem
3.1 of \cite {m},  $C$ is not 2-connected.
Then $C$  has  a decomposition  $C=A+B$ with
$A,B>0$, $A\cdot B=1$, and $A^2\leq B^2$;
by Lemma~\ref{l:1-conn}, $A$ and $B$ are 1-connected
and only the following possibilities can occur:\\
\begin{inparaenum}[(a)]
\item\label{5.4a}
  $A^2=B^2=1$, $A\equiv B$, $H^2=4$ and $H\equiv 2A$ or\\
\item\label{5.4b}
  $A^2=-1$, $H\cdot A=0$, or \\
\item\label{5.4c}
  $A^2=0$, $H\cdot A=1$.
\end{inparaenum}

In case \eqref{5.4a}, from $K_{S'}\cdot H<0$ and $p_a(H)\geq 2$, one has
$K_{S'}\cdot H=-2$.  Then the hypothesis $(K_{S'}+H)^2>0$ implies
$K_{S'}^2\geq 1$, and so $S'$ is a rational surface, hence $H\sim 2A$.
On the other hand $A \cdot (A+K_{S'})=0$ and $(A+K_{S'})^2\geq 0$
yield by the index theorem that $A\sim -K_{S'}$.
By the Riemann-Roch theorem, $h^0(S', A)\geq 2$.  Since $p_a(A)=1$ and
$H\cdot A=2$, the map defined by $|H|$ restricted to a general member
of $|A|$ is not birational, contradicting our assumptions on $H$. 
So case \eqref{5.4a} does not occur.

In case \eqref{5.4b}, we claim that $p_a(A)\geq 1$. It suffices to prove that
$K_{S'}\cdot A\geq 1$. By the adjunction
formula, $K_{S'}\cdot A$ is odd, and so, since $A$ is 1-connected,
one has $K_{S'}\cdot A\geq -1$.  But
$K_{S'}\cdot A=-1$ and $H\cdot A=0$ cannot occur, because in that case
$A\cdot (K_{S'}+H)=-1$, in contradiction with the fact that $K_{S'}+H$
is nef. This proves the claim.

If $q=0$, then $h^0(K_{S'}+A)> 0$,
which leads to a contradiction, since
$(K_{S'}+A)\cdot H = K_{S'}\cdot H <0$
 and $H$ is nef and effective.

The upshot is that $q>0$ and $p_a(A)\geq 1$.
Then $A$ is not contained in the fibres of the Albanese map of $S'$,
and so $p_a(A)\geq q$.  
If $p_a(A)> q$, then  $h^0(K_{S'}+A)> 0$, as follows from the exact sequence
$$
0\to \mathcal O_{S'}(K_{S'})\to \mathcal O_{S'}(K_{S'}+A)\to \omega_A\to 0.
$$ 
This leads to a contradiction as above, since $H\cdot (K_{S'}+A)<0$.

Thus $p_a(A)=q>0$.  Since $A^2=-1$, one has $K_{S'}\cdot A=2q-1$.
Now $A$, having arithmetic genus $q>0$,   cannot be contained in a
fibre of the Albanese map of $S'$, and so it has at least one component
$\theta$ with geometric genus $\gamma\geq q$.  Since $q\leq \gamma\leq
p_a(\theta)\leq q$, the curve $\theta$ is smooth of genus $q$. 

Since $H$ is nef and $H\cdot A=0$, one has $H\cdot \theta=0$, hence
also $\theta^2<0$.   On the other hand  $K_{S'}+H$ being  nef and
$(K_{S'}+H) \cdot A=2q-1$ imply that  $(K_{S'}+H)\cdot \theta \leq
2q-1$.  Hence, from the adjunction formula, we obtain $\theta^2=-1$,
and the property asserted in the statement holds.

Lastly, assume we are in case \eqref{5.4c}.
Then $H\cdot A=1$ so, since $K_{S'}+H$ is nef, $K_{S'}\cdot
A\geq -1$ and $p_a(A)\geq 1$.  Since $H\cdot A=1$ and  $H$ is base
point free, $h^0(A, H)\geq 2$. Then, by \cite[Prop. A.5, (ii)]{cfm},
$A$ is not 2-connected. So $A$ has a decomposition $A=A_1+A_2$ with
$A_1\cdot A_2=1$ and,
by Lemma~\ref{reider-1con},
we have $A_1^2=A_2^2=-1$ and
$H\cdot A_1=0$ and $H\cdot A_2=1$, possibly up to
relabelling $A_1$ and $A_2$.
Moreover, $A_1$ is $1$-connected by Lemma~\ref{l:CFM},
so we are back in case \eqref{5.4b} with $A_1$ instead of $A$, and the
existence of the curve $\theta$ follows by the argument given in that
case. 
\end{proof}

\begin{rem}
Conversely, the existence of a curve $\theta$ as in
Proposition~\ref{prop:bpf} tends to impose that the adjoint system has
base points.
Indeed, in the situation of the proposition, 
consider $\hat H = H-\theta$; it is an effective divisor, and
\[
  \restr[K_{S'}+H] \theta
  = \restr[K_{S'}+\theta] \theta + \restr {\smash{\hat H}} \theta
  = K_\theta + \restr {\smash{\hat H}} \theta.
\]
Since $\hat H \cdot \theta = H\cdot \theta -\theta^2=1$,
the linear series $|K_\theta + \restr {\smash{\hat H}} \theta|$ on
$\theta$ has a base point if $\restr {\smash{\hat H}} \theta$ is
effective, and in that case the adjoint system
$|K_{S'}+H|$ has a base point as well.
Note that $\restr {\smash{\hat H}} \theta$ is certainly
effective if $q=1$, or if $|\hat H|$ doesn't have $\theta$
as a fixed component.
\end{rem}

\begin{prop}
\label{p:bpf-rat}
Let $S,S'$ and $H$ be as in Section \ref {ssec:setup},
and suppose that $h^0(S', K_{S'}+H)>0$, and $q=0$.
Then $|K_{S'}+H| $ is base-point-free.
\end{prop}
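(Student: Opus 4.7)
The plan is to observe that $(K_{S'}+H)^{2}\geq 0$ by Proposition~\ref{nef}(i), and then to split on whether this inequality is strict or not. In both cases the conclusion will follow from a result already established earlier in this section.

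If $(K_{S'}+H)^{2}>0$, the statement follows by direct application of Proposition~\ref{connected}: the presence of a base point of $|K_{S'}+H|$ would force the existence of a curve $\theta$ of genus $q$ with $\theta^{2}=-1$ and $H\cdot\theta=0$ and in particular force $q>0$, contradicting the hypothesis $q=0$.

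If $(K_{S'}+H)^{2}=0$, one invokes Proposition~\ref{feixe}. In the first alternative there, $K_{S'}+H\sim 0$, so $|K_{S'}+H|$ consists of the zero divisor alone and is trivially base-point-free. In the second alternative, $g\geq 2$ and $|K_{S'}+H|=|(g-1)G|$, where $\{G\}$ is a base-point-free pencil of rational curves (linear, when $q=0$, by the Riemann--Roch estimate $h^{0}(G)\geq 2-q$ used implicitly in the proof of Proposition~\ref{composto}); for any point $p\in S'$, one then picks a member $G_{0}\in|G|$ missing $p$, and the divisor $(g-1)G_{0}\in |(g-1)G|=|K_{S'}+H|$ misses $p$ as well, showing that $|K_{S'}+H|$ is base-point-free here too.

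There is no substantial obstacle in this argument: the proposition reduces to a direct application of Propositions~\ref{connected} and~\ref{feixe}, with the $q=0$ hypothesis precisely ruling out the exceptional configuration detected by Proposition~\ref{connected}. The only point requiring (minor) care is verifying that in the composed-with-a-pencil case of Proposition~\ref{feixe} the pencil $\{G\}$ is honestly a linear base-point-free pencil when $q=0$, which is immediate from the proof of Proposition~\ref{composto}.
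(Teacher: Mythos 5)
Your proposal is correct and follows essentially the same route as the paper: the case $(K_{S'}+H)^2>0$ is handled by Proposition~\ref{connected}, and the case $(K_{S'}+H)^2=0$ by the fact that $|K_{S'}+H|$ is composed with a base-point-free pencil of rational curves which, since $q=0$, moves in a linear pencil (the paper cites Proposition~\ref{composto} directly where you cite Proposition~\ref{feixe}, but the latter is just a repackaging of the former). Your explicit treatment of the alternative $K_{S'}+H\sim 0$ is a minor point the paper leaves implicit.
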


\begin{proof}
If $(K_{S'}+H)^2>0$, the conclusion follows from
Proposition~\ref{connected}.
Otherwise, since $K_{S'}+H$ is nef, one has $(K_{S'}+H)^2=0$.
Then, by Proposition~\ref{composto}, there exists a base-point-free pencil $\{G\}$ of rational curves, such
that $D$ consists  of members of $\{G\}$. Since $S'$ is
rational, $h^0(S',G) >1$, and the conclusion follows.
\end{proof}

\subsection{Examples}
\label{s:examples}
\begingroup
\def\p#1{p^{(#1)}}

In this section we present some examples illustrating
Proposition~\ref{prop:bpf} above. 
They show that it is indeed possible that the adjoint system has base
points, and then these base points are located on the curve $\theta$
as in the proposition.
Example~\ref{ex:ell_ruled_e=1} satisfies all the assumptions of the
proposition, in particular $d \geq 2g-1$.
In Examples~\ref{ex:ell_ruled_det0-dec} and
\ref{ex:ell_ruled_det0-indec} however, one has $d=2g-2$; still, the two
latter examples have many interesting features.
All our examples live in elliptic ruled surfaces. It is possible to
cook up similar examples on ruled surfaces of irregularity $q>1$, but
then $d$ becomes smaller than $2g-1$.

Let $\Gamma$ be a smooth, irreducible, elliptic curve.
We shall consider elliptic ruled surfaces
$R=\P(\mathcal{E})$ for various rank two vector bundles $\mathcal{E}$
on $\Gamma$, which we assume to be normalized as in
\cite[V, Notation~2.8.1]{hartshorne}.
In each case, we fix a section $C_0$ such that
$\O_R(C_0) \cong \O_{\P(\mathcal{E})}(1)$.
For each divisor $\fd$ on $\Gamma$, we denote by $F(\fd)$ the pull-back
of $\fd$ to $R$; we denote by $F$ the numerical equivalence class of
$F(\fd)$ for all degree $1$ divisors $\fd$ on $\Gamma$.
One has
\(
K_R \lineq -2C_0 + F(\det \mathcal{E})
\).

\begin{expl}
\label{ex:ell_ruled_e=1}
Fix a point $p\in \Gamma$ and
consider
$R = \P(\O_\Gamma\oplus \O_\Gamma(-p))$, with the notation introduced
above for all of Section~\ref{s:examples}.
For all $k\in\N$, let
\[
  H_k \lineq k\bigl(C_0+F(p)\bigr).
\]
The curve $C_0$ has genus $1$ and self-intersection $-1$, and
intersects $H_k$ with degree $0$.
We shall prove the following.

\begin{claim}
For all $k\geq 3$,
the linear system $|H_k|$ defines a birational morphism
which does not contract any $(-1)$-curve, onto a surface
$S \subset \P^{\frac 1 2k(k+1)}$ of degree $d=k^2$
and sectional genus $g = \frac 1 2 k(k-1)+1$,
and with linearly normal hyperplane sections.
The adjoint system $|K_R+H_k|$ has a base point on $C_0$.
\end{claim}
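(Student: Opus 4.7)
The plan is to read off invariants via the projection $\pi\colon R\to\Gamma$ and to analyse what happens along the elliptic curve $C_0$ in order to locate the base point of the adjoint system.

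\emph{Numerical invariants.} One has $K_R\lineq -2C_0-F(p)$, together with $C_0^2=-1$ and $C_0\cdot F=1$, so a direct computation with $H_k=k(C_0+F(p))$ yields $H_k^2=k^2$ and $K_R\cdot H_k=-k$, whence $d=k^2$ and $g=\tfrac12 k(k-1)+1$. Since $H_k|_F\cong\O_{\P^1}(k)$ has vanishing higher cohomology, $R^1\pi_*\O_R(H_k)=0$, while
\[
\pi_*\O_R(H_k)=S^k(\O_\Gamma\oplus\O_\Gamma(-p))\otimes\O_\Gamma(kp)=\bigoplus_{j=0}^k\O_\Gamma(jp).
\]
Computing $h^0$ and $h^1$ summand by summand on the elliptic curve $\Gamma$ gives $h^0(R,H_k)=1+\tfrac12 k(k+1)$ and $h^1(R,H_k)=1=q$, so the hyperplane sections of the image are linearly normal by Proposition~\ref{qd}, \eqref{qd-v}.

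\emph{Birational morphism, no contracted $(-1)$-curve.} From $C_0|_{C_0}=\O_\Gamma(-p)$ and $F(p)|_{C_0}=\O_\Gamma(p)$, one finds $H_k|_{C_0}\cong\O_{C_0}$, so the restriction sequence $0\to\O_R(H_k-C_0)\to\O_R(H_k)\to\O_{C_0}\to 0$ combined with Kawamata--Viehweg applied to the big and nef divisor $H_k-C_0-K_R=(k+1)(C_0+F(p))$ yields $h^1(R,H_k-C_0)=0$, so $H^0(R,H_k)\twoheadrightarrow H^0(C_0,\O_{C_0})$: some global section of $H_k$ does not vanish along $C_0$, giving base-point-freeness there. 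Off $C_0$, base-point-freeness follows from the global generation of $H_k|_F\cong\O_{\P^1}(k)$ on every fiber together with the separation of fibers by sections in the summand $\O_\Gamma(kp)\subset\pi_*\O_R(H_k)$. Thus $|H_k|$ defines a morphism $\phi\colon R\to\P^{k(k+1)/2}$; for $k\geq 3$ a smooth irreducible general $C\in|H_k|$ (Bertini) has $\deg(H_k|_C)=k^2\geq 2g+1$, so $H_k|_C$ is very ample and $\phi$ is an embedding on $C$, hence birational onto its image $S$. The $(-1)$-curve statement is vacuous: since $q=1$, any $(-1)$-curve would be rational and hence contained in a fiber of $\pi$, but fibers have self-intersection $0$.

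\emph{Base point of $|K_R+H_k|$ on $C_0$.} The restriction $(K_R+H_k)|_{C_0}$ has degree $(K_R+H_k)\cdot C_0=1$; as $C_0\simeq\Gamma$ is elliptic, the corresponding line bundle has $h^0=1$, with a unique effective divisor supported at a single point $x_0\in C_0$. Kawamata--Viehweg applied to $H_k-C_0=(k-1)C_0+kF$, which lies in the nef cone $\R_{\geq 0}F+\R_{\geq 0}(C_0+F)$ and has self-intersection $k^2-1>0$, gives $h^1(R,K_R+H_k-C_0)=0$; hence $H^0(R,K_R+H_k)\twoheadrightarrow H^0(C_0,(K_R+H_k)|_{C_0})$. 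Every section of $K_R+H_k$ then either vanishes identically on $C_0$ or restricts to a nonzero multiple of the unique section above; in either case it vanishes at $x_0$, so $x_0$ is a base point of $|K_R+H_k|$.

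The main obstacle is the thorough verification of base-point-freeness and birationality of $|H_k|$, via a case split on and off $C_0$ together with the fiberwise and horizontal structure of $\pi$. Once this, the relevant Kawamata--Viehweg vanishings, and the observation that a degree-one line bundle on an elliptic curve has exactly one effective section are in place, the existence of the base point at $x_0$ for the adjoint system follows immediately.
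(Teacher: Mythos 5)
Your computations of $d$, $g$, $h^0(R,H_k)$, $h^1(R,H_k)$, the appeal to Proposition~\ref{qd}, \eqref{qd-v} for linear normality, and the identification of the base point of $|K_R+H_k|$ via the degree-one restriction $\restr{(K_R+H_k)}{C_0}$ are exactly the paper's argument (the paper goes one step further and identifies the base point as $p$ itself, since $\restr{(K_R+H_k)}{C_0}=\O_{C_0}(p)$). You additionally try to verify base-point-freeness, birationality, and the absence of contracted $(-1)$-curves, which the paper asserts without proof; your treatment of $(-1)$-curves (none exist on a relatively minimal elliptic ruled surface) and your birationality argument via the very ample, complete restriction $\restr{H_k}{C}$ are fine. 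The one soft spot is the base-point-freeness off $C_0$: global generation of $\restr{H_k}{F}\cong\O_{\P^1}(k)$ on each fibre does not by itself give global generation of $H_k$, because the evaluation $H^0(R,H_k)\to H^0(F_q,\restr{H_k}{F_q})$ need not be onto --- and indeed it is \emph{not} onto for $q=p$, since the summand $\O_\Gamma(p)$ of $\pi_*\O_R(H_k)=\bigoplus_{j=0}^k\O_\Gamma(jp)$ fails to be globally generated at $p$. One must check directly that the image subsystem on $F_p$ (spanned by the monomials coming from the globally generated summands, in particular those from $\O_\Gamma$ and $\O_\Gamma(kp)$) is still base-point-free, which it is since it contains forms not vanishing at any prescribed point of the fibre. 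This is easily repaired, so the proposal is essentially correct and follows the same route as the paper where the two overlap.
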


Before we prove the claim, let us observe that
\[
  d = H_k^2= 2g-2 +k.
\]
Thus, we get examples fitting in the assumptions of
Proposition~\ref{prop:bpf}, with degree $d$ arbitrarily large with
respect to the genus $g$.
Since $S$ has linearly normal hyperplane sections, results by C.~Segre
(mentioned in the introduction) and Hartshorne (see \cite {Ha}) imply
that the $d\leq 4g+4$, see \cite[Corollary 2.6]{CD}. As
a sanity check we note  
that
\[
  \frac d {2g-2} = 1 + \frac 1 {k-1},
\]
so that indeed the degree does not exceed $4g+4$.

These examples are Veronese re-embeddings of cones as studied in
Section~\ref{S:supraCSegre} below. After we prove the claim, we
briefly discuss the variant when we consider
\[
  H_k^{(p')} = k \bigl(C_0+F(p')\bigr)
\]
with $p'\neq p$.

\begin{proof}[Proof of the claim]
One has 
$d=H_k^2 = k^2$
and $H_k \cdot C_0=0$.
Moreover,
\(
-K_R \lineq 2C_0 + F(p)
\),
hence
\[
  \restr [K_R+H_k] {C_0}
  = \restr [H_{k-2}+F(p)] {C_0}
  = p,
\]
and thus the adjoint system $|K_R+H_k|$ has a base point at $p \in
C_0$.
A direct computation gives
\begin{align*}
  (K_R+H_k)\cdot H_k
    = k(k-1),
\end{align*}
and therefore the members of $|H_k|$ have genus
$g= \frac 1 2 k(k-1)+1$.
Then it is straightforward that $d = 2g-2+k$.

One has
\begin{align*}
  h^0(R,H_k)
  &= h^0(\Gamma, \Sym^k (\O_\Gamma\oplus \O_\Gamma(-p))
    \otimes \O_\Gamma(kp))
  \\
  &= h^0(\Gamma, \O_\Gamma(kp) \oplus \O_\Gamma((k-1)p)\oplus \cdots
    \oplus \O_\Gamma(p)\oplus \O_\Gamma)
  \\
  &= k+\cdots+2+1+1
  \\
  &= 1 + \frac 1 2 k(k+1),
\end{align*}
and similarly
\begin{align*}
  h^1(R,H_k)
  &= h^1(\Gamma, \O_\Gamma(kp) \oplus \O_\Gamma((k-1)p)\oplus \cdots
    \oplus \O_\Gamma(p)\oplus \O_\Gamma)
  \\
  &= h^1(\O_\Gamma) = 1.
\end{align*}
This, by Proposition~\ref{qd}, \eqref{qd-v},
implies that $S$ has linearly normal hyperplane sections.
\end{proof}

If, on the other hand, one considers
$H_k^{(p')} = k \bigl(C_0+F(p')\bigr)$
for $p'\in C_0$ such that
$kp' \not\lineq kp$,
then
\[
  \restr [K_R+\smash{H_k^{(p')}}] {C_0}
  = \restr [(k-2)C_0+F(kp'-p)] {C_0}
  \lineq p'+(k-1)(p'-p),
\]
and thus the adjoint system $|K_R+H_k|$ has a base point at
the unique point $p^{(k)} \in C_0$ linearly equivalent to
$p'+(k-1)(p'-p)$.
The numerical characters computed above remain unchanged, but
\begin{align*}
  h^0(R,H_k^{(p')})
  &= h^0(\Gamma, \Sym^k (\O_\Gamma\oplus \O_\Gamma(-p))
    \otimes \O_\Gamma(kp'))
  \\
  &= h^0(\Gamma, \O(kp') \oplus \O((kp'-p)\oplus \cdots \oplus
    \O(kp'-(k-1)p)\oplus \O(kp'-kp))
  \\
  &= k+\cdots+2+1+0
  \\
  &= \frac 1 2 k(k+1),
\end{align*}
and 
\begin{align*}
  h^1(R,H_k^{(p')})
  &= h^1(\Gamma, \O(kp') \oplus \O((kp'-p)\oplus \cdots \oplus
    \O(kp'-(k-1)p)\oplus \O(kp'-kp))
  \\
  &= h^1(\O(p'-p)) = 0.
\end{align*}
This implies that for all $C \in |H_k^{(p')}|$,
the linear system $|H_k^{(p')}|$ cuts out a codimension one linear
subsystem of $|\restr {\smash{H_k^{(p')}}} C|$
on $C$.
For $k\geq 3$, there are then two possibilities: either this subsystem
is base-point-free, and then the surface gotten from $|H_k^{(p')}|$
has degree $k^2$ and hyperplane sections that are not linearly normal,
or this subsystem has a unique base point, and then the surface gotten
from $|H_k^{(p')}|$ has degree $k^2-1$ and linearly normal hyperplane
sections.
\end{expl}

\begin{expl}
\label{ex:ell_ruled_det0-dec}
Let $\fe$ be a non-torsion degree $0$ divisor
on $\Gamma$, and $\mathcal{E}=\O_\Gamma\oplus \O_\Gamma(\fe)$.
We consider $R=\P(\mathcal{E})$, with the notation introduced above,
at the beginning of Section~\ref{s:examples}.
The curve $C_0$ is the unique effective divisor in its linear
equivalence class, and there is a unique, irreducible, curve
$C_\fe \lineq C_0-F(\fe)$. One has
$-K_R \lineq C_0 + C_\fe$.
Let $p$ be a general point on $\Gamma$.
For all $g\in \N$, let
$H_g \lineq gC_0+F(p)$.

The following properties hold.
For all $g\geq 3$, the linear system $|H_g|$ has dimension $g$; it has
two base points on $C_0$ and $C_\fe$ respectively, and its proper
transform $|H'_g|$ on the blow-up 
$R'\to R$ at these two base points defines a birational morphism
$R' \to S \subset \P^g$ onto a surface with canonical hyperplane
sections. 
The proper transforms $C'_0$ and $C'_\fe$ of $C_0$ and $C_\fe$ have
self-intersection $-1$ and intersect $H_g'$ trivially;
in particular, the surface $S \subset \P^g$ has two elliptic double
points. 
The adjoint system $|K_{R'}+H'_g|$ has two base points, lying on
$C'_0$ and $C'_\fe$ respectively.

We omit the proof of these properties, because it is similar to the
proof of the analogous properties in the next example, which is
thoroughly treated in \cite{edoardo-TVcurves}.
\end{expl}

The next example is kind of a degenerate version of the previous one:
the anticanonical divisor becomes a section with multiplicity two, and
the two base points of the linear system $|H_g|$ become infinitely
near. The adjoint system $|K_{R'}+H_g'|$ has a fixed part, and the
surface image of $|H_g'|$ has an interesting singularity.

\begin{expl}
\label{ex:ell_ruled_det0-indec}
Let $\mathcal{E}$ be the unique indecomposable vector bundle
with trivial determinant on $\Gamma$,
see \cite[Thm.~2.15, p.~377]{hartshorne}, and consider $R =
\P(\mathcal{E})$ with the notation introduced at the beginning of
Section~\ref{s:examples}. 
The curve $C_0$ is the unique effective divisor in its linear
equivalence class, and one has
$-K_R \lineq 2C_0$.
Let $p$ be a general point on $\Gamma$.
For all $g\in \N$, let
$H_g \lineq gC_0+F(p)$.

The following properties hold, see \cite{edoardo-TVcurves}.
For all $g\geq 3$, the linear system $|H_g|$ has dimension $g$; it has
two infinitely near base points, with the proper base point lying on
$C_0$. The proper
transform $|H'_g|$ on the blow-up 
$R'\to R$ at these two base points defines a birational morphism
$R' \to S \subset \P^g$ onto a surface with canonical hyperplane sections.
The proper transform $C'_0$ of $C_0$ has
self-intersection $-1$ and intersects $H_g'$ trivially.
The surface $S \subset \P^g$ has a genus two singularity,\footnote
{This means that, denoting by $f:S'\to S$ of the singularity, the
  stalk of $R^1f_* \O_{S'}$ at the singular point has dimension $2$,
see \cite[Chap.~I, Definition~5.1]{epema}.}
consisting of a double point with an infinitely near double line.
The adjoint system $|K_{R'}+H'_g|$ has the exceptional locus of $R'
\to R$ as a fixed part.
\end{expl}

\endgroup

\subsection{Further properties of the adjoint system}
\label{s:adj-further}

\begin{prop}\label{lem:due}
Let $S, S'$ and $H$ be as in Section \ref {ssec:setup}.  Assume that
$h^0(S', K_{S'}+H)>0$ and $(K_{S'}+H)^2>0$.
Let $D$ be a divisor in  $|K_{S'}+H|$. Then: 
\begin{compactenum}[(i)]
\item\label{due-i}  $h^1(S', K_{S'}+H)=h^1(S', 2K_{S'}+H)=0$; 
\item\label{due-ii}  $h^0(D, K_{S'}+H)=g-1$ and $h^1(D ,K_{S'}+H)=0$; 
\item\label{due-iii}  $h^0(S', 2K_{S'}+H)=h^0(D, \omega_D) -q$;
\item\label{due-iv}
  $p_a(K_{S'}+H) \geq q$,
  and equality holds if and only if $h^0(S', 2K_{S'}+H)=0$;
\item\label{due-v} $p_a(K_{S'}+H)-1= K_{S'}^2+3g-3-d$;
\item\label{due-vi} $(K_{S'}+H)^2= g-2+p_a(K_{S'}+H)\geq g-2+q$.
\end{compactenum} 
\end{prop}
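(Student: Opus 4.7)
The plan is to dispatch the six items in the order they are stated, since each builds on the previous. First I would establish the cohomological vanishings in (i) via Kawamata--Viehweg; these then feed into two restriction sequences to a divisor $D \in |K_{S'}+H|$ that yield (ii) and (iii); the remaining assertions (iv)--(vi) are purely numerical, obtained through the adjunction formula once we know that $D$ is connected.

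For (i): since $H$ is big and nef, Kawamata--Viehweg vanishing immediately gives $h^1(S', K_{S'}+H)=0$. The hypothesis $(K_{S'}+H)^2>0$, combined with Proposition~\ref{nef}(i), shows that $K_{S'}+H$ itself is big and nef, so writing $2K_{S'}+H = K_{S'}+(K_{S'}+H)$ and applying Kawamata--Viehweg a second time yields $h^1(S', 2K_{S'}+H)=0$.

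For (ii) and (iii) I would exploit that $D\sim K_{S'}+H$ to write two restriction sequences. Taking cohomology of
\[
  0 \to \O_{S'} \to \O_{S'}(K_{S'}+H) \to \O_D(K_{S'}+H) \to 0,
\]
using $h^0(\O_{S'})=1$, $h^1(\O_{S'})=q$, $h^2(\O_{S'})=h^0(K_{S'})=0$ (the last because $\kappa(S')=-\infty$), the value $h^0(K_{S'}+H)=g-q$ from Proposition~\ref{qd}(i), and the vanishing $h^1(K_{S'}+H)=0$ from (i), yields (ii) directly from the long exact sequence. Twisting the ideal sequence of $D$ by $K_{S'}$ gives
\[
  0 \to \O_{S'}(K_{S'}) \to \O_{S'}(2K_{S'}+H) \to \O_D(\omega_D) \to 0,
\]
via the adjunction identification $\omega_D \cong (K_{S'}+D)|_D = (2K_{S'}+H)|_D$. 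Combined with $h^0(K_{S'})=0$, $h^1(K_{S'})=q$ (Serre duality), and $h^1(2K_{S'}+H)=0$ from (i), the long exact sequence yields (iii).

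Finally, for (iv)--(vi): since $K_{S'}+H$ is big and nef, Lemma~\ref{l:1-conn} ensures every $D\in|K_{S'}+H|$ is $1$-connected and in particular connected, so $h^0(D,\omega_D) = p_a(D) = p_a(K_{S'}+H)$. Then (iv) is immediate from (iii) together with $h^0(2K_{S'}+H)\geq 0$. Items (v) and (vi) reduce to the intersection computations $2p_a(K_{S'}+H)-2 = (2K_{S'}+H)\cdot(K_{S'}+H)$ (adjunction on the surface) and $(K_{S'}+H)^2 = K_{S'}^2 + 2K_{S'}\cdot H + H^2$, after substituting $K_{S'}\cdot H = 2g-2-d$ and $H^2=d$; the inequality in (vi) is then just (iv). I do not anticipate any genuine obstacle: the argument is a routine assembly of Kawamata--Viehweg vanishing, the restriction sequences to $D$, and adjunction.
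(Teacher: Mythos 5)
Your proposal is correct and follows essentially the same route as the paper: Kawamata--Viehweg for (i), the two restriction sequences to $D$ for (ii) and (iii), and $1$-connectedness of $K_{S'}+H$ (Lemma~\ref{l:1-conn}) to identify $h^0(D,\omega_D)$ with $p_a(K_{S'}+H)$ for (iv). The only cosmetic difference is in (vi), where the paper applies Riemann--Roch on $D$ using (ii) while you combine (v) with a direct intersection computation; both are equivalent routine calculations.
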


\begin{proof}  
\eqref{due-i} comes from the Kawamata-Viehweg theorem,
because both $H$ and $K_{S'}+H$ are nef and big,
see Proposition \ref{nef}, (i).

For \eqref{due-ii}, consider the long exact sequence coming from
\[
  0\to
\mathcal O_{S'}\to  \mathcal O_{S'}(K_{S'}+H)\to \mathcal
O_{D}(K_{S'}+H)\to 0,
\]
namely
\begin{multline*}0\to H^0(S',\mathcal O_{S'})\to H^0(S',K_{S'}+H)\to
  H^0(D, K_{S'}+H) 
  \\\to H^1(S', \mathcal O_{S'})\to
  H^1(S',K_{S'}+H)\to H^1(D,K_{S'}+H) \to H^2(S', \mathcal O_{S'}) = 0.
\end{multline*}
Then, \eqref{due-ii} follows from 
$h^0(S',K_{S'}+H)=g-q$ (Proposition \ref{qd}, \eqref{qd-lem:adj}),
$h^1(S', K_{S'}+H)=0$, and
$ h^2(S',\mathcal O_{S'})=h^0(S', K_{S'})=0$.

\eqref{due-iii}
follows from the long exact sequence  coming from $$0\to \mathcal O_{S'}(K_{S'})\to  \mathcal O_{S'}(2K_{S'}+H)\to \omega_D\to 0$$ and  the fact that $h^1(S', 2K_{S'}+H)=0$. 

For \eqref{due-iv}, note  that $K_{S'}+H$ is 1-connected because it is
nef and big (Lemma \ref{l:1-conn}), therefore
$p_a(K_{S'}+H)= h^0(D,\omega_D)$. Then, \eqref{due-iv} follows from
\eqref{due-iii}.

For \eqref{due-v}, just use the adjunction formula.  

For \eqref{due-vi}, note that by the Riemann--Roch theorem  and
assertion \eqref{due-ii},
$ g-1=h^0(D,K_{S'}+H)=(K_{S'}+H)^2+1-p_a(K_{S'}+H) $, 
and so $(K_{S'}+H)^2=g-2+p_a(K_{S'}+H)$.
The inequality follows from \eqref{due-iv}.
\end{proof}

\begin{prop}\label{lbirational}
Let $S, S'$ and $H$ be as in Section \ref {ssec:setup},
and assume that
$h^0(S', K_{S'}+H)>0$ and $(K_{S'}+H)^2>0$.  If
$H^2 \geq 10$,
then the map defined by
$|K_{S'}+H|$ is birational.
\end{prop}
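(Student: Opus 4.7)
The plan is to apply Reider's theorem to the nef and big divisor $H$, whose square $H^2 = d \geq 10$ meets the Reider threshold. I assume for contradiction that the rational map $\phi := \phi_{|K_{S'}+H|}$ is not birational onto its image. Then the locus of pairs $(p,q) \in S' \times S'$ that fail to be separated by $|K_{S'}+H|$ is Zariski dense, and for each such pair---where we allow $(p,q)$ to be infinitely near so as to also encode the separation of tangent directions---Reider's theorem yields an effective divisor $D = D_{p,q}$ passing through $p$ and $q$ whose numerical type is one of: \emph{(a)} $H\cdot D = 0$ with $D^2 \in \{-1,-2\}$; \emph{(b)} $H\cdot D = 1$ with $D^2 \in \{-1,0\}$; or \emph{(c)} $H\cdot D = 2$ with $D^2 = 0$.

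Each of these cases is to be ruled out by exploiting the geometry forced on $S$. Case \emph{(a)} is the easiest: $H\cdot D = 0$ forces every irreducible component of $D$ to be contracted by $\pi\colon S' \to S$, and by the minimality assumption on $S'$ no such component is a $(-1)$-curve; so $D$ is supported on the finite union of $(-2)$-configurations contracted above the singularities of $S$ and is rigid, hence cannot pass through a generic pair $(p,q)$. For case \emph{(b)}, the image $\pi(D)$ is a line on $S$ containing the generic image pair $(\pi(p),\pi(q))$. A dimension count---comparing the $4$ dimensions of pairs to the at most $r+2$ dimensions of pairs on an $r$-dimensional family of lines---forces the family of lines on $S$ to have dimension at least two, hence $S$ to be a plane. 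This contradicts the non-degeneracy of $S \subset \P^n$ with $n \geq 3$. Alternatively, one extracts a $1$-parameter subfamily of lines covering $S$, making $S$ a scroll and contradicting $h^0(K_{S'}+H) > 0$ via Lemma~\ref{lem:stup}.

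Case \emph{(c)} is the most delicate. If $\pi(D)$ is the union of two lines we reduce to case \emph{(b)}; otherwise $\pi(D)$ is a (possibly singular) plane conic. An analogous dimension count then produces a rational pencil $\{G\}$ on $S'$ with $G^2 = 0$ and $H\cdot G = 2$. Applying Lemma~\ref{grau} with $m=1$ gives either $K_{S'}+H \lineq 0$, or that every component of every effective member of $|K_{S'}+H|$ lies in a fibre of $\{G\}$; in either subcase $(K_{S'}+H)^2 = 0$, contradicting the hypothesis $(K_{S'}+H)^2 > 0$.

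The main obstacle I anticipate is executing the dimension count in case \emph{(c)} when the irregularity $q := q(S')$ is positive: there, effective divisors of a given numerical class form a family parametrised by $\mathrm{Pic}^0(S')$, which inflates the naive count and is not controlled solely by the linear-system dimension $h^0(D)-1$. This is where I would lean on the connectedness-based strengthening of Reider's technique invoked in Proposition~\ref{connected} (built on Lemma~\ref{l:1-conn}), and on the fact that, because $\kappa(S') = -\infty$, the Albanese pencil of $S'$ already consists of rational curves, which may serve as $\{G\}$ in the final step via Lemma~\ref{grau}.
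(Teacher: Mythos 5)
Your proposal is correct and follows essentially the same route as the paper: a Reider-type theorem applied to $H$ with $H^2\geq 10$ (the paper uses the Beltrametti--Sommese version from \cite{BS}), followed by the same case analysis on $(H\cdot D, D^2)$, with the low-degree cases excluded because $S$ is neither a scroll nor degenerate, and the conic case excluded because a pencil $\{G\}$ of rational curves with $H\cdot G=2$ forces $(K_{S'}+H)\cdot G=0$ and hence $(K_{S'}+H)^2\leq 0$. Two small inaccuracies that do not affect the outcome: the non-separated locus of pairs is not Zariski dense in $S'\times S'$ (only its projection to the first factor is dominant, which is all you actually use), and in case \emph{(c)} you should justify that the moving rational component $G$ of $D$ has $G^2=0$ --- the paper does this by noting that $G^2>0$ would give a two-dimensional family of conics and force $S=V_2$, contradicting $h^0(S',K_{S'}+H)>0$.
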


\begin{proof}
Suppose by contradiction that the map defined by
$|K_{S'}+H|$ is not birational.
Let $x\in S'$ be a general point, and $y\in S'$ be such that $x\neq y$
and $x$ and $y$ have the same image. Then, by the main theorem 2.1 of
\cite{BS}, there is an effective divisor $D$, containing $x$ and $y$,
such that
\[
  D\cdot H -2 \leq D^2 \leq 1.
\]
Since $h^0(S', K_{S'}+H)>0$, $S$ is not a scroll, hence $D\cdot H >1$, and
$D\cdot H$ equals $2$ or $3$.
If $D\cdot H =3$, then $D^2=1$, and this contradicts the index theorem
because $H^2 \geq 10$. So we must have $D \cdot H = 2$, and $S$ is
swept out by irreducible conics. Let $A$ be the preimage in $S'$ of a
general member of this family of conics.  We claim that $A^2=0$. Indeed, since $H\cdot A=2$ and $H^2\geq 10$, if $A^2\geq 1$, then the Hodge index theorem gives
$$
4=(H\cdot A)^2\geq H^2 A^2\geq H^2\geq 10,
$$
a contradiction. Thus,  $A$ moves in a (possibly
non-linear) pencil. Now, $H\cdot A=2$ and $K_{S'} \cdot
A = -2$, hence $A\cdot (K_{S'}+H) = 0$, and therefore every member of
$|K_{S'}+H|$ consists of curves contained in members of the pencil
$\{A\}$; this implies that $(K_{S'}+H)^2 \leq 0$, in contradiction
with our assumptions. This ends the proof.
\end{proof}

\section 
{The irrational case}
\label{S:irreg}

Here we consider surfaces with $q>0$ and $h^0(S', K_{S'}+H)>0$; the
latter assumption is equivalent to $g>q$ by Proposition~\ref{qd}, \eqref{qd-lem:adj}.
We set $\mu=H\cdot G$, where $G$ denotes the general fibre of the
Albanese pencil. 
Since $g>q$, we have $\mu\geq 2$;
on the other hand, $g-1\geq  \mu(q-1)$ by Riemann--Hurwitz.

\subsection{The case $(K_{S'}+H)^2= 0$}

\begin{lem}\label{lem:uno}
Let $S, S'$ and $ H$ be as in Section \ref {ssec:setup},
and assume $q>0$ and $h^0(S', K_{S'}+H)>0$.

If $\mu=2$ (hence $S$ is ruled by conics), then $(K_{S'}+H)^2= 0$ (and so $K_{S'}^2+4g-4-d=0$), and  $|K_{S'}+H|$  is composed with the Albanese pencil.

Conversely, if $(K_{S'}+H)^2=0$, then $|K_{S'}+H|$  is composed with the Albanese pencil  and $\mu =2$. 
\end{lem}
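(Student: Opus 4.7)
The proof splits into two directions, and both follow quite readily from Proposition~\ref{feixe} combined with a Hodge index argument.

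For the forward direction, I would start from the observation that, by the setup, the general fibre $G$ of the Albanese map is a smooth rational curve; hence $G^{2}=0$ and, by adjunction, $K_{S'}\cdot G=-2$. The assumption $\mu=H\cdot G=2$ then gives $(K_{S'}+H)\cdot G=0$. By Proposition~\ref{nef}, (i), the divisor $K_{S'}+H$ is nef, so $(K_{S'}+H)^{2}\geq 0$. The key step is to rule out strict inequality: if $(K_{S'}+H)^{2}>0$, then $K_{S'}+H$ would be big and nef, so by the Hodge index theorem its orthogonal complement in $NS(S')_{\R}$ would be negative definite; the vanishing $(K_{S'}+H)\cdot G=0$ on the nonzero class $G$ with $G^{2}=0$ would then give a contradiction. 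Thus $(K_{S'}+H)^{2}=0$, and the identity $K_{S'}^{2}+4g-4-d=0$ follows from Proposition~\ref{qd}, \eqref{qd-viii}. Since $q>0$, the surface $S'$ is non-rational and in particular not a (weak) Del Pezzo surface, so $K_{S'}+H\not\sim 0$; we therefore fall in case (b) of Proposition~\ref{feixe}, which directly asserts that for $q>0$ the associated pencil of rational curves coincides with the Albanese pencil, so $|K_{S'}+H|$ is composed with the Albanese pencil.

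For the converse direction, assume $(K_{S'}+H)^{2}=0$. Again, since $q>0$, case (a) of Proposition~\ref{feixe} is excluded, so we land in case (b): $|K_{S'}+H|$ is composed with the Albanese pencil, and the general member $G$ of that pencil satisfies $G\cdot H=2$, that is, $\mu=2$.

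Because Proposition~\ref{feixe} already packages the classification in the $(K_{S'}+H)^{2}=0$ case, the argument is very short in each direction. The only substantive step is the Hodge index application used to pass from $\mu=2$ to $(K_{S'}+H)^{2}=0$ in the forward direction, and this is entirely standard, so I do not foresee any real obstacle.
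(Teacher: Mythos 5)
Your proof is correct. The converse direction is exactly the paper's: the authors simply invoke Proposition~\ref{feixe}, excluding case (a) because $q>0$, just as you do. For the forward direction, however, you take a slightly different route from the paper. The paper deduces the first assertion from Lemma~\ref{grau} (applied with $m=1$): since $\mu=2$ attains the lower bound $H\cdot G\geq 2$, either $K_{S'}+H\sim 0$ (excluded as $q>0$) or every component of every divisor in $|K_{S'}+H|$ lies in a fibre of the Albanese pencil, whence $(K_{S'}+H)^2=0$ by Zariski's lemma together with nefness. You instead observe directly that $(K_{S'}+H)\cdot G=0$, that $K_{S'}+H$ is nef by Proposition~\ref{nef}, (i), and that $(K_{S'}+H)^2>0$ would force the numerically nontrivial isotropic class $G$ into the negative definite orthogonal complement of $K_{S'}+H$, a contradiction with the index theorem; then Proposition~\ref{feixe} delivers the statement about the Albanese pencil. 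Both arguments are valid and of comparable length; yours bypasses Lemma~\ref{grau} entirely and concentrates the geometric content in Proposition~\ref{feixe}, at the cost of not recording the (occasionally useful) intermediate fact that the adjoint divisors are supported on Albanese fibres, which Lemma~\ref{grau} provides for free.
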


\begin{proof}
The first assertion follows from Lemma \ref{grau}.
The converse is  Proposition \ref{feixe}.
\end{proof}


\begin{lem}\label{prop:irr}
Let $S, S'$ and $ H$ be as in Section \ref {ssec:setup},
and assume $q>0$ and $h^0(S', K_{S'}+H)>0$.
If $d>3g-3$, 
then $\mu=2$ and $g> 8q-7$.
\end{lem}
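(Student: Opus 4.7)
My plan is to derive the two assertions by separate short numerical arguments pulling together Proposition~\ref{nef}(iii), Proposition~\ref{lem:due}, and Lemma~\ref{lem:uno}. The key structural input is that, under the standing assumption $\kappa(S') = -\infty$ together with $q\geq 1$, the surface $S'$ is birational to a minimal ruled surface over a curve of genus $q$, so
\[
  K_{S'}^2 \leq 8(1-q) \leq 0;
\]
this negativity of $K_{S'}^2$ is what breaks compatibility with the positive lower bounds on $p_a(K_{S'}+H)$ available when $(K_{S'}+H)^2>0$.

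For the inequality $g > 8q - 7$: since $q > 0$ rules out the Veronese surface $V_3$ (which is rational), Proposition~\ref{nef}(iii), applicable because $h^0(S',K_{S'}+H)>0$, yields $d\leq 4g-8q+4$. Combining with the hypothesis $d>3g-3$ gives $3g-3 < 4g-8q+4$, i.e.\ $g > 8q-7$. For the assertion $\mu = 2$: by the converse statement of Lemma~\ref{lem:uno}, it is enough to show $(K_{S'}+H)^2=0$. I would argue by contradiction, assuming $(K_{S'}+H)^2 > 0$. Proposition~\ref{lem:due}(v) gives
\[
  p_a(K_{S'}+H) = K_{S'}^2 + 3g - 2 - d;
\]
plugging in $d > 3g - 3$ yields $p_a(K_{S'}+H) < K_{S'}^2 + 1$, hence $p_a(K_{S'}+H) \leq K_{S'}^2$. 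Combined with $K_{S'}^2 \leq 8(1-q) \leq 0$ this forces $p_a(K_{S'}+H) \leq 0$, in contradiction with Proposition~\ref{lem:due}(iv), which gives $p_a(K_{S'}+H) \geq q \geq 1$. Therefore $(K_{S'}+H)^2=0$, and Lemma~\ref{lem:uno} concludes that $\mu = 2$.

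Main obstacle: there is no single hard step; the argument is a clean synthesis of numerical facts already proved. The point to spot is the tension between the lower bound $p_a(K_{S'}+H) \geq q$ (which requires $(K_{S'}+H)^2 > 0$ and uses 1-connectedness) and the formula $p_a(K_{S'}+H) = K_{S'}^2 + 3g - 2 - d$, once the hypothesis $d>3g-3$ is combined with the structural inequality $K_{S'}^2\leq 8(1-q)$ forced by irregularity and negative Kodaira dimension. The second conclusion $g > 8q-7$ is in fact also a direct consequence of this same chain once $(K_{S'}+H)^2 = 0$ is established, so the whole lemma hinges on that single contradiction.
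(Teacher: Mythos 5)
Your proof is correct, but it takes a genuinely different route from the paper's. The paper establishes $\mu=2$ first and directly, by invoking Hartshorne's bound $d\leq \frac{2\mu}{\mu-1}(g-1)$ from \cite[Thm.\ (2.3)]{Ha}: if $\mu\geq 3$ this gives $d\leq 3g-3$, contradicting the hypothesis. It then feeds $\mu=2$ into Lemma~\ref{lem:uno} to get $(K_{S'}+H)^2=0$, i.e.\ $K_{S'}^2+4g-4-d=0$, and extracts $g>-K_{S'}^2+1\geq 8q-7$ from $d>3g-3$ and $K_{S'}^2\leq 8-8q$. You instead run the logic in the opposite direction: you rule out $(K_{S'}+H)^2>0$ by an internal contradiction between $p_a(K_{S'}+H)\geq q\geq 1$ (Proposition~\ref{lem:due}, \eqref{due-iv}) and $p_a(K_{S'}+H)=K_{S'}^2+3g-2-d\leq K_{S'}^2\leq 8(1-q)\leq 0$ (Proposition~\ref{lem:due}, \eqref{due-v}, plus $d\geq 3g-2$), and only then use the \emph{converse} direction of Lemma~\ref{lem:uno} to conclude $\mu=2$; your bound $g>8q-7$ comes separately from Proposition~\ref{nef}(iii), with $V_3$ correctly excluded by $q>0$. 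The trade-off: the paper's argument is shorter but leans on a nontrivial external theorem of Hartshorne, while yours is entirely self-contained within the paper's own adjunction machinery --- in effect you re-derive the relevant special case of Hartshorne's inequality from Kawamata--Viehweg, adjunction, and $K_{S'}^2\leq 8(1-q)$. Both are valid; all the hypotheses of the cited propositions (in particular $(K_{S'}+H)^2>0$ for Proposition~\ref{lem:due}, which you only use inside the contradiction) are satisfied where you use them.
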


\begin{proof}
By \cite [Thm. (2.3)]{Ha}, one has
$$
d\leq \frac {2\mu}{\mu-1}(g-1).
$$
If $\mu\geq 3$, this  yields $d\leq 3(g-1)$, a contradiction. Hence  we have $\mu=2$. 

So, by Lemma \ref{lem:uno}, $ (K_{S'}+H)^2=0$,
i.e., $K_{S'}^2+4g-4-d=0$.
Then the hypothesis $d>3g-3$ gives $K_{S'}^2+g-1>0$,
and so $g>-K_{S'}^2+1\geq 8q-7$.
\end{proof}

\subsection{The case $(K_{S'}+H)^2> 0$}

\begin{lem}\label{bounds}
  Let $S, S'$ and $ H$ be as in  Section
  \ref{ssec:setup}, and assume that  $q>0$, 
  $h^0(S', K_{S'}+H)>0$,  and $(K_{S'}+H)^2>0$.  Then $g\geq 9q-7$. 
  Furthermore, if equality holds, then
  $S'$ is minimal, $d=2g-1$, $p_a(K_{S'}+H)=q$,
  and $h^0(S', 2K_{S'}+H) =0$.
\end{lem}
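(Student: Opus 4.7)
The plan is to combine three ingredients, all already established in the paper, and chase the inequalities. First, Proposition~\ref{qd}\eqref{qd-viii} gives the identity
\[
  (K_{S'}+H)^2 = K_{S'}^2 + 4g - 4 - d.
\]
Second, Proposition~\ref{lem:due}\eqref{due-vi} gives the lower bound $(K_{S'}+H)^2 \geq g - 2 + q$. Combining these two yields
\[
  K_{S'}^2 \;\geq\; q + d - 3g + 2.
\]
Third, the standing hypothesis $d \geq 2g-1$ then gives $K_{S'}^2 \geq q - g + 1$.

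Now I would invoke the classification of surfaces of Kodaira dimension $-\infty$: since $q > 0$, the minimal model of $S'$ cannot be $\mathbb{P}^2$, so it is a geometrically ruled surface over a curve of genus $q$, and therefore $K_{S'}^2 \leq 8 - 8q$ (blowing up can only decrease $K^2$). Chaining the two bounds on $K_{S'}^2$ produces
\[
  q - g + 1 \;\leq\; K_{S'}^2 \;\leq\; 8 - 8q,
\]
which rearranges to $g \geq 9q - 7$, as desired.

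For the equality case, simply trace back through the chain. If $g = 9q-7$, then all three inequalities above must be equalities. Equality $K_{S'}^2 = 8-8q$ means no blow-ups occurred, so $S'$ is minimal. Equality in $d \geq 2g-1$ gives $d = 2g-1$. Equality in Proposition~\ref{lem:due}\eqref{due-vi} gives $p_a(K_{S'}+H) = q$. Finally, Proposition~\ref{lem:due}\eqref{due-iv} states that this last equality is equivalent to $h^0(S', 2K_{S'}+H) = 0$, giving the remaining conclusion.

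There is no real obstacle: the whole argument is a short chase once one has identified which previously proven inequalities to assemble. The only subtle point to verify is the use of $K_{S'}^2 \leq 8-8q$, which requires noting that $q > 0$ (together with the standing Kodaira-dimension-$-\infty$ hypothesis) rules out $\mathbb{P}^2$ as the minimal model of $S'$.
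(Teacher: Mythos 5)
Your proof is correct and follows essentially the same route as the paper: it combines Proposition~\ref{qd}, \eqref{qd-viii}, the bound $(K_{S'}+H)^2\geq g-2+q$ from Proposition~\ref{lem:due}, \eqref{due-vi}, the hypothesis $d\geq 2g-1$, and $K_{S'}^2\leq 8-8q$, then traces equality back through the chain exactly as the paper does. The only cosmetic difference is that you isolate $K_{S'}^2$ in the middle of the chain, whereas the paper bounds $(K_{S'}+H)^2$ from above and below; the ingredients and conclusions are identical.
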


\begin{proof}
From $q>0$, we have $K_{S'}^2\leq 8-8q$. Moreover, we assume $d\geq
2g-1$. Then:
\begin{align*}
  (K_{S'}+H)^2
  &= K_{S'}^2+4g-4-d
  \\
  & \leq 8-8q+4g-4+1-2g=5-8q+2g.
\end{align*}
On the other hand, by Proposition \ref{lem:due}, \eqref{due-vi},
$(K_{S'}+H)^2 \geq g-2+q$.
Thus,
\[
  5-8q+2g\geq g-2+q,
\]
i.e., $g\geq 9q-7$. 
If equality holds, then all the above inequalities are equalities, in
particular $K_{S'}^2 = 8-8q$, $d=2g-1$, and equality holds in 
Proposition \ref{lem:due}, \eqref{due-vi}, which means
$p_a(K_{S'}+H)=q$, and then
$h^0(S', 2K_{S'}+H) =0$ by 
Proposition \ref{lem:due}, \eqref{due-iv}.
\end{proof}

\begin{prop}\label{nopencil} Let $S, S'$ and $ H$ be as in  Section \ref
  {ssec:setup},
  and assume that  $q>0$, $h^0(S', K_{S'}+H)>0$,  and
  $(K_{S'}+H)^2>0$.
  If $g-q\geq 3$, then $|K_{S'}+H|$ is not composed with a pencil.
\end{prop}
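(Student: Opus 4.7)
The plan is to argue by contradiction. Assume $|K_{S'}+H|$ is composed with a pencil $\{P\}$, and write $K_{S'}+H=nP+Z$ with $|nP|$ the moving part and $Z$ the fixed part. Since $\dim|K_{S'}+H|=g-q-1\geq 2$, one has $n\geq 2$, and after resolving the base points of $\{P\}$ we may assume $P^2=0$. The first key step is to observe that $P\cdot Z\geq 1$: otherwise, since $P$ is a non-zero nef class with $P^2=0$, Hodge index forces $Z^2\leq 0$, and then $(K_{S'}+H)^2=2nP\cdot Z+Z^2\leq 0$, contradicting the hypothesis.

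Next, any fibration of $S'$ onto a positive-genus curve factors through the Albanese morphism $\alb\colon S'\to \Gamma$. Up to replacing $\{P\}$ by a derived pencil, $\{P\}$ is either the Albanese pencil $\{G\}$ or a rational pencil $\phi\colon \tilde{S}\to \P^1$ with $\phi$ not factoring through $\alb$; in the latter case $\phi|_G$ is non-constant for general $G$, whence $P\cdot G\geq 1$, and Riemann--Hurwitz applied to $\alb|_P\colon P\to \Gamma$ yields $p_a(P)\geq q$.

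For the Albanese pencil, I would write $K_{S'}+H=\alb^{*}D+Z$ with $n=\deg D$, $h^0(\Gamma,D)=g-q$, and set $\mu=H\cdot G$. The identity $2g-2=n\mu+Z\cdot H$ gives $n\mu\leq 2g-2$. If $\mu=2$ then $P\cdot Z=Z\cdot G=\mu-2=0$, contradicting the first step. If $\mu\geq 3$, Clifford's theorem on $\Gamma$ yields either $n=g-1$ (non-special, forcing $3(g-1)\leq 2g-2$, impossible for $g\geq 2$) or $n\geq 2(g-q-1)$ (special, whence $\mu(g-q-1)\leq g-1$ and so $g\leq (3q+2)/2$). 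Combining this with the bound $g\geq 9q-7$ supplied by Lemma \ref{bounds} (itself coming from $(K_{S'}+H)^2\geq g-2+q$ together with $(K_{S'}+H)^2\leq 2g-8q+5$, the latter using $K_{S'}^2\leq 8-8q$ and $d\geq 2g-1$) yields $q=1$ and $g\leq 5/2$, contradicting $g\geq q+3=4$.

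For the rational pencil case, $n=g-q-1$ and $P\cdot H\geq 1$ (since $P$ moves in a family and $H$ is big and nef). Then $P\cdot Z=2p_a(P)-2+P\cdot H\geq 2q-1$, so $(K_{S'}+H)^2\geq (g-q-1)(2q-1)$; combined with $(K_{S'}+H)^2\leq 2g-8q+5$ this gives $(2q-3)g\leq 2q^2-7q+4$, incompatible with $g\geq q+3$ whenever $q\geq 2$. The main obstacle I anticipate is the subcase $q=1$ with a rational pencil: the above bounds degenerate there, and I would refine them by using $\mu':=P\cdot H\leq (2g-2)/(g-2)$ (which forces $\mu'=2$ for $g\geq 5$) together with the Hodge-index estimate $Z^2\leq (Z\cdot H)^2/d$, leading to $(K_{S'}+H)^2\leq 8p_a(P)+Z^2$ with $Z\cdot H=2$, which is incompatible with the lower bound $(K_{S'}+H)^2\geq g-1$ for $g\geq 5$; the residual case $(q,g)=(1,4)$ should then yield to a direct tally of the same estimates.
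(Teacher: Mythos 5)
Your overall strategy --- contradiction, decomposition $K_{S'}+H=\alpha P+Z$, the Albanese versus rational pencil dichotomy, and the numerical squeeze against $g\geq 9q-7$ from Lemma~\ref{bounds} --- is the same as the paper's, but several steps do not hold up. First, the reduction ``after resolving the base points of $\{P\}$ we may assume $P^2=0$'' is not legitimate: if $\sigma:\tilde S\to S'$ resolves the base points, then $\sigma^*(K_{S'}+H)$ is no longer of the form $K_{\tilde S}+(\text{pullback of }H)$, so your repeated use of adjunction in the form $P\cdot Z=2p_a(P)-2+P\cdot H$ computes $\tilde P\cdot(K_{\tilde S}+\sigma^*H)$ rather than $\tilde P\cdot\sigma^*(K_{S'}+H)$ and is off by the multiplicities of the resolved base points; moreover the minimality hypotheses of Section~\ref{ssec:setup} fail on $\tilde S$, so Lemma~\ref{bounds} cannot be quoted there verbatim. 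If instead you stay on $S'$ and allow $P^2>0$, your ``first key step'' ($P\cdot Z\geq 1$, hence $Z\neq0$) and the lower bound $P\cdot(K_{S'}+H)\geq 2q-1$ both collapse. The paper sidesteps all of this by working with the single inequality $\alpha\, H\cdot P\leq H\cdot(K_{S'}+H)=2g-2$, which is insensitive to $P^2$ and to $Z$. Second, you never establish the key geometric input $H\cdot P\geq 3$: the general member of $\{P\}$ has genus $\geq q\geq 1$ (it is an Albanese fibre, with $\mu\geq3$ by Lemma~\ref{lem:uno}, or it dominates $\Gamma$), and $|H|$ defines a birational map, so its image cannot be a line or a conic. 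Using only $P\cdot H\geq 1$ is what makes your $q=1$ analysis degenerate and forces the Hodge-index detour, in which the bound $(K_{S'}+H)^2\leq 8p_a(P)+Z^2$ is unexplained (with $n=g-q-1\geq3$ and $P^2=0$ one gets $(K_{S'}+H)^2=4np_a(P)+Z^2$, and you have no lower bound on $Z^2$ with which to reach a contradiction).

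Third, and most seriously, the residual case $(q,g)=(1,4)$ is not a routine ``tally of the same estimates''. There is a numerically consistent scenario $K_{S'}+H\equiv 2P$ with $Z=0$, $P^2=1$, $p_a(P)=1$, $H\cdot P=3$, $(K_{S'}+H)^2=4$, $d\in\{7,8\}$, which none of the inequalities you list excludes. The paper kills this case by observing that a system composed with a pencil has base points, invoking Proposition~\ref{connected} to produce a genus-one curve $\theta$ with $\theta^2=-1$ and $H\cdot\theta=0$, deducing $Z\neq0$ from $1=\theta\cdot(2P+Z)$, then deriving $P^2=0$ from $4P^2+2P\cdot Z\leq(K_{S'}+H)^2\leq 5$ together with $P\cdot Z\geq 1$ (Lemma~\ref{l:1-conn}), and finally reaching the contradiction $6\leq 2P\cdot H\leq 2P\cdot(K_{S'}+H)\leq 5$. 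This base-point analysis is indispensable and is entirely absent from your proposal.
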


\begin{proof}  
Suppose by contradiction that $|K_{S'}+H|$ is composed with a pencil
$\{P\}$.  Since $h^0(K_{S'}+H)=g-q$,   we can  then write 
$$K_{S'}+H\equiv \alpha P+Z$$ where $Z$ is the fixed part (possibly
zero), and $\alpha\geq g-q-1$, with equality holding if and only if
the pencil is rational.
Being $K_{S'}+H$ nef and big, it is $1$-connected
(see Lemma~\ref{l:1-conn}), so
if $Z\neq 0$ then $P\cdot Z >0$.

Since  $h^0(S', K_{S'}+H)>0$ and $(K_{S'}+H)^2>0$, if $\{P\}$ is the
Albanese pencil, then $H\cdot P\geq 3$, by Lemma \ref{lem:uno}.
If $\{P\}$ is not the Albanese pencil, then  the general curve in
$\{P\}$ has genus at least $q$ and so, since the map defined by $|H|$ is
birational, we must have $H\cdot P\geq 3$ in this case as well.
The upshot is that $H\cdot P \geq 3$ in any event.
  
Since $H$ is nef and $H\cdot (K_{S'}+H)=2g-2$,
we obtain $\alpha H\cdot P\leq 2g-2$, and thus
\begin{equation}
\label{eq:nopencil}
  3 \alpha \leq \alpha H\cdot P\leq 2g-2.
\end{equation}
If $\alpha\geq g-q$, we obtain    $3g-3q\leq 2g-2$, i.e., $g\leq 3q-2$.  
Since, by Lemma \ref{bounds},  $g\geq 9q-7$, we see that
$\alpha\geq g-q$ is impossible. 

Therefore, one has $\alpha = g-q-1\geq 2$, and the pencil $\{P\}$ is
rational; in particular, $\{P\}$ is not the Albanese pencil, and
therefore $p_a(P)>0$.
From \eqref{eq:nopencil}, we obtain 
\[
  3\leq \frac {2g-2}  {g-q-1} =
  2+ \frac{2q} {g-q-1},
\]
hence $2q\geq g-q-1$, i.e., $g \leq 3q+1$.
Since,
by  Lemma \ref{bounds}, $g\geq 9q-7$, we obtain $6q \leq 8$, hence
$q=1$.
Then, since $g-q\geq 3$ and $g \leq 3q+1$,  we have $g=4$, and
$\alpha = 2$.

Suppose this case occurs.
By the index theorem, we have   $H^2(K_{S'}+H)^2\leq 4(g-1)^2=36$.
Since $H^2=d\geq 7$, this implies that $(K_{S'}+H)^2\leq 5$. 
Moreover, being composed with a pencil,
$|K_{S'}+H|$  has base points. So, by Proposition \ref{connected},
there is a curve $\theta$ such that $\theta^2=-1$, $\theta \cdot
K_{S'}=1$, and $\theta\cdot H=0$.    
So,
\[
  1= \theta \cdot (K_{S'}+H)=\theta \cdot (2P+Z)=2\theta\cdot P+
  \theta\cdot Z,
\]
which implies that $Z\neq 0$. 
Now notice that, since
\[
  4P^2+2P\cdot Z= 2P\cdot  (K_{S'}+H)
  \leq (2P+Z)\cdot (K_{S'}+H) \leq 5,
\]
one must have $P^2=0$.
But then $p_a(P)\geq 1$ implies that $P\cdot K_{S'}\geq 0$,
and so $2P\cdot (K_{S'}+H)\geq 2P\cdot H\geq 6$: this contradicts $
(K_{S'}+H)^2\leq 5$.  


We conclude that $|K_{S'}+H|$ is not composed with a pencil. 
\end{proof}

In fact, as we will now prove, the condition $g-q\geq 3$ always holds.

\begin{thm}\label{thm:npp} Let $S, S'$ and $ H$ be as in Section \ref {ssec:setup} and assume that $q>0$, $h^0(S', K_{S'}+H)>0$ and $(K_{S'}+H)^2>0$. Then $g-q\geq 3$, and therefore $|K_{S'}+H|$ is not composed with a pencil. 
\end{thm}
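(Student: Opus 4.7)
By Proposition~\ref{nopencil}, the second assertion of the theorem is a consequence of $g-q\geq 3$, so the whole task is to establish this inequality. Lemma~\ref{bounds} gives $g\geq 9q-7$, hence $g-q\geq 8q-7\geq 9$ whenever $q\geq 2$. The remaining case is $q=1$, where we must show $g\geq 4$; we argue by contradiction, assuming $g\in\{2,3\}$.

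Since $\kappa(S')=-\infty$ and $q=1$, one has $K_{S'}^2\leq 0$. Combined with Proposition~\ref{lem:due}\eqref{due-vi}, the identity $(K_{S'}+H)^2=K_{S'}^2+4g-4-d$, and $d\geq 2g-1$, this yields $d\leq 3g-3$. Proposition~\ref{qd}\eqref{qd-ii} gives $n\leq d-g+1$, so together with $n\geq 3$ the case $g=2$ is excluded: it would force $d=3$ and $n\leq 2$. For $g=3$ the remaining possibilities are $(d,K_{S'}^2)\in\{(5,-1),\,(5,0),\,(6,0)\}$, with $(K_{S'}+H)^2\in\{2,3\}$.

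In every such case, $h^0(K_{S'}+H)=g-q=2$ makes $|K_{S'}+H|$ a pencil, and $(K_{S'}+H)^2>0$ forces this pencil to have non-empty base locus, since otherwise it would define a morphism to $\P^1$ and $(K_{S'}+H)^2=0$. Proposition~\ref{connected} then produces a smooth elliptic curve $\theta\subset S'$ with $\theta^2=-1$ and $H\cdot\theta=0$; by adjunction $K_{S'}\cdot\theta=1$, whence $(K_{S'}+H)\cdot\theta=1$. When $K_{S'}^2=0$, $S'$ is a minimal elliptic ruled surface, and the existence of $\theta$ forces the Hirzebruch-like invariant $e=1$ and identifies $\theta$ with the unique minimal section $C_0$; writing $H\equiv aC_0+bF$ numerically, $H\cdot\theta=0$ gives $b=a$ and $d=H^2=a^2$, incompatible with $d\in\{5,6\}$.

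The main obstacle is the case $K_{S'}^2=-1$, $d=5$, in which $S'$ is a one-point blow-up $\pi\colon S'\to\tilde S$ of a minimal elliptic ruled surface. The strategy is to derive a contradiction by counting base points of $|K_{S'}+H|$ on elliptic $-1$-curves. Because $\theta$ is elliptic, the restriction map $|K_{S'}+H|\cong\P^1\to\mathrm{Pic}^1(\theta)\cong\theta$ is constant, so all divisors of the pencil meet $\theta$ in the same reduced point and the base locus of $|K_{S'}+H|$ on $\theta$ has length exactly $1$. Since the total length of the base locus is $(K_{S'}+H)^2=2$, at least one base point lies off $\theta$, and by Proposition~\ref{connected} it lies on a second elliptic $-1$-curve of $S'$. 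To conclude, it remains to show that $S'$ admits only one such curve in every numerical configuration: on a blow-up of $\tilde S$ an elliptic $-1$-curve arises either as the strict transform of an elliptic $-1$-curve on $\tilde S$ (so $\tilde S$ has $e=1$ and the blown-up point must lie off the unique such curve $C_0$) or as the strict transform of an elliptic $0$-curve through the blown-up point (so $\tilde S$ has $e=0$ and the point lies on the corresponding section). A case-by-case verification along these lines, using that $\tilde S$ is determined up to isomorphism by its invariant $e$ and whether $\mathcal{E}$ is decomposable, yields uniqueness of $\theta$ on $S'$ and thus the desired contradiction with the existence of a second base point, completing the proof.
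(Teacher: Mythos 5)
Your reduction to the single troublesome range $q=1$, $g=3$ is sound and in substance matches the paper's Lemma~\ref{nogq2}; you dispose of the sub-cases with $K_{S'}^2=0$ via the structure of minimal elliptic ruled surfaces (an irreducible curve of negative self-intersection forces $e>0$ and must be $C_0$, whence $d=a^2$), where the paper instead applies the index theorem to $K_{S'}+H+\theta$ — both routes are fine. The problem is the remaining case $d=5$, $K_{S'}^2=-1$, $(K_{S'}+H)^2=2$, which is precisely the content of the theorem beyond Lemma~\ref{nogq2}, and there your argument has a genuine gap.

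You assert that the second length-one piece of the base scheme of the pencil $|K_{S'}+H|$ must, ``by Proposition~\ref{connected}'', lie on a second elliptic $(-1)$-curve. Proposition~\ref{connected} only asserts the \emph{existence} of one curve $\theta$ with $\theta^2=-1$, $H\cdot\theta=0$ when the adjoint system has base points; it says nothing about where the base points sit, so it cannot be used to place a given base point on such a curve. In addition, your length count presupposes that $|K_{S'}+H|$ has no fixed component (otherwise the base locus is not finite, and nothing in the hypotheses rules this out a priori), and even granting that, the length-two base scheme can consist of a single point of $\theta$ with multiplicity two — two general members each transverse to $\theta$ at $p$ but mutually tangent there in a direction off $\theta$ — in which case there is no second base point and your dichotomy yields no contradiction. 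Finally, the uniqueness of an elliptic $(-1)$-curve orthogonal to $H$ on a one-point blow-up of a minimal elliptic ruled surface is left as a sketch and is itself delicate (for instance, for $e=0$ one must also exclude elliptic bisections numerically equivalent to $2C_0$ passing through the blown-up point). The paper closes this case by an entirely different, global argument: it studies the auxiliary system $|K_{S'}+H+\theta|$, shows $h^0=3$, that it is not composed with a pencil and that its general member is irreducible of arithmetic genus $2$ and $H$-degree $4$, and concludes that the residual curves with respect to $|H|$ would be infinitely many lines on $S$ — a contradiction. Some such geometric input is needed; the local count of base points alone does not finish the proof.
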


The proof starts by the following lemma, which shows that there is
only one numerical possibility.

\begin{lem}\label{nogq2} Let $S, S'$ and $ H$ be as in Section \ref
  {ssec:setup} and assume that $q>0$, $h^0(S', K_{S'}+H)>0$ and
  $(K_{S'}+H)^2>0$. Then $g-q\geq 3$ unless, possibly, if $q=1$,
  $g=3$, $d=5$, $h^0(S', H)=4$, $(K_{S'}+H)^2=2$, and $K_{S'}^2=-1$. 
 \end{lem}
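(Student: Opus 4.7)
The plan is to combine the lower bound from Lemma~\ref{bounds} with further numerical constraints and the base-point analysis of Proposition~\ref{connected}.

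\emph{Step 1: reduction to $q=1$.} By Lemma~\ref{bounds}, $g \geq 9q-7$, so $g-q \geq 8q-7 \geq 9$ whenever $q \geq 2$. Hence one may assume $q=1$, and the task reduces to ruling out $g=2$ and $g=3$ apart from the stated exception (recalling that $g>q$ by hypothesis).

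\emph{Step 2: the case $g=2$, $q=1$.} Lemma~\ref{bounds} reads $g \geq 2$ here, so all intermediate inequalities in its proof are equalities, forcing $K_{S'}^2=0$ and $d=2g-1=3$. But a non-degenerate irreducible surface of degree $3$ in $\P^n$ with $n \geq 3$ has sectional genus at most $1$ (being either a cubic in $\P^3$ or a cubic scroll in $\P^4$), contradicting $g=2$.

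\emph{Step 3: numerical reduction for $g=3$, $q=1$.} Proposition~\ref{lem:due}, \eqref{due-vi} gives $(K_{S'}+H)^2 \geq g-2+q=2$, while the computation in the proof of Lemma~\ref{bounds} yields $(K_{S'}+H)^2 \leq 5-8q+2g=3$. Using $(K_{S'}+H)^2 = K_{S'}^2+8-d$ together with $d \geq 2g-1=5$ and $K_{S'}^2 \leq 0$, one finds $(d, K_{S'}^2) \in \{(5,-1),(5,0),(6,0)\}$; and Proposition~\ref{qd}, \eqref{qd-ii} with $n \geq 3$ forces $h^1(S',H)=1$ and hence $h^0(S',H)=4$ when $d=5$.

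\emph{Step 4: excluding $K_{S'}^2=0$.} In both remaining cases, $S'$ is a minimal elliptic ruled surface. Since $h^0(K_{S'}+H)=g-q=2$, the adjoint system is a pencil; since $(K_{S'}+H)^2>0$, two general members must share a base point, so Proposition~\ref{connected} provides a smooth elliptic curve $\theta$ with $\theta^2=-1$ and $H \cdot \theta=0$. Writing $\theta \equiv aC_0+bF$ on $S' = \P(\mathcal{E}) \to \Gamma$, an adjunction computation shows that $p_a(\theta)=1$ combined with $\theta^2=-1$ forces $a=1$, so $\theta$ is a section; such a section exists only when the invariant $e$ of $\mathcal{E}$ is odd, in which case $\theta \equiv C_0+\tfrac{e-1}{2}F$. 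Writing $H \equiv \alpha C_0 + \beta F$ numerically, the condition $H \cdot \theta=0$ forces $\beta = \alpha(e+1)/2$, whence
\[
  H^2 = -\alpha^2 e + 2\alpha\beta = \alpha^2.
\]
Since $\mu = H \cdot F = \alpha \geq 2$, this makes $H^2$ a perfect square at least $4$, incompatible with $d \in \{5, 6\}$.

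The main obstacle is the last step: the numerical reductions in Steps~1--3 are quick, but ruling out the two $K_{S'}^2=0$ cases requires combining the strong geometric constraint from Proposition~\ref{connected} with the explicit classification of irreducible curves on minimal elliptic ruled surfaces.
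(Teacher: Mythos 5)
Your proof is correct, and Steps 1--3 follow essentially the same numerical reductions as the paper: Lemma~\ref{bounds} to force $q=1$ and $g\in\{2,3\}$, the equality/degree analysis to kill $g=2$ via the impossibility of a degree-$3$ surface with sectional genus $2$, and Proposition~\ref{lem:due},~\eqref{due-vi} together with $K_{S'}^2\leq 0$ and $d\geq 5$ to reduce to $(d,K_{S'}^2)\in\{(5,-1),(5,0),(6,0)\}$ with $h^0(S',H)=4$ when $d=5$. Where you genuinely diverge is the final exclusion step. The paper disposes of all three cases at once by a Hodge index argument: having produced the curve $\theta$ from Proposition~\ref{connected}, it notes $(K_{S'}+H+\theta)^2=(K_{S'}+H)^2+1$ and $(K_{S'}+H+\theta)\cdot H=2g-2=4$, so the index theorem gives $d\bigl((K_{S'}+H)^2+1\bigr)\leq 16$; this kills $d=6$ outright and forces $(K_{S'}+H)^2=2$, hence $K_{S'}^2=-1$, when $d=5$. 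You instead observe that $K_{S'}^2=0$ with $q=1$ and $\kappa=-\infty$ forces $S'$ to be a minimal elliptic ruled surface $\P(\sE)$, and then compute in $\mathrm{Num}(S')=\mathbb{Z}C_0\oplus\mathbb{Z}F$ that the elliptic curve $\theta$ with $\theta^2=-1$ must be a section in class $C_0+\tfrac{e-1}{2}F$ (so $e$ is odd), whence $H\cdot\theta=0$ forces $H^2=\alpha^2$ with $\alpha=H\cdot F\geq 2$ --- a perfect square, incompatible with $d\in\{5,6\}$. Both routes are sound; your computations ($(a-1)(2b-ae)=0$ from adjunction, and $\beta=\alpha(e+1)/2$ from $H\cdot\theta=0$) check out. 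The paper's index-theorem argument is shorter and makes no use of minimality or of the classification of curves on $\P(\sE)$, while your argument yields the stronger structural conclusion that in the excluded cases the degree would have to be a perfect square; the trade-off is that your method only applies once $K_{S'}^2=0$ has been isolated, so it could not replace the index theorem in handling the surviving case $(5,-1)$, where the paper's inequality is still needed to pin down $(K_{S'}+H)^2=2$ (you obtain this instead directly from $(K_{S'}+H)^2=K_{S'}^2+8-d$, which is equally valid).
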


\begin{proof}
From $h^0(S', K_{S'}+H)>0$, we know that $g>q$
by Proposition~\ref{qd}, \eqref{qd-lem:adj}.
   
Assume that $g-q\leq 2$.
Then, by Lemma \ref {bounds}, one has $g-q\geq 8q-7$, and so we get $q=1$, implying that either $g=2$ or $g=3$. 

Let $g=2$. We have $0<(K_{S'}+H)^2=K_{S'}^2+4g-4-d=K_{S'}^2+4-d$, 
see Proposition \ref{qd}, \eqref{qd-viii}.
Since $q=1$ implies $K_{S'}^2\leq 0$, we have $4-d>0$, and since $d\geq 2g-1=3$,  we conclude that $d=3$.  But if $d=3$, the map defined by $|H|$ cannot be birational because curves of degree 3 have genus at most 1. So $g=2$ does not occur.

If $g=3$, by Proposition \ref {lem:due}, \eqref{due-vi}, we have
$(K_{S'}+H)^2\geq 2$, yielding, as above,
that $d$ can only be $5$ or $6$,
hence also $(K_{S'}+H)^2\leq 3$.  Since $h^0(S',
K_{S'}+H)=g-q=2$, $|K_{S'}+H|$ has base points and so there is a curve
$\theta$ as in   Proposition \ref {connected}.

Now, $(K_{S'}+H+\theta)^2= (K_{S'}+H)^2+1$. 
Moreover, $ 4= (K_{S'}+H)\cdot H= (K_{S'}+H+\theta)\cdot H$. Then, by the index theorem,  $H^2(K_{S'}+H+\theta)^2-16\leq 0$, and  we are left with the only  possibility  $d=5$, $(K_{S'}+H)^2=2$. In this case, from $2=(K_{S'}+H)^2=K_{S'}^2+4g-4-d=K_{S'}^2+3$, we get  $K_{S'}^2=-1$. Finally we have $h^0(S', H)=4$,  because curves of degree $5$  in $\P^n$ with $n>2$ have genus smaller than 3 by Castelnuovo's bound. \end{proof}

Therefore, it only remains to prove that the sole possibility left open
by Lemma \ref {nogq2} in fact does not occur.

\begin{proof}[Proof of Theorem~\ref{thm:npp}]
By Proposition \ref{nopencil} and Lemma \ref{nogq2}, it is enough to show
that  there is no surface  $S$ with $q=1$,  $g=3$, $d=5$, $h^0(S',
H)=4$, $(K_{S'}+H)^2=2$, and $K_{S'}^ 2=-1$. 
So let us assume from now on, by contradiction, that 
$q=1$,  $g=3$, $d=5$, $h^0(S', H)=4$, $(K_{S'}+H)^2=2$, $K_{S'}^2=-1$. As we have seen in the proof of Proposition \ref {nogq2}, $|K_{S'}+H|$ has base points and so there is an irreducible curve $\theta$ of genus 1 with $\theta^2=-1$ and $H\cdot \theta=0$, as in   Proposition \ref {connected}.

Consider the linear system  $|K_{S'} + H+\theta|$.  Look at the exact sequence
$$
0\to \mathcal O_{S'}(K_{S'} + H)\longrightarrow  \mathcal O_{S'}(K_{S'}+H+\theta)  \longrightarrow \mathcal O_{\theta}(K_{S'}+H+\theta) \longrightarrow 0.
$$
We have $h^0(S', K_{S'} + H)=g-q=2$ and $h^1(S', K_{S'} + H)=0$. Moreover,
$\restr[K_{S'} + H+\theta] \theta$ is trivial on  $\theta$
(it is $K_\theta+\restr H \theta$),  hence 
$h^0(\theta, \mathcal O_{\theta}(K_{S'}+H+\theta) )=1$. 
So we have $h^0(S',K_{S'} + H+\theta)=3$, and $\theta$ is not a component of the fixed part of  $|K_{S'} + H+\theta|$ (if any). 
  Note that $(K_{S'} + H+\theta)^2=3$ and $K_{S'} \cdot (K_{S'} + H+\theta)=-1$.

We are going to see  that the general curve in $|K_{S'} + H+\theta|$ is irreducible. \medskip

Claim 1)  {\sl $|K_{S'} + H+\theta|$  is not composed with a pencil.} 
\medskip

Suppose otherwise. Then we can write $|K_{S'} + H+\theta|=\alpha P+Z$ where  $\{P\}$ is a pencil, $\alpha\geq 2$ (because $h^0(S',K_{S'} + H+\theta)=3$) and $Z$ (possibly zero) is the fixed part. 

 From  $(K_{S'} + H+\theta) \cdot H=4$, we have $H\cdot P\leq 2$.  Since  $(K_{S'}+H)^2>0$, $\mu\geq 3$ by Lemma \ref {lem:uno},  and so  $\{P\}$ is not the Albanese pencil.   
 
 But then the general curve $P \in \{P\}$ has arithmetic genus $\geq 1$,  and so  $H\cdot P\leq 2$ gives a contradiction to the fact that 
 the map defined by $|H|$ is birational.

So $|K_{S'} + H+\theta|$  is not composed with a pencil and the claim is proven.

\medskip

Claim 2)  {\sl  The general curve in $|K_{S'} + H+\theta|$  is irreducible.} 
\medskip

Suppose otherwise. Then there is a fixed divisorial part $Z$ and we can 
write  $|K_{S'} + H+\theta|=|M|+Z$  where $M$ is the moving part.  Since, by Claim 1), $|K_{S'} + H+\theta|$ is not composed with a pencil, the general curve in $|M|$ is irreducible.  Note that, since $S'$ is not birational to $\mathbb P^2$, one has $M^2\geq 2$,  since $\dim (|M|)=\dim (|K_{S'} + H+\theta|)=2$.

Remark now that, since $K_{S'} + H$ is nef and $(K_{S'} +
H+\theta)\cdot \theta =0$, also $K_{S'} + H+\theta$ is nef.
Moreover, $(K_{S'} + H+\theta)^2 =3$, hence $K_{S'} +
H+\theta = M+Z$ is also big, and therefore $1$-connected by Lemma
\ref{l:1-conn}.
Thus, $M\cdot Z \geq 1$, and $(M+Z) \cdot Z \geq 0$.
Since $(M+Z)^2=(K_{S'} + H+\theta)^2=3$, 
the only possibility is $M^2=2, M\cdot Z=1, Z^2=-1$, and $(K_{S'} +
H+\theta)\cdot Z=0$.  

Since, as we saw above, $\theta$ is not a component of $Z$
(the restriction map $H^0(S',K_{S'}+H+\theta) \to H^0(\theta,
\O_\theta)$ is surjective),
one has $\theta\cdot Z\geq 0$. Then from $H\cdot Z\geq 0$ and $(K_{S'} + H+\theta)\cdot Z=0$,  we conclude that  $K_{S'}\cdot Z\leq 0$. 
From  $M\cdot Z=1$ we know that $Z$ is 1-connected hence, from the adjunction formula, we obtain $K_{S'}\cdot Z=-1$. So $Z$ is a $(-1)$--divisor, and in fact it is a single $(-1)$-curve because $K_{S'}^2=-1$.   Hence, because $|H|$ does not contract $(-1)$-curves, one has $H\cdot Z> 0$. Therefore, from $H\cdot (M+Z)=H\cdot (K_{S'} + H+\theta)=4$, we have $H\cdot M\leq 3$. 

On the other hand, from $K_{S'} \cdot (M+Z)=K_{S'} \cdot (K_{S'} +
H+\theta)=-1$ and $K_{S'} \cdot Z=-1$, we obtain $K_{S'}
\cdot M=0$, implying that $p_a(M)=2$. This is
in  contradiction with $H\cdot M\leq 3$
because $|H|$ defines a birational map. Hence $Z=0$, and Claim 2) is proven.

\medskip 

From Claims 1) and 2),  the general curve $M$ in 
$|K_{S'} + H+\theta|$ is irreducible;
moreover, as we have seen, $M^2=3$
and $K_{S'}\cdot M=-1$, hence $p_a(M)=2$.

Since $H\cdot M= H\cdot (K_{S'} + H+\theta)=4$,
the  images in $S$ of the
curves in $|K_{S'} + H+\theta|$ must be plane quartics of genus 2,
and the residual with respect to $|H|$ are lines. So 
$S$ would contain infinitely many lines, which is not
possible.
\end{proof}

Examples \ref{ex:ell_ruled_det0-dec} and \ref{ex:ell_ruled_det0-indec} with $g=3$ show that there exist surfaces $S, S'$ with $H$ and $H$ as in Section \ref {ssec:setup} except that $d=2g-2$, with  $q=1$,  $g=3$, $d=4$, $h^0(S', H)=4$, $(K_{S'}+H)^2=2$, $K_{S'}^2=-1$.

\subsection{Empty biadjoint system}
\label{s:empty-2adj}

In this section, we consider the case when $q>0$,
and $h^0(S', 2K_{S'}+H)=0$.
Observe that if $(K_{S'}+H)^2=0$ then, by Lemma~\ref{lem:uno},
$|K_{S'}+H|$ is composed with the Albanese pencil
and thus $h^0(S', 2K_{S'}+H)=0$.
We will concentrate on the case when $(K_{S'}+H)^2 > 0$.

\begin{prop}
\label{p:empty-2adj}
Let $S,S',H$ be as in Section~\ref{ssec:setup},
and assume that $q>0$, 
$h^0(S',K_{S'}+H) >0$,
$h^0(S',2K_{S'}+H)=0$, and
$(K_{S'}+H)^2 \geq 5$.
Then $\mu = 3$.
Moreover, any curve in $|K_{S'}+H|$ consists of a smooth, irreducible,
curve of genus $q$, plus possibly curves contained in the fibres of
the Albanese map.
\end{prop}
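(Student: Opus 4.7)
The plan is to first establish $\mu \geq 3$, then rule out $\mu \geq 4$ via a genus analysis on a general $D \in |K_{S'}+H|$, and finally read off the structure of $D$ from $\mu = 3$.

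The lower bound $\mu \geq 3$ is straightforward. Lemma \ref{grau} with $m=1$, applied to the Albanese pencil $\{G\}$ (whose general fibre is rational of self-intersection zero, since $q>0$ forces $S'$ to be ruled), yields $\mu \geq 2$; and $\mu = 2$ would give $(K_{S'}+H)^2 = 0$ by Lemma \ref{lem:uno}, contradicting $(K_{S'}+H)^2 \geq 5$.

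For the main argument, fix $D \in |K_{S'}+H|$. By Proposition \ref{lem:due}, \eqref{due-iv}, the hypothesis $h^0(S', 2K_{S'}+H) = 0$ gives $p_a(D) = q$. Since $K_{S'}+H$ is big and nef (Proposition \ref{nef}, (i)), $D$ is $1$-connected by Lemma \ref{l:1-conn}, so $h^0(D, \omega_D) = p_a(D) = q$. Decompose $D = D_h + D_v$ with $D_v$ the part supported in Albanese fibres; since $D^2 \geq 5 > 0$, one has $D_h \neq 0$, and $\alb$ restricts to a surjection $D_h \to \Gamma$ of degree $n := \mu - 2 \geq 1$. For each irreducible component $\theta$ of $D_h$ with $G$-degree $n_\theta \geq 1$, Riemann--Hurwitz on the normalization $\tilde\theta \to \Gamma$ gives $p_g(\tilde\theta) \geq n_\theta(q-1) + 1$. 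Combining these bounds with the identity
\[
  p_a(D) = p_a(D_h) + p_a(D_v) - 1 + D_h \cdot D_v,
\]
with $1$-connectedness ($D_h \cdot D_v \geq 1$ when $D_v \neq 0$), and with the rationality of the components of $D_v$, I intend to show that when $q \geq 2$ the only way to attain $p_a(D) = q$ is for $D_h$ to be a single smooth component of genus $q$ with $n = 1$, forcing $\mu = 3$.

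The main obstacle is the case $q = 1$, in which the Riemann--Hurwitz bound $p_g(\tilde\theta) \geq 1$ is vacuous and étale covers of the elliptic curve $\Gamma$ are a priori allowed. For this case, I would exploit that $(K_{S'}+H)^2 = g - 1 \geq 5$ forces $g \geq 6$, hence $H^2 = d \geq 2g - 1 \geq 11$, so Proposition \ref{lbirational} guarantees that $\varphi := \varphi_{|K_{S'}+H|}$ is birational onto its image. The strategy is then to analyse $\varphi(G)$, a rational curve of degree $\mu - 2$ in $\varphi(S')$, and to use that $|K_{S'}+H|$ is not composed with a pencil (Theorem \ref{thm:npp}) to contradict $\mu \geq 4$. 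Once $\mu = 3$ is established, $D_h \cdot G = 1$ forces $D_h$ to consist of a single horizontal component $\theta_0$ mapping birationally to $\Gamma$; the constraints $p_a(D) = q$ and $1$-connectedness then yield $\theta_0$ smooth of genus $q$ with the remaining components of $D$ confined to Albanese fibres, giving the moreover clause.
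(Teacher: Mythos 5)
Your proposal has a genuine gap, concentrated exactly where the real difficulty lies. The case $q=1$ is not proved: you correctly observe that Riemann--Hurwitz gives nothing there, but the fallback you describe (birationality of $\varphi_{|K_{S'}+H|}$ via Proposition \ref{lbirational}, plus Theorem \ref{thm:npp}) is only a declared strategy, and it is not visible how it contradicts $\mu\geq 4$. Worse, the genus count on a member of $|K_{S'}+H|$ provably cannot close this case: for $q=1$ and $\mu=4$, an irreducible bisection $D$ of the Albanese pencil with $p_a(D)=1$ and $D^2=g-1$ satisfies every constraint you invoke ($p_a(D)=q$, $1$-connectedness, $D\cdot G=\mu-2$, $H\cdot(K_{S'}+H)=2g-2$), so whatever excludes it must come from a different mechanism. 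Even for $q\geq 2$ the combination step is only an intention: $p_a(D_v)$ can be negative for a union of pieces of distinct fibres, and horizontal components may occur with multiplicity, where $p_a(m\theta)=m\,p_a(\theta)+\tfrac{1}{2}m(m-1)\theta^2-(m-1)$ destroys the additivity of your lower bounds when $\theta^2<0$; you have not shown the inequality chain survives these effects.

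The missing idea is the one the paper actually uses, and it explains the hypothesis $(K_{S'}+H)^2\geq 5$, which in your argument only enters as ``$>0$'' (plus $g\geq 6$). The paper applies the Reider-type theorem of Beltrametti--Sommese: since $h^0(S',2K_{S'}+H)=0$ and $(K_{S'}+H)^2\geq 5$, through a \emph{general} point of $S'$ there passes an effective divisor $D$ with $-1\leq (K_{S'}+H)\cdot D-1\leq D^2\leq 0$. The case $D^2=-1$ is excluded because $D$ would be a $(-1)$-divisor through a general point; the case $D^2=0$ forces $(K_{S'}+H)\cdot D=1$, then $K_{S'}\cdot D=-2$ (the alternative $K_{S'}\cdot D=0$ would make $S$ a scroll), so $D$ moves in the Albanese pencil and $H\cdot D=3$. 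This works uniformly in $q$ and needs no genus bookkeeping. Your derivation of $\mu\geq 3$ and your reading of the ``moreover'' clause from $\mu=3$ (one component meeting $G$ once, hence smooth of genus $q$, the rest vertical) are correct and agree with the paper.
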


\begin{proof}
First note that $K_{S'}+H$ is nef by Proposition~\ref{nef}, (i),
hence also big since we assume $(K_{S'}+H)^2 \geq 5$.
By the main theorem 2.1 of \cite{BS},
since $h^0(S', 2K_{S'}+H)=0$ and $(K_{S'}+H)^2 \geq 5$, 
if $x$ is a general point of $S'$, there is an effective divisor $D$
containing $x$ such that
\[
  -1 \leq (K_{S'}+H)\cdot D -1 \leq D^2 \leq 0.
\]
---
If $D^2 = -1$, then
$(K_{S'}+H)\cdot D = 0$.
We claim that $D$ is $1$-connected:
let $A$ and $B$ be non-zero, effective divisors, such that $D=A+B$.
Since  $K_{S'}+H$ is nef, we have 
$(K_{S'}+H)\cdot A = (K_{S'}+H)\cdot B = 0$ hence,
by the index theorem, $A^2<0$ and $B^2<0$. Then $(A+B)^2=-1$ implies
$A \cdot B >0$, which proves the claim.
Now, since $H\cdot D \geq 0$, one has $K_{S'} \cdot D \leq 0$,
and thus $D$ is a $(-1)$-divisor. Since $D$ contains a general point
of $S'$, this is impossible.
\\---
If $D^2 = 0$, then 
$(K_{S'}+H)\cdot D \leq 1$;
In fact, $(K_{S'}+H)\cdot D =1$, since $(K_{S'}+H)\cdot D >0$ by the
index theorem.
Besides, since $D$ passes through a general point, one has
$H\cdot D \geq 1$, hence $K_{S'} \cdot D \leq 0$.
If $K_{S'} \cdot D = 0$, then $H\cdot D = 1$.
Let $D_0$ be an irreducible component of $D$ that is movable.
Then $H\cdot D_0 \geq 1$ and, since $H$ is nef, in fact
$H\cdot D_0 = 1$, and thus $S$ is a scroll, in contradiction with 
$h^0(K_{S'}+H) >0$.
Otherwise, $K_{S'}\cdot D <0$. Since $D$ is $1$-connected by
Lemma~\ref{reider-1con}, $K_{S'}\cdot D =-2$, and $D$ is rational,
hence it moves in the Albanese pencil $\{G\}$.
Then, $H\cdot D = 1 -K_{S'}\cdot D = 3$ implies that
$\mu = H\cdot G = 3$, as we wanted to show. 

Let $A$ be a member of $|K_{S'}+H|$.
One has $A\cdot G = H\cdot G + K\cdot G =1$.
Since $G$ is nef, there is one irreducible component $A_0$ of $A$ such
that $A_0 \cdot G = 1$, and thus $A_0$ is smooth, irreducible, of
genus $q$, whereas $A_i\cdot G=0$ for any other component of $A$.
This proves the assertion.
\end{proof}

\begin{rem}\label{rem:otto}  In the situation of Proposition \ref {p:empty-2adj}, one can have $(K_{S'}+H)^2<5$ only if $q=1$,
and either $g=5$ and $(K_{S'}+H)^2=4$, or $g=4$ and $(K_{S'}+H)^2=3$. 

Indeed, assume $(K_{S'}+H)^2>0, q>0$, $h^0(S', K_{S'}+H)>0$ and $h^0(S', 2K_{S'}+H)=0$ and $(K_{S'}+H)^2<5$.  
By Proposition \ref {lem:due}, \eqref {due-iv} and \eqref  {due-vi}, we have $(K_{S'}+H)^2=g-2+q$ and  by Lemma \ref {bounds}, $g\geq 9q-7$, i.e., $g+q\geq 10q-7$.   Now $(K_{S'}+H)^2<5$  yields $g+q<7$, so  $(K_{S'}+H)^2<5$ can only occur if $q=1$.
Since $g-q\geq 3$ and $g<7-q=6$ we have only the possibilities $g=5$, $(K_{S'}+H)^2=4$ or $g=4$, $(K_{S'}+H)^2=3$.
\end{rem}

\noindent
We shall use the following lemma to give an application to
Proposition~\ref{p:empty-2adj} above.

\begin{lem}
\label{l;empty-2adj}
Let $S,S',H$ be as in Section~\ref{ssec:setup},
and assume that $q>0$, and $\mu=3$.
If $H^2\geq 10$ and $h^0(S', K_{S'}+H)>0$, then $|K_{S'}+H|$ has no base points.
\end{lem}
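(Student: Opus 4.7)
The plan is to assume for contradiction that $|K_{S'}+H|$ has a base point, produce a smooth curve $\theta$ of genus $q$ via Proposition~\ref{connected}, and then apply the Hodge index theorem to the auxiliary divisor $G + \theta$ to contradict $H^2 \geq 10$.

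First I would verify that $(K_{S'}+H)^2 > 0$. Since $h^0(S', K_{S'}+H) > 0$, Proposition~\ref{nef}, (i), gives that $K_{S'}+H$ is nef, hence $(K_{S'}+H)^2 \geq 0$. By the converse part of Lemma~\ref{lem:uno}, the vanishing $(K_{S'}+H)^2 = 0$ would force $\mu = 2$, contradicting $\mu = 3$. This places us in the setting of Proposition~\ref{connected}, which, if $|K_{S'}+H|$ had a base point, would produce a smooth irreducible curve $\theta$ of genus $q$ with $\theta^2 = -1$ and $H \cdot \theta = 0$.

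Next I would argue that $\theta$ cannot be contained in a fibre of the Albanese morphism, so $G \cdot \theta \geq 1$ for the general Albanese fibre $G$. Indeed, since $\kappa(S') = -\infty$ and $q > 0$, the surface $S'$ is birational to a geometrically ruled surface over $\Gamma$, hence all irreducible components of all fibres of $\alb : S' \to \Gamma$ are rational; since $\theta$ is smooth of genus $q \geq 1$, it must dominate $\Gamma$, giving $G \cdot \theta \geq 1$.

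The final step is the Hodge index theorem, applied to $D := G + \theta$. Using $G^2 = 0$, $\theta^2 = -1$, $H \cdot \theta = 0$ and $H \cdot G = \mu = 3$, one gets $D^2 = 2 G \cdot \theta - 1 \geq 1$ and $H \cdot D = 3$. Since $H$ is big and nef with $H^2 \geq 10$, the Hodge index theorem yields $H^2 \cdot D^2 \leq (H \cdot D)^2 = 9$, hence $H^2 \leq 9$, contradicting $H^2 \geq 10$. The essential point is the choice of auxiliary divisor: $G$ alone has $G^2 = 0$, so adding $\theta$ is what makes $D^2 > 0$ while keeping $H \cdot D$ equal to $\mu = 3$. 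I do not anticipate any serious obstacle beyond this choice, since Proposition~\ref{connected} does almost all of the structural work.
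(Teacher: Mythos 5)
Your proposal is correct and follows essentially the same route as the paper: invoke Proposition~\ref{connected} to produce the genus-$q$ curve $\theta$ with $\theta^2=-1$ and $H\cdot\theta=0$, note $G\cdot\theta\geq 1$, and apply the index theorem to $G+\theta$ to force $H^2\leq \mu^2=9$. Your explicit verification that $(K_{S'}+H)^2>0$ via Lemma~\ref{lem:uno} is a small but welcome addition, since that hypothesis of Proposition~\ref{connected} is left implicit in the paper's proof.
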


\begin{proof}
We prove the contrapositive.
By Proposition~\ref{prop:bpf},
if $K+H$ has base points, then
there is a genus $q$ curve $\theta$ such that $\theta^2=-1$, $\theta \cdot
H=0$.
Let $\{G\}$ be the Albanese pencil, and let
$\alpha = \theta\cdot G$.
Since $\theta$ has genus $q$,
one has $\alpha \geq 1$. 
Then the divisor $B:=G+\theta$ satisfies $B^2=2\alpha-1\geq 1$ and
$H\cdot B=\mu $. By the index theorem,
one has $H^2B^2\leq (H\cdot B)^2=\mu^2$,
hence $H^2 \leq 9$.
\end{proof}

\begin{cor}
Let $S,S',H$ be as in Section~\ref{ssec:setup},
and assume that $q>0$,
$h^0(S',K_{S'}+H)>0$, and
$h^0(S',2K_{S'}+H)=0$.
If $(K_{S'}+H)^2>0$ and  $H^2\geq 10$, 
then  $|K_{S'}+H|$ is base point free and determines a birational map, and  the image of $S'$ by the adjoint series $|K_{S'}+H|$
has degree $g-2+q$ in $\P^{g-q-1}$.
\end{cor}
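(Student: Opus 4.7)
My plan is to assemble the three immediately preceding results together with the structural propositions \ref{qd}, \ref{lem:due}, and Theorem \ref{thm:npp}.

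First, by Proposition \ref{qd}, \eqref{qd-lem:adj}, one has $h^0(S',K_{S'}+H) = g-q$, so the linear system $|K_{S'}+H|$ defines a rational map into $\P^{g-q-1}$; by Theorem \ref{thm:npp}, it is not composed with a pencil, so its image is a surface. The hypothesis $h^0(S',2K_{S'}+H)=0$ combined with Proposition \ref{lem:due}, \eqref{due-iv} yields $p_a(K_{S'}+H)=q$, and then Proposition \ref{lem:due}, \eqref{due-vi} gives
\[
  (K_{S'}+H)^2 = g-2+q.
\]

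To deduce that the image has degree exactly $g-2+q$, I would show that the adjoint map is a birational morphism, so that the image degree equals $(K_{S'}+H)^2$. Birationality is given by Proposition \ref{lbirational}, whose hypotheses ($h^0(K_{S'}+H)>0$, $(K_{S'}+H)^2>0$, $H^2\geq 10$) all hold. For base-point-freeness, I would invoke Proposition \ref{p:empty-2adj} to force $\mu=H\cdot G=3$, and then Lemma \ref{l;empty-2adj}, which under the hypotheses $\mu=3$, $H^2\geq 10$, and $h^0(K_{S'}+H)>0$ rules out base points. Once the map is a birational morphism, the degree of the image in $\P^{g-q-1}$ is simply $(K_{S'}+H)^2=g-2+q$, and no correction term arises from base points.

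The main obstacle is that Proposition \ref{p:empty-2adj} carries the additional hypothesis $(K_{S'}+H)^2 \geq 5$. Combining Theorem \ref{thm:npp} ($g-q \geq 3$) and Lemma \ref{bounds} ($g\geq 9q-7$) with the formula $(K_{S'}+H)^2=g-2+q$, one sees that for $q\geq 2$ one has $g\geq 11$ and so $(K_{S'}+H)^2\geq 11$, while for $q=1$ the only numerical possibilities failing $(K_{S'}+H)^2\geq 5$ are $g=4$ (which is excluded by $d\geq 10$ since it forces $d=K_{S'}^2+9\leq 9$) and $g=5$, $(K_{S'}+H)^2=4$, $d=K_{S'}^2+12\in\{10,11,12\}$. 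This single residual case needs ad hoc treatment: I would combine Proposition \ref{prop:bpf} (which tells us that the only obstruction to base-point-freeness is the presence of an elliptic curve $\theta$ with $\theta^2=-1$ and $H\cdot\theta=0$) with the index-theoretic argument of Lemma \ref{l;empty-2adj} applied to the effective divisor $G+\theta$, to conclude both $\mu=3$ and the absence of base points, which is where the bulk of the care will lie.
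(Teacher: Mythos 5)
Your main line is exactly the paper's: Proposition~\ref{lbirational} for birationality of the adjoint map, Proposition~\ref{p:empty-2adj} together with Lemma~\ref{l;empty-2adj} for base-point-freeness, and Proposition~\ref{lem:due}, \eqref{due-iv} and \eqref{due-vi}, for the identity $(K_{S'}+H)^2=g-2+q$. You deserve credit for noticing that Proposition~\ref{p:empty-2adj} carries the extra hypothesis $(K_{S'}+H)^2\geq 5$, which the corollary's hypotheses do not obviously supply; the paper's own one-line proof invokes that proposition without comment, and your numerical reduction (via Theorem~\ref{thm:npp}, Lemma~\ref{bounds}, the formula $(K_{S'}+H)^2=g-2+q$, and $d\geq 10$) to the single residual case $q=1$, $g=5$, $(K_{S'}+H)^2=4$, $d=K_{S'}^2+12\in\{10,11,12\}$ is correct.

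However, your proposed treatment of that residual case does not close as written. The index-theoretic argument of Lemma~\ref{l;empty-2adj} runs as follows: if $|K_{S'}+H|$ had a base point, Proposition~\ref{prop:bpf} would produce a genus-$q$ curve $\theta$ with $\theta^2=-1$ and $H\cdot\theta=0$, and then $B=G+\theta$ satisfies $B^2=2(\theta\cdot G)-1\geq 1$ and $H\cdot B=\mu$, whence $H^2\leq\mu^2$ by the index theorem. With $H^2\geq 10$ this yields $\mu\geq 4$ --- not a contradiction, and certainly not the conclusion ``$\mu=3$'': the contradiction in the paper's lemma comes from the \emph{a priori} equality $\mu=3$, which is precisely what Proposition~\ref{p:empty-2adj} supplies and what is unavailable when $(K_{S'}+H)^2=4$. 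So in the residual case you still need an independent upper bound on $\mu$ (for instance Hartshorne's inequality $d\leq\frac{2\mu}{\mu-1}(g-1)$ from \cite{Ha}, which for $g=5$ and $d\geq 10$ forces $\mu\leq 5$, and $d=10$ whenever $\mu\geq 4$), followed by a genuine case analysis to exclude $\mu\in\{4,5\}$ or to rule out base points directly. As it stands, this step is a declared intention rather than a proof --- though it is only fair to note that the paper's own proof is silent on the same point.
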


Let us point out that $g-2+q$
is the maximal possible degree for a
surface in $\P^{g-q-1}$ with sectional genus $q$ equal to the
irregularity, see Proposition~\ref{qd}, \eqref{qd-iii}.

\begin{proof}
By Proposition~\ref{lbirational},
the linear system $|K_{S'}+H|$ defines a birational map.  Suppose first that $(K_{S'}+H)^2< 5$. Then, by Remark \ref {rem:otto}, the only possibilities are
$$
\text{either}\  (q,g,(K_{S'}+H)^2) =(1,4,3),
\quad \text{or} \ (q,g,(K_{S'}+H)^2) =(1,5,4).
$$

The former case is impossible. Indeed, since $q=1$, we have $K_{S'}^2\leq 0$. Then from the identity
$$
(K_{S'}+H)^2=K_{S'}^2 +4g-4-H^2
$$
we have $H^2=K_{S'}^2 +9\leq 9$, that is a contradiction. 

In the latter case one has $\dim (|K_{S'}+H|)=3$, and the map defined by $|K_{S'}+H|$  is birational onto its image $\Sigma$ that has degree $s\leq 4$, being $s<4$ only if $|K_{S'}+H|$ has base points. On the other hand, since $\Sigma$ is birational to $S'$, hence has irregularity 1, one has $s\geq 3$. If $s=4$, then the statement of the lemma holds. If $s=3$, then $\Sigma$ must be a cubic cone, and then one has $\mu=3$. Then 
Lemma \ref {l;empty-2adj} implies that $|K_{S'}+H|$ is base point free, and we have a contradiction. 

Finally we may assume $(K_{S'}+H)^2\geq 5$. Then   
by Proposition~\ref{p:empty-2adj}
and Lemma~\ref{l;empty-2adj}, $|K_{S'}+H|$
is base-point-free. 
Therefore, the image of $S'$ by this map is a degree $(K_{S'}+H)^2$
surface in $\P^{g-q-1}$ (see Proposition~\ref{qd}, \eqref{qd-lem:adj}).
By Proposition~\ref{lem:due}, \eqref{due-iv},
it has sectional genus $q$,
and by Proposition~\ref{lem:due}, \eqref{due-vi}, it has degree
$g-2+q$.
\end{proof}

\section{An extension of a theorem of C. Segre}
\label{S:supraCSegre}

In this section we continue our study of irrational surfaces as in
Section~\ref{ssec:setup} (in fact we consider slightly more
restrictive hypotheses).
The main result of this section is the following generalization of
C.~Segre's classical theorem  \cite[Thm. 2.3]{CD} mentioned in the
introduction. We shall apply it in particular to get
Corollary~\ref{cor:irr}.

\begin{thm}\label{thm:segre}
Let $C \subset \P^r$ be a (smooth) linearly normal, non--degenerate, projective curve of
genus $g$ and degree $d\geq 2g+5$ (so $r=d-g$).
If $C$ is a hyperplane section of an irregular surface $S \subset \P^{n}$, with $n=r+1$, ruled
in conics, then $S$ is either the 2-Veronese re-embedding of a cone or a simple internal projection thereof.
\end{thm}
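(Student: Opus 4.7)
The plan is to show that all conics of the ruling of $S$ share a common point $p\in\P^n$, and then to invoke C.~Segre's classical theorem (see \cite[Thm.~2.3]{CD}) on the projection of $S$ from $p$. The setup is as follows. Let $\alb:S'\to\Gamma$ be the Albanese fibration, with $\Gamma$ of genus $q>0$; since $S$ is ruled by conics, the general fiber $G$ of $\alb$ satisfies $H\cdot G=2$. A general hyperplane intersects each conic $Q_t$ in two points, yielding the hyperplane section $C\subset S$ together with a degree-$2$ map $\phi:C\to\Gamma$ and its covering involution $\iota$. Since $\sO_C(1)$ is $\iota$-fixed, the pushforward $\phi_*\sO_C(1)$ decomposes into $\iota$-eigen-line-bundles $M_+\oplus M_-$ on $\Gamma$. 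The standard sequence $0\to\sO_{S'}\to\sO_{S'}(H)\to\sO_C(1)\to 0$ pushes down through $\alb$ to yield
\begin{equation*}
  0\to\sO_\Gamma\to \alb_*\sO_{S'}(H)\to \phi_*\sO_C(1)=M_+\oplus M_-\to 0,
\end{equation*}
and linear normality of $C$ gives $h^0(\Gamma,M_+)+h^0(\Gamma,M_-)=d-g+1$.

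The critical step is cohomological: the numerical bound $d\geq 2g+5$, combined with Clifford-type estimates on $\Gamma$, forces $h^0(\Gamma,M_-)=1$. The unique $\iota$-anti-invariant section corresponds to a hyperplane $\Pi_-\subset\P^n$ whose intersection with each conic $Q_t$ is a $0$-cycle of degree $2$ supported at a single point, this point being a fixed $p\in\P^n$ independent of $t\in\Gamma$. Equivalently, the sub-sheaf $\sO_\Gamma\hookrightarrow \alb_*\sO_{S'}(H)$ corresponds to a section of the projective scroll $\P(\alb_*\sO_{S'}(H))\to\Gamma$ that is contracted to $p$ by the linear system realizing the embedding of the scroll in $\P^n$.

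Since all conics pass through $p$, projecting from $p$ sends each $Q_t$ to a line, and the image $\bar S\subset\P^{n-1}$ is a scroll ruled in lines over $\Gamma$, of sectional genus $q$, with linearly normal hyperplane sections inherited from those of $S$. By C.~Segre's theorem, $\bar S$ is a cone $\hat C_0$ over a smooth curve $C_0\subset\P^{n-2}$ of genus $q$. Finally, $H$ restricts on each fiber of $\alb$ to $\sO_{\P^1}(2)$, which is the square of the pullback of the degree-$1$ bundle on $C_0$; hence $S$ is identified with the 2-Veronese re-embedding of $\hat C_0$, possibly composed with a simple internal projection to match the ambient dimension $n$.

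The main obstacle is the cohomological step: establishing $h^0(\Gamma,M_-)=1$ under the bound $d\geq 2g+5$, and the geometric identification of the common point $p$ from this single anti-invariant section. The bound must enter sharply here through a Clifford-type estimate on $\Gamma$, combined with the branch data of $\phi$ (whose ramification degree is $2g+2-4q$ by Riemann--Hurwitz).
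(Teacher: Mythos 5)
Your strategy --- locate a point common to all the conics of the ruling and reduce to C.~Segre's theorem by projecting from it --- is a reasonable geometric vision (in the statement's model that point is the image of the vertex of the cone under the $2$-Veronese map), and it is genuinely different from the paper's argument, which never leaves the relatively minimal model $S''=\P(\sE)$ of $S'$ over $\Gamma$: there one writes $\bar C\lineq 2E+\gamma^*(A)$, identifies $\gamma_*\O_{S''}(\bar C)$ with $\Sym^2(\sE)\otimes\sA$, filters it with quotients $\sA$, $\sA\otimes\sL$, $\sA\otimes\sL^{\otimes2}$, and shows that linear normality of $C$ together with $d\geq 2g+5$ (which translates exactly into non-speciality of $\sA\otimes\sL$) forces $\sA\cong\sL^{\otimes-2}$, so that $\bar C\in|\O_{\P(\sE\otimes\sL^{-1})}(2)|$; a splitting criterion then exhibits $S''$ as mapping onto a cone. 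As written, however, your proposal has gaps I do not see how to close along the lines you indicate.

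First, the eigen-decomposition $\phi_*\O_C(1)=M_+\oplus M_-$ presupposes that $\O_C(1)$ is $\iota$-invariant, and this is not automatic: writing $\restr H C\lineq 2\restr E C+\phi^*(A)$ on $S''$ and using $D+\iota^*D\lineq\phi^*\phi_*D$, the invariance amounts to $\restr E C-\iota^*\restr E C$ being $2$-torsion in $\Pic^0(C)$, i.e.\ essentially to $2\restr H C$ being a pull-back from $\Gamma$ --- a statement very close to the conclusion you are trying to prove. For a general member of $|2E+\gamma^*(A)|$ the rank-two bundle $\phi_*\O_C(1)$ admits no such splitting, so the very first structural step begs the question. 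Second, the step you yourself flag as the crux ($h^0(\Gamma,M_-)=1$ and the extraction of the common point $p$) is both unproved and, as described, geometrically off: an anti-invariant section of $\O_C(1)$ has $\iota$-invariant zero divisor, hence vanishes at either both or neither of the two points of $C\cap Q_t$ (and at all ramification points); it does not produce a hyperplane tangent to every conic at one and the same point. Moreover a section of $H^0(C,\O_C(1))$ determines a hyperplane of $\P^r=\langle C\rangle$, not of $\P^n$, so the locus you call $\Pi_-$ has codimension two in $\P^n$ and meets a general conic of the ruling in the empty set. Finally, even granting the common point, you would still need to verify that the projection of $S$ from $p$ has linearly normal hyperplane sections of positive genus before Segre's theorem applies, and that $H$ is globally --- not merely fibrewise --- twice a pull-back from the cone. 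I recommend abandoning the involution-theoretic decomposition and instead exploiting the filtration of $\Sym^2(\sE)\otimes\sA$ induced by $0\to\O\to\sE\to\sL\to 0$, where the hypothesis $d\geq 2g+5$ enters cleanly.
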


Let $S \subset \P^n$ be a surface having $C$ as a hyperplane section
as in the theorem. It satisfies the assumptions of Section
\ref{ssec:setup}, and we will use the notation introduced there.
In particular, we let $\pi:S'\to S$ be the minimal desingularization
of $S$.
By abuse of notation we will denote by $C$ the proper transform of $C$ on
$S'$, that is isomorphic to $C$.
By assumption, the fibres of the Albanese map $S'\to \Gamma$
are mapped by $\pi$ to conics sweeping out $S$.  

We let $\gamma:S''\to \Gamma$ be a relative minimal model
of $S'$ so that there is a birational morphism $h: S'\longrightarrow
S''$ such that $\alb=\gamma\circ h$. Let $\bar C$ be the image of
$C$ via $h$. 

\begin{lem} \label{lem:cdr} The curve $\bar C$ is smooth. \end{lem}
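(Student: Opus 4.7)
The plan is to argue by contradiction. Assume $\bar C$ has a singular point $\bar p$, and set $t := \gamma(\bar p) \in \Gamma$. Since $h$ is a birational morphism between smooth surfaces that factors through the Albanese map, I decompose the fibre $F_t := \alb^{-1}(t) \subset S'$ as $F_t = R_t + E_t$, where $R_t$ is the unique component mapping isomorphically onto $\gamma^{-1}(t) \cong \P^1$ via $h$, and $E_t = h^{-1}(\bar p) \cap F_t$ is the (non-empty) tree of rational curves contracted by $h$ to $\bar p$. Writing $h$ as a composition of blowdowns of $(-1)$-curves and denoting by $m_i$ the multiplicity at the $i$-th centre of the image of $C$, a standard computation via the adjunction and projection formulas gives
\[
  p_a(\bar C) = g + \sum_i \binom{m_i}{2},
\]
so $\bar C$ being singular is equivalent to some $m_i \geq 2$.

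The key local consequence I would extract is the following: since the restriction $h|_C \colon C \to \bar C$ is birational (as $C$ is not contracted, having $C^2 = d > 0$) and $\bar C$ has multiplicity at least $2$ at $\bar p$, the length of the scheme $h|_C^{-1}(\bar p)$ is at least $2$. Because the fibres of $\alb$ are mapped to conics, $H \cdot F_t = 2$, hence $C \cdot F_t = 2$. Combining these facts forces $C \cdot R_t = 0$ and $C \cdot E_t = 2$, so the whole intersection of the hyperplane section $C$ with $F_t$ is concentrated in the contracted locus $E_t$.

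Next I would globalize this by applying $\pi$. The conic $\pi(F_t) \subset S \subset \P^n$ meets the hyperplane cutting out $C$ in two points, and both lie on $\pi(E_t)$; since $E_t$ is a connected tree of rational curves contracted by $h$, $\pi(E_t)$ is either a single point of $S$ or a union of lines meeting the generic hyperplane in a single point---a very constrained projective configuration. Varying the hyperplane (so varying $C \in |H|$) and using that the conics of $S$ are exactly the images of Albanese fibres, the same constraint must be produced for infinitely many fibres. I would then use C.~Segre's theorem \cite[Thm.~2.3]{CD} (or Corollary~\ref{cor:top}) applied to the projection of $S$ from the locus obtained in this way, to deduce a bound on $d$ in terms of $g$ incompatible with $d \geq 2g + 5$.

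The main obstacle is the last step: translating the purely local statement that $\bar C$ acquires a singular point into a global projective contradiction with $d \geq 2g+5$. The local computation shows that a singularity of $\bar C$ corresponds to a single degenerate fibre of $\alb$ where $C$ meets only the contracted part; turning this into a statement about $S$ that rules out large $d$ requires a spreading-out argument over $\Gamma$, using that the generic curve $C \in |H|$ avoids very special configurations. I expect this to mirror the proof scheme of Segre's theorem for scrolls, which here must be adapted to the conic-bundle situation via the relatively minimal model $\gamma \colon S'' \to \Gamma$.
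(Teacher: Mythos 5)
Your local analysis points in a reasonable direction, but the proof does not close, and the route you propose for closing it would not work. The paper's own proof is entirely local and never uses the hypothesis $d\geq 2g+5$: the only global input is that $\pi\colon S'\to S$ is the \emph{minimal} desingularization, so every $(-1)$-curve $E$ on $S'$ satisfies $H\cdot E\geq 1$, hence $C\cdot E\geq 1$. Combined with $C\cdot F_t=2$ and Zariski's lemma, this forces every reducible fibre of $\alb$ to contain either exactly two $(-1)$-curves, each met by $C$ in one point, or a single $(-1)$-curve of multiplicity $2$, met by $C$ in one point (a fibre cannot contain a single $(-1)$-curve of multiplicity $1$, since its residual would be a $(-1)$-divisor and hence would contain another $(-1)$-curve). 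Knowing that $C$ meets each successively contracted $(-1)$-curve transversally in one point is exactly what is needed to conclude that $\bar C$ is smooth.

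By contrast, your concluding step rests on a false premise: a singular point $\bar p$ of $\bar C$ constrains only the single fibre $F_{\gamma(\bar p)}$, and there is no mechanism that ``produces the same constraint for infinitely many fibres''. Varying the hyperplane does not help, since $C\cdot R_t=H\cdot R_t$ is independent of the member of $|H|$ chosen and the degenerate fibres of $\alb$ are finite in number; so no spreading-out over $\Gamma$ is available, and there is no auxiliary projection of $S$ to which Segre's theorem or Corollary~\ref{cor:top} could be applied to contradict $d\geq 2g+5$ (the lemma is in fact true without that hypothesis). There is also a smaller inaccuracy in the local step: from $\mult_{\bar p}(\bar C)\geq 2$ one deduces that $C$ has intersection number at least $2$ with the \emph{pullback} of the first exceptional divisor over $\bar p$ (an effective divisor supported on $E_t$ with coefficients $\geq 1$), which does give $C\cdot R_t=0$; but the intersection with the \emph{reduced} tree $E_t$ can equal $1$ (for instance for a cusp resolved by three blow-ups), so the assertion $C\cdot E_t=2$ is not justified as stated. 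To finish along your lines you would still have to invoke the minimality of the desingularization to exclude the configuration $H\cdot R_t=0$ together with the distribution of $(-1)$-curves in the fibre --- which is precisely the paper's argument, rendering the detour through $d\geq 2g+5$ unnecessary.
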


\begin{proof} To prove this it suffices to show that $C$, which intersects positively any $(-1)$--curve, intersects any $(-1)$--curve in exactly one point. Let $E$ be a $(-1)$--curve. It is contained in a fibre of $\alb$. On the other hand any reducible fibre of $\alb$ either contains only one $(-1)$--curve with multiplicity 2 met by $C$ in one point, or it contains exactly two distinct $(-1)$--curves that are met by $C$ in one point (remember that  the intersection number of $C$ with the fibres of $\alb$ is 2). Indeed, since $C$ intersects any $(-1)$--curve positively, and since the intersection number of $C$ with the fibres of $\alb$ is 2, any such  fibre cannot contain more than two distinct $(-1)$--curves, and if it contains a $(-1)$--curve with multiplicity, it contains only that $(-1)$--curve with multiplicity 2. To finish the proof it suffices to show that a fibre of $\alb$ cannot contain only one $(-1)$--curve with multiplicity 1. In fact if $E$ is such a $(-1)$--curve, then its residual with respect to the fibre of $\alb$ in which it sits is a $(-1)$--divisor by Zariski's lemma, and therefore it must contain another $(-1)$--curve.\end{proof}


Next, $S$ is the image of $S''$ via a linear subsystem of
$|\bar C|$ which may have some simple base points. In that case,
we can replace $S$ with the image of $S''$ via $\varphi_{|\bar C|}$,
of which $S$ will be an internal projection. Note that the sectional
genus $g$ is not affected by this operation, whereas the degree may
increase, but the hyperplane sections stay linearly normal. So from
now on we may and will assume that $S$ is the image of $S''$ via the
complete linear system $|\bar C|$.

Write $S'' = \P(\sE)$, where $\sE$ is a normalized rank $2$ vector
bundle of degree $-e$. There exists an invertible sheaf $\sL$ on
$\Gamma$ and an exact sequence
\[
  0 \to \O \to \sE \to \sL \to 0
\]
with $\deg(\sL)=\deg(\sE)=-e$
(see \cite [p. 372, proof of Prop. 2.8]{hartshorne}).
Since $C$ is a hyperplane section of 
$S$, and the latter is swept out by conics, there exists an invertible
sheaf $\sA$ on $\Gamma$ such that 
\[
  \bar C \sim 2E+\gamma^* (A),
\]
where $E \in |\O_{\P(\sE)}(1)|$ and $A \in |\sA|$.
\def\fa{\alpha}
Let $\fa$ be the degree of $\sA$.

\begin{lem}
\label{l:cones_relations}
The following two relations hold,
\begin{align*}
  g &= 2q-1+\fa-e \\
  d &=  4g+4-8q.
\end{align*}
\end{lem}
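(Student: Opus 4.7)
The plan is to do a direct intersection-theoretic computation on the relatively minimal model $S''=\P(\sE)$, using the class $\bar C \sim 2E+\gamma^*(A)$ together with the standard formulas $E^{2}=-e$, $E\cdot F=1$, $F^{2}=0$, where $F$ denotes the class of a fibre of $\gamma$.

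For the first relation I would apply adjunction to $\bar C$ on $S''$. The canonical class of a $\P^{1}$-bundle gives
\[
K_{S''} \sim -2E + \gamma^{*}\!\bigl(K_{\Gamma}+\det\sE\bigr),
\]
so numerically $K_{S''}\equiv -2E+(2q-2-e)F$. Since $\bar C$ is smooth (Lemma~\ref{lem:cdr}) of genus $g$ (it is isomorphic to the smooth hyperplane section $C$), adjunction yields
\[
2g-2 \;=\; \bar C\cdot(\bar C+K_{S''}) \;=\; (2E+\fa F)\cdot\bigl(\fa+2q-2-e\bigr)F \;=\; 2(\fa+2q-2-e),
\]
because $E\cdot F=1$ and $F^{2}=0$. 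Dividing by $2$ gives $g=2q-1+\fa-e$, as claimed.

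For the second relation, note that by the reduction made before the lemma, $S$ is the image of $S''$ under the morphism defined by the complete linear system $|\bar C|$, which is birational onto its image. Hence $d=\bar C^{2}$. Computing on $S''$,
\[
\bar C^{2} \;=\; (2E+\fa F)^{2} \;=\; 4E^{2}+4\fa\, E\cdot F+\fa^{2} F^{2} \;=\; -4e+4\fa \;=\; 4(\fa-e).
\]
Substituting $\fa-e=g+1-2q$ from the first relation yields $d=4g+4-8q$, proving the second.

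There is really no hard step here: the only points to be careful about are (i) using the correct canonical class formula on the ruled surface $S''$ with a normalized $\sE$, and (ii) justifying that the degree of $S$ equals $\bar C^{2}$, which is ensured by the replacement of $S$ with the image of $|\bar C|$ made just before the statement. Once these are in place the computation is immediate.
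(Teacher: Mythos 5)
Your proposal is correct and follows essentially the same route as the paper: adjunction on $S''$ with $K_{S''}\equiv -2E+(2q-2-e)F$ and $\bar C\equiv 2E+\fa F$ for the genus, and $\bar C^2=4(\fa-e)$ combined with the first relation for the degree. The two points you flag as needing care (the canonical class of the normalized ruled surface, and $d=\bar C^2$ after the replacement of $S$ by the image of $|\bar C|$) are exactly the ones the paper relies on.
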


\begin{proof}
We have
\[
  K_{S''} \sim -2E + \gamma^*(K_\Gamma+\sL)
  \]
(see \cite [p. 373, Lemma 2.10]{hartshorne}),
hence
\[
  K_{S''} \equiv -2E+(2q-2-e)G
  \quad
  \text{and}
  \quad
  \bar C \equiv 2E+\fa G,
\]
where $G$ denotes the numerical equivalence class of the fibres of
$\gamma$.
Then one computes
\[
  2g-2=(K_{S''}+\bar C)\cdot \bar C 
  = 2(2q-2-e+\fa)
\]
and
\[
 d=  \bar C^2 
  = 4(\fa-e),
\]
as wanted. \end{proof}


\noindent
The following observation is the keystone of our proof of  Theorem
\ref {thm:segre}.

\begin{lem}
\label{l:AtL_non-spec}
The following relation holds:
\[
  d - 2(g-1)
  =
  2 \bigl[
  \fa-e - 2(q-1)
  \bigr].
\]
Thus,
\[
  d\geq 2g+5
  \quad \iff
  \quad
  \fa-e \geq 2q+2.
\]

\end{lem}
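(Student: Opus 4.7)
The plan is to deduce the displayed identity directly from the two formulas established in Lemma \ref{l:cones_relations} (or more precisely from the intermediate relations $d=4(\alpha-e)$ and $g=2q-1+\alpha-e$ derived in its proof), by a routine algebraic rearrangement. Writing
\[
  d - 2(g-1)
  = 4(\alpha-e) - 2(2q-2+\alpha-e)
  = 2(\alpha-e) - 4(q-1),
\]
which is precisely $2[\alpha-e-2(q-1)]$, yields the first claim. There is nothing delicate here; it is just bookkeeping.

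For the equivalence $d \geq 2g+5 \iff \alpha-e \geq 2q+2$, the one small observation is a parity argument. The identity just proved shows that $d - 2(g-1)$ is an even integer; in particular $d$ is even (which is transparent from $d = 4(\alpha-e)$). Since the bound $2g+5$ has opposite parity, the inequality $d\geq 2g+5$ is equivalent to $d \geq 2g+6$, i.e.\ $d-2(g-1) \geq 8$. Dividing by $2$ and rearranging gives $\alpha - e \geq 2q+2$, as required.

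There is no real obstacle: the entire statement is a formal consequence of Lemma \ref{l:cones_relations}. The only point that deserves explicit mention is the parity upgrade from $2g+5$ to $2g+6$, since the asymmetric-looking ``$+5$'' on one side versus ``$+2$'' on the other might otherwise be confusing.
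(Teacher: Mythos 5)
Your proof is correct. It is worth noting that the paper itself acknowledges your route — a direct algebraic rearrangement of the two relations $d=4(\alpha-e)$ and $g=2q-1+\alpha-e$ from Lemma~\ref{l:cones_relations} — as a valid option, but then prefers a different one: it observes that $d-2(g-1)$ is exactly $-K_{S''}\cdot \bar C$ and computes that intersection number directly from $K_{S''}\equiv -2E+(2q-2-e)F$ and $\bar C\equiv 2E+\alpha F$. The paper's computation is more intrinsic (it explains \emph{why} the left-hand side organizes itself into $2[\alpha-e-2(q-1)]$, namely as an anticanonical degree), while yours is pure bookkeeping; both yield the same identity with the same amount of work. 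A small point in your favour: you make explicit the parity upgrade from $d\geq 2g+5$ to $d\geq 2g+6$ needed for the stated equivalence (the left-hand side being even while $2g+5-2(g-1)=7$ is odd), which the paper leaves entirely tacit after its ``Thus''. Your argument there is right: evenness of $d-2(g-1)$ turns $\geq 7$ into $\geq 8$, and dividing by $2$ gives $\alpha-e-2(q-1)\geq 4$, i.e.\ $\alpha-e\geq 2q+2$.
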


\begin{proof}
This can be proved by using  the two relations
stated in the previous lemma.
Yet we find it more satisfactory to observe that the quantity on the
left-hand-side is $ -K_{S''}\cdot \bar C$,
and then 
\begin{align*}
 d - 2(g-1)= 
  -K_{S''}\cdot \bar C
  &= \bigl(2E-(2q-2-e)F \bigr)
    \cdot \bigl( 2E+\fa F \bigr)
  \\
  &= 2(\fa-e) - 2(2q-2).
\end{align*}
\end{proof}

\begin{proof}[Proof of Theorem \ref {thm:segre}]
 Since $C \subset \P^r$ is
linearly normal, one has
\begin{align*}
  h^0(S'',\O_{S''}(\bar C))-1 
  &= d-g+1 \\
  &= 3(\fa-e)-2(q-1).
\end{align*}
For $i=0,1$, one has
\[
  H^i(S'',\O_{S''}(\bar C))
  \cong H^i(\Gamma, \gamma_* (\O_{S''}(\bar C))
  = H^i(\Gamma, (\Sym^2 (\sE)) \otimes \sA).
\]
Moreover, there exists a locally free sheaf $\sQ$ such that we have
two exact sequences
\[
  0 \to \O \to \Sym^2 (\sE) \to \sQ \to 0
  \qquad \text{and}
  \qquad
  0 \to \sL \to \sQ \to \sL^{\otimes 2} \to 0,
\]
hence also
\[
  0 \to \sA \to \Sym^2 (\sE) \otimes \sA \to \sQ \otimes \sA \to 0
  \qquad \text{and}
  \qquad
  0 \to \sL \otimes \sA \to \sQ \otimes \sA \to \sL^{\otimes 2}
  \otimes \sA \to 0.
\]
One has
\begin{align*}
  \deg(\sA) &= \fa \\
  \deg(\sA\otimes \sL) &= \fa-e \\
  \deg(\sA\otimes \sL^{\otimes 2}) &= \fa-2e.
\end{align*}
Since we are assuming $d\geq 2g+5$, we have $\fa-e \geq 2q+2$ by
Lemma~\ref{l:AtL_non-spec},
hence $\sA\otimes \sL$ is non-special.
Taking this into account, we have the two exact sequences
\[
  0 \to H^0(\Gamma, \sA) \to H^0(\Gamma, \Sym^2 (\sE) \otimes \sA) \to
  H^0(\Gamma, \sQ \otimes \sA) \to H^1(\Gamma, \sA)
\]
and
\[
  0 \to H^0(\Gamma, \sL \otimes \sA) \to H^0(\Gamma, \sQ \otimes \sA) \to
  H^0(\Gamma, \sL^{\otimes 2} \otimes \sA) \to 0,
\]
so that
\[
  h^0(S'',  \O_{S''}(\bar C)) = h^0 (\Gamma, \Sym^2 (\sE) \otimes \sA)
  \leq
  h^0(\Gamma, \sA)+
  h^0(\Gamma, \sA\otimes \sL)+
  h^0(\Gamma, \sA\otimes \sL^{\otimes 2}),
\]
with equality holding if $H^1(\Gamma, \sA)=0$.

Let us first assume that $e\geq 0$.
Then
$\fa=\deg(\sA)$ is larger than $\deg(\sA\otimes\sL)$,
hence $\sA$ is non-special as well, and thus
\[
  h^0(\Gamma, \sA)+
  h^0(\Gamma, \sA\otimes \sL)+
  h^0(\Gamma, \sA\otimes \sL^{\otimes 2})
  = 3(\fa-e)-3(q-1)+i,
\]
with
\[
  i = h^1(\Gamma, \sA\otimes \sL^{\otimes 2}).
\]
Therefore the condition that $C$ is linearly normal implies
$i \geq q$, hence $i=q$ and
$\sA\otimes \sL^{\otimes 2} = \O_\Gamma$, equivalently
$\sA =  \sL^{\otimes -2}$, and in particular $\fa=2e$.
Then the curve $\bar C$ is a member of the linear system
$|\O_{\P(\sE')}(2)|$,
where  $\O_{\P(\sE')}(1)$ is defined relatively to the vector bundle
$\sE' = \sE \otimes \sL^{-1}$ and, of course, $S''=\P(\sE')$.
One has
\[
  \deg(\sE') = -\deg (\sL) = e = \fa-e \geq 2q+2
\]
by Lemma~\ref{l:AtL_non-spec}.
Then, by \cite[Lemma~1]{CaCiFlMi}, $\sE'$ splits as $\sL^{-1} \oplus \O_\Gamma$ if
$h^1(S'', \O_{\P(\sE')}(1)) \geq q$.

To prove this inequality, we will relate
$H^1(S'', \O_{\P(\sE')}(1))$ to
$H^1(S'', \O_{\P(\sE')}(2))$.
Since $\bar C \in |\O_{\P(\sE')}(2)|$ is linearly normal and
$\restr {\O_{S''}(\bar C)} {\bar C}$ is non-special, the restriction exact sequence of
$\bar C \subset S''$ gives an isomorphism
\[
  H^1(S'', \O_{\P(\sE')}(2)) = H^1(S'', \O_{S''}(\bar C))
  \cong H^1(\O_{S''})=q.
\]

Now, consider a general member $D$ of
the linear system $|\O_{\P(\sE')}(1)|$; 
since the vector bundle $\sE'$ is positive
enough, $D$ is a smooth curve
isomorphic to $\Gamma$, and $\restr {\O_{\P(\sE')}(2)} D$
is non-special.
Therefore, it follows from the restriction exact sequence
\[
  0 \to
  \O_{\P(\sE')}(1)
  \to \O_{\P(\sE')}(2)
  \to \restr {\O_{\P(\sE')}(2)} D
  \to 0
\]
that
\[
  h^1(S'', \O_{\P(\sE')}(1))
  \geq
  h^1(S'', \O_{\P(\sE')}(2)).
\]
We thus conclude by \cite[Lemma~1]{CaCiFlMi} that
 $\sE' = \sL^{-1} \oplus \O_\Gamma$,
and $S''$ is mapped by the linear system $|\O_{\P(\sE')}(1)|$ to the
cone over $\Gamma$ in its embedding defined by $|\sL^{-1}|$.
The conclusion follows, since $\bar C$ is a member of
$|\O_{\P(\sE')}(2)|$.

It remains to explain how to adapt these arguments when $e< 0$.
In this case $\sA\otimes\sL$ and $\sA\otimes\sL^{\otimes 2}$ are non-special,
and then the 
linear normality of $C$ implies in the same way as above that
$h^1(\Gamma,\sA)=q$, hence $\sA = \O_\Gamma$ and $\fa=0$.
Thus, $\bar C$ is a member of $|\O_{\P(\sE)}(2)|$.
One has
\[
  \deg(\sE) = -e = \fa-e \geq 2q+2,
\]
and
$h^1(\O_{\P(\sE)}(1)) \geq q$ by the exact same argument as above.
It therefore follows yet again from \cite[Lemma~3.5]{CaCiFlMi}
that $\sE$ is split, which  contradicts the assumptions that
$\sE$ is normalized and $e<0$.
\end{proof}

Conversely, virtually
every curve which is a double cover has a linearly normal projective
model which is a hyperplane section of a
surface ruled in conics, as the following example shows.

\begin{expl}
Let $\Gamma$ be a smooth curve of genus $q$,
and $\pi: C \to \Gamma$ a smooth double cover of genus $g$, with
branch divisor $B \subset \Gamma$. Correspondingly, there exists a line
bundle $\sG$ on $\Gamma$ such that
$\sG^{\otimes 2} = \O_\Gamma(B)$.
Then $C$ is a divisor in the surface $S = \P(\O_\Gamma\oplus \sG)$, member
of the linear system $|\O_{\P(\O_\Gamma\oplus \sG)}(2)|$, with normal bundle
$\restr {\O_S(C)} C = \pi^*(\sG)^{\otimes 2} = \pi^* (\O_\Gamma(B))$.
If $\deg (\sG) > 2q-2$,
the linear system $|\O_{\P(\O_\Gamma\oplus \sG)}(1)|$ maps $S$
to a cone over $\Gamma$, and $C$ in its embedding defined by
$|\pi^*( \O_\Gamma(B))|$ is linearly normal, and a hyperplane section
of the $2$-Veronese re-embedding of this cone;
this is a particular case of Example \ref{ex:mu-cones} below.
\end{expl}

\noindent
Using Theorem~\ref{thm:segre} and
Lemma~\ref{prop:irr} together,
one obtains the following.

\begin{cor}\label{cor:irr}
Let $S$ be an irreducible and non-degenerate, linearly normal, irregular surface
of degree $d$ in $\P^n$. Assume that the hyperplane sections of $S$
are smooth, linearly normal, of genus $g \geq 7$.
If $d>3g-3$, then $S$ is either the 2-Veronese re-embedding of a cone or a simple internal projection thereof.
\end{cor}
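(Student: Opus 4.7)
The strategy is to use the dichotomy based on the (non)vanishing of the first adjoint $h^0(S', K_{S'}+H)$ to split into two cases, each of which is directly handled by a previously established result. First I would verify the numerical inequality $d\geq 2g+5$, which is the threshold required by Theorem~\ref{thm:segre}: for $g=7$ the hypothesis $d>3g-3=18$ gives $d\geq 19=2g+5$, while for $g\geq 8$ one has $2g+5\leq 3g-3<d$, so in either regime $d\geq 2g+5$.

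Suppose first that $h^0(S',K_{S'}+H)>0$. Then Lemma~\ref{prop:irr} (which applies because $q>0$ and $d>3g-3$) directly yields $\mu=H\cdot G=2$, meaning that $S$ is ruled by conics. Since the hyperplane sections are assumed to be smooth and linearly normal of genus $g$, and we have just checked that $d\geq 2g+5$, the hypotheses of Theorem~\ref{thm:segre} are satisfied, and its conclusion is exactly the one sought.

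It remains to treat the case $h^0(S',K_{S'}+H)=0$. Then Proposition~\ref{qd}\eqref{qd-lem:adj} forces $g=q$, and Corollary~\ref{cor:top} tells us that $S$ is a scroll; moreover, the linear normality of the hyperplane sections combined with Proposition~\ref{qd}\eqref{qd-v} gives $g=a+1$, hence $d=n+q-1$, so Corollary~\ref{cor:top}(i), namely C.~Segre's classical theorem, forces $S$ to be a cone over a smooth curve of genus $q\geq 7$ (a case implicitly included in the conclusion, as explained in the introduction). The main conceptual point, rather than a technical obstacle, is recognizing that this clean dichotomy on the first adjoint matches perfectly with the two possible output forms (cone versus 2-Veronese re-embedding of a cone), so that the substantive work has already been absorbed into Theorem~\ref{thm:segre} and Corollary~\ref{cor:top}.
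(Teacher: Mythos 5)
Your proof is correct and follows the same main line as the paper: the paper's own argument is a one-liner, ``Using Theorem~\ref{thm:segre} and Lemma~\ref{prop:irr} together, one obtains the following,'' with the accompanying remark that $3g-2\geq 2g+5$ exactly when $g\geq 7$, which is your degree check. The one genuine difference is that you explicitly split off the case $h^0(S',K_{S'}+H)=0$, i.e.\ $g=q$. This is a real point: Lemma~\ref{prop:irr} is only stated under the hypothesis $h^0(S',K_{S'}+H)>0$, so the paper's proof tacitly assumes the first adjoint is non-empty, whereas a cone over a linearly normal curve of genus $q=g\geq 7$ and degree $d>3g-3$ does satisfy all the hypotheses of the corollary (irregular, linearly normal, smooth linearly normal hyperplane sections). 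Your handling of that case via Proposition~\ref{qd}~\eqref{qd-lem:adj}, Proposition~\ref{qd}~\eqref{qd-v} and Corollary~\ref{cor:top}(i) is correct and shows $S$ is then a cone ruled by lines. Be aware, though, that such a cone is \emph{not} literally ``the 2-Veronese re-embedding of a cone or a simple internal projection thereof,'' so calling it ``implicitly included in the conclusion'' is a charitable reading; the introduction's phrasing (``either a cone or a 2-Veronese thereof'') is the accurate statement, and your case analysis in effect supplies the missing clause of the corollary rather than deriving the statement as literally written.
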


\noindent
(Note that $3g-2\geq 2g+5$ if and only if $g\geq 7$).

If we drop the assumption that
$d>3g-3$, then there are examples with
$\mu>2$ of irregular surfaces with linearly normal hyperplane
sections
(compare with Lemma \ref{prop:irr}),
where $\mu$ is as defined at the beginning of
Section~\ref{S:irreg}.
Moreover, some of these examples have empty bi-adjoint
system (compare with Section~\ref{s:empty-2adj}).
We check below that certain $3$-Veronese re-embeddings of cones 
provide such examples.

\begin{expl}
\label{ex:mu-cones}
Let us consider $\mu$-Veronese re-embeddings of cones, for arbitrary
$\mu >1$.
Let $\Gamma$ be a smooth curve of genus  $q>0$, and let $\sG$ be a line bundle
on $\Gamma$ of degree  $e>2q-2$. 
Let $S = {\P(\O_\Gamma\oplus \sG)}$,
and consider a smooth curve $C\subset S$
defined by a section of $\O_{\P(\O_\Gamma\oplus \sG)}(\mu)$.
Beware that in general a $\mu$-tuple cover of $\Gamma$ may not be
realized in this way.

We shall see the following:\\
(i) the projective curve image of $C$ by the linear
system $|\O_{\P(\O_\Gamma\oplus \sG)}(\mu)|$ is linearly normal;
in other words, 
the hyperplane sections of the $\mu$-Veronese re-embedding of
the cone over $(\Gamma,\sG)$
(i.e., the projective curve image of $\Gamma$ by $|\sG|$)
are linearly normal;
\\(ii) for all $m$ such that $2m>\mu$, the $m$-adjoint linear system
$|mK_{S'}+C|$ is empty;
\\(iii) one has $d>2g-2$, where $d = C^2$ and $g$ is the genus of $C$.

Let us first compute the genus $g$ of $C$.
For numerical equivalence, one has
\[
  C \equiv \mu D
  \quad \text{and}
  \quad
  K_S \equiv -2D+(2q-2+e)F,
\]
where $D$ is the divisor class of $\O_{\P(\O_\Gamma\oplus \sG)}(1)$
(hence $D^2=e$),
and $F$ is the numerical class of the fibres of $S\to \Gamma$.
Thus,
\begin{equation}
\label{eq:g-mu-cones}
  \begin{aligned}
    2g-2
    &= (K_S+C)\cdot C
    \\
    &= \mu D\cdot \bigl[
      (\mu-2)D+(2q-2+e)F
      \bigr]
    \\
    &= \mu\,
      \bigl[
      2(q-1)+(\mu-1)e
      \bigr].
  \end{aligned}
\end{equation}
On the other hand the degree $d$ of $C$ in the embedding defined by
$|\O_S(C)|$ is $C^2=e\mu^2$, and thus
\begin{align*}
  d > 2g-2
  &\iff
    \mu e > 2(q-1)+(\mu-1)e
  \\
  &\iff
    e > 2q-2,
\end{align*}
which proves (iii).
As a remark, note that
\begin{align*}
  d > 3g-3
  &\iff
    2e\mu > 3\,\bigl[
    2(q-1)+(\mu-1)e
    \bigr]
  \\
  &\iff
    (\mu-3)e < -6(q-1),
\end{align*}
so, since $q>0$, one has $\mu \leq 2$ if $d>3g-3$, as predicted by
Lemma \ref{prop:irr}.

One has
\[
  (C+mK_S)\cdot F = \mu-2m,
\]
which proves (ii).

Finally, let us prove (i).
Since  $e>2q-2$,  all positive multiples of $\sG$
are non-special, hence
\begin{align*}
  h^0(S,\O_{\P(\O_\Gamma\oplus \sG)}(\mu))
  &= \sum_{0\leq k \leq \mu} h^0(\Gamma,\sG^{\otimes k})
  \\
  &= 1 + \frac 1 2 \mu(\mu+1)e + \mu (1-q).
\end{align*}
On the other hand,  if $\pi: C \to \Gamma$ is the natural morphism,  $\pi^*(\sG)^{\otimes \mu}$ is non-special as well, because
$$
\deg (\pi^*(\sG)^{\otimes \mu})=e\mu^2=d>2g-2.
$$
Therefore
\begin{equation*}
  h^0(C,\pi^*(\sG)^{\otimes \mu}) = \mu^2e-g+1,
\end{equation*}
and
\begin{align*}
  h^0(C,\pi^*(\sG)^{\otimes \mu}) = h^0(S,\O_{\P(\O_\Gamma\oplus \sG)}(\mu))-1
  &\iff
    \mu(\mu+1)e + 2 \mu (1-q)
    = 2\mu^2e-(2g-2)
  \\
  &\iff
    2g-2 = \mu (\mu-1)e+2\mu(q-1).
\end{align*}
Since the last equality indeed holds, see \eqref{eq:g-mu-cones},
$C$ is linearly normal as we wanted.
\end{expl}

\section 
{The rational case: first results}\label{sec:rat}

\noindent
In this section we consider the case when $S$ is rational and
$d \geq n$.

\begin{prop}\label{basepoints2}
Let $S, S',H$ be as  in Section \ref {ssec:setup},
and assume that
$S$ is rational,
and $d\geq n$ (i.e., $a\geq 0$). 
One has:  \\
\begin{inparaenum}[(i)]
\item the hyperplane sections of $S$ are linearly normal,
   $g=a+1=h^0(K_{S'}+H)$, and $S$ is not a scroll;\\
\item  if  $a=0$ (equivalently $g=1$), then  $K_{S'}+H \sim 0$;\\
\item  if  $a=1$  (equivalently $g=2$),  then
  $|K_{S'}+H|$ is a base-point-free pencil of rational curves,
  hence  $(K_{S'}+H)^2=0$ and $h^0(S', 2K_{S'}+H)=0$;\\
\item   if $|K_{S'}+H|$ is composed with a pencil  $|G|$, then
  $G^2=0$ and $K_{S'}+H\sim (g-1)G$, hence $(K_{S'}+H)^2=0$,
  $K_{S'}\cdot G=-2$, and $H\cdot G=2$; thus $S$ is ruled
  by conics, and the curves in $|H|$ are hyperelliptic;
  moreover,  $h^0(S', 2K_{S'}+H)=0$;\\
\item if $(K_{S'}+H)^2>0$, then $|K_{S'}+H|$ is not composed with a
  pencil,
  $a\geq 2$ (equivalently $g\geq 3$),
  and the morphism
  $\varphi_{|K_{S'}+H|}$ maps $S'$ onto a  non-degenerate surface
  in $\bP^{g-1}$,  and   $K_{S'}^2\geq n-2g+1$  (or equivalently
  $(K_{S'}+H)^2\geq g-2=a-1$); moreover,
 if  equality holds, then the morphism $\varphi_{|K_{S'}+H|}$ is
 birational onto its image, which is a surface $S''$ of minimal degree
 $g-2=a-1$   in $\bP^{g-1}$, and $h^0(S', 2K_{S'}+H)=0$.
 In addition, in this case, if $S''$ is neither $\P^2$ nor the
 Veronese surface $V_2 \subset \P^5$, then $S$ has a $1$-dimensional
 family of rational cubic curves.
 \end{inparaenum}
\end{prop}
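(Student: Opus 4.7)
The plan is to unwind the five parts using the structural results already gathered in Sections~\ref{S:prelim2}--\ref{sec:genprop}. For (i), rationality means $q=0$, so Proposition~\ref{qd}, \eqref{qd-vi} directly gives that the hyperplane sections of $S$ are linearly normal and $g=a+1$, while \eqref{qd-lem:adj} yields $h^0(K_{S'}+H)=g$; finally $S$ is not a scroll by Remark~\ref{rem:obs2}, since rational scrolls have minimal degree $d=n-1$, which violates $d\geq n$. For (ii), $a=0$ gives $g=1$ and $(K_{S'}+H)\cdot H=2g-2=0$, while $K_{S'}+H$ is nef (Proposition~\ref{nef}). Applied with $H$ big and nef, the Hodge index theorem forces $K_{S'}+H\equiv 0$, and since $h^0(K_{S'}+H)=1$ this promotes to $K_{S'}+H\sim 0$.

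Parts (iii) and (iv) follow the same template: either because $h^0(K_{S'}+H)=2$ forces $\varphi_{|K_{S'}+H|}$ to factor through a curve in (iii), or by the composedness hypothesis in (iv), we get $(K_{S'}+H)^2=0$. Proposition~\ref{feixe} then produces a base-point-free pencil $|G|$ of rational curves with $K_{S'}+H\sim (g-1)G$, $G^2=0$, $K_{S'}\cdot G=-2$, $G\cdot H=2$; the alternative $K_{S'}+H\sim 0$ is excluded since $|K_{S'}+H|$ has positive dimension. Base-point-freeness also follows from Proposition~\ref{p:bpf-rat}. The restriction of $|G|$ to a general $C\in|H|$ is a $g^1_2$, so $C$ is hyperelliptic. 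Finally, $(2K_{S'}+H)\cdot G=-2$, and since $G$ is nef no effective divisor can have negative intersection with it, whence $h^0(2K_{S'}+H)=0$.

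For (v), non-composedness with a pencil is the contrapositive of (iv), and parts (ii)--(iii) exclude $a\leq 1$. Since $|K_{S'}+H|$ is base-point-free (Proposition~\ref{p:bpf-rat}) and $h^0(K_{S'}+H)=g$, the morphism $\varphi_{|K_{S'}+H|}$ sends $S'$ onto a non-degenerate surface in $\P^{g-1}$, non-degeneracy being a consequence of the completeness of the linear system. The inequality $(K_{S'}+H)^2\geq g-2$ is Proposition~\ref{lem:due}, \eqref{due-vi} with $q=0$; the equivalent form $K_{S'}^2\geq n-2g+1$ follows from Proposition~\ref{qd}, \eqref{qd-viii}.

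The most delicate point is the equality case $(K_{S'}+H)^2=g-2$. Then $\varphi_{|K_{S'}+H|}(S')=S''\subset\P^{g-1}$ is a non-degenerate surface of degree at most $g-2$, and the classical bound $\deg\geq\codim+1$ forces equality: thus $\varphi_{|K_{S'}+H|}$ is birational and $S''$ is a surface of minimal degree. The vanishing $h^0(2K_{S'}+H)=0$ comes from Proposition~\ref{lem:due}, \eqref{due-iv}, since in the equality case $p_a(K_{S'}+H)=q=0$. If $S''$ is neither $\P^2$ nor $V_2$, the classification of surfaces of minimal degree identifies $S''$ with a rational scroll; its lines $L\subset S''$ pull back via $\varphi_{|K_{S'}+H|}$ to a one-parameter family of curves $R\subset S'$ satisfying, by the projection formula, $R^2=L^2=0$ and $(K_{S'}+H)\cdot R=H_{S''}\cdot L=1$. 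A general $R$ is smooth rational (isomorphic to $L$ via $\varphi$), so adjunction gives $K_{S'}\cdot R=-2$ and hence $H\cdot R=3$, producing the asserted one-parameter family of rational cubics on $S$. The main obstacle will be the coordinated verification in this equality case that $S''$ is of minimal degree, the map is birational, and the pulled-back ruling yields precisely the rational cubics.
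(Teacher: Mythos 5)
Your proposal is correct and follows essentially the same route as the paper: parts (i)--(iv) rest on Proposition~\ref{qd}, Proposition~\ref{p:bpf-rat}, and the pencil analysis of Proposition~\ref{feixe}, and part (v) combines Proposition~\ref{lem:due}, \eqref{due-vi} with the degree bound for non-degenerate surfaces in $\P^{g-1}$ exactly as the paper does. The only cosmetic deviations are your use of the Hodge index theorem in (ii), where the paper simply observes that a base-point-free zero-dimensional system must be the zero divisor, and your explicit intersection computation producing the rational cubics in (v), which the paper leaves implicit.
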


\begin{proof}
(i) follows from Proposition~\ref{qd}, \eqref{qd-lem:adj},
Proposition \ref {qd}, \eqref{qd-i} and \eqref{qd-v},
and  Remark~\ref{rem:obs2}.

(ii) follows  from the fact that $h^0(K_{S'}+H)=g=1$, together with
Proposition \ref {p:bpf-rat}.

If $a=1$, then $h^0(K_{S'}+H)=g=2$ hence
$|K_{S'}+H|$ is a pencil. It is base-point-free by Proposition \ref
{p:bpf-rat}, so $(K_{S'}+H)^2 = 0$.
Similarly,
if $|K_{S'}+H|$ is composed with a pencil $|G|$, since
it is base-point-free,  we have $G^2=0$;
moreover, $K_{S'}+H\sim (g-1)G$, because
$h^0(K_{S'}+H)=g$.
In all cases (setting $G=K_{S'}+H$ if $g=2$),
$0=G\cdot (K_{S'}+H)=G\cdot K_{S'}+G\cdot
H>G\cdot K_{S'}$. Then $G\cdot K_{S'}=-2$, which implies that the
curves in $|G|$ are rational, and $H\cdot G=2$.
Finally, $(2K_{S'}+H)\cdot G = -2$, so $2K_{S'}+H$ is not effective.
This proves both (iii) and (iv).

(v) The image of $\varphi_{|K_{S'}+H|}$ is a non--degenerate surface
in $\bP^{g-1}$.
By Proposition~\ref{lem:due}, \eqref{due-vi}, 
$(K_{S'}+H)^2 \geq g-2$. If equality holds, then
$\varphi_{|K_{S'}+H|}$ is birational onto its image, which is a surface
$S''$ of minimal degree $g-2$,
$p_a(K_{S'}+H) = 0$ and,
by \eqref{due-iv} of Proposition~\ref{lem:due},
$h^0(S',2K_{S'}+H)=0$.
If $S''$ is neither $\P^2$ nor the
Veronese surface $V_2 \subset \P^5$, then it is a scroll, and the
family of rational cubics on $S$ corresponds to the ruling of $S''$.
\end{proof}

\begin{prop}\label{conics}
\label{image}
Let $S, S',H$ be as  in Section \ref {ssec:setup}, and assume that $S$
is rational, $g\geq 1$,
and  $(K_{S'}+H)^2>0$.
Then:\\
\begin{inparaenum}[(i)]
\item\label{conics-0}  $S$ is not ruled by conics;\\
\item\label{conics-i}
$0\leq p_a(K_{S'}+H)= K_{S'}^2-n+2a+1$, and $h^0(2K_{S'}+H)= p_a(K_{S'}+H) =  K_{S'}^2-n+2a+1$;\\
\item\label{conics-ii}
  If $(K_{S'}+H)^2>a-1$ (i.e., $K_{S'}^2\geq n-2a$), then $S$ has no $1$-dimensional family of  rational curves of degree $\delta<4$.\\
\end{inparaenum}
\end{prop}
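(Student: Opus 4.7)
The plan is to address (i), (ii), (iii) in order, using the formulas in Propositions~\ref{basepoints2} and~\ref{lem:due} together with Lemma~\ref{grau}, which is really the key tool here.

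For (i), I first note that $(K_{S'}+H)^2>0$ excludes $g=1$ by Proposition~\ref{basepoints2}(ii), so $h^0(S',K_{S'}+H)=g\geq 2$. If $S$ were ruled by conics, its ruling would provide a pencil $\{G\}$ of rational curves with $G^2=0$ and $H\cdot G=2$; Lemma~\ref{grau} applied with $m=1$ then forces either $K_{S'}+H\lineq 0$ (impossible since $g\geq 2$) or all effective members of $|K_{S'}+H|$ to be supported on fibres of $\{G\}$, which yields $(K_{S'}+H)^2=0$, contrary to the hypothesis.

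For (ii), I substitute $g=a+1$ and $d=n+a$ (from Proposition~\ref{basepoints2}(i)) into the identity $p_a(K_{S'}+H)-1=K_{S'}^2+3g-3-d$ of Proposition~\ref{lem:due}(v), yielding at once the formula $p_a(K_{S'}+H)=K_{S'}^2-n+2a+1$. The non-negativity $p_a(K_{S'}+H)\geq 0$ follows because $K_{S'}+H$ is nef (Proposition~\ref{nef}(i)) and big by assumption, hence $1$-connected by Lemma~\ref{l:1-conn}. Finally, $h^0(2K_{S'}+H)=p_a(K_{S'}+H)$ is read off from Proposition~\ref{lem:due}(iii) combined with $q=0$ and $h^0(D,\omega_D)=p_a(D)$ for any $1$-connected $D\in|K_{S'}+H|$.

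For (iii), combining Proposition~\ref{lem:due}(vi) with $g=a+1$ gives $(K_{S'}+H)^2=a-1+p_a(K_{S'}+H)$, so the hypothesis $(K_{S'}+H)^2>a-1$ is equivalent to $p_a(K_{S'}+H)\geq 1$, and by~(ii) to $h^0(2K_{S'}+H)>0$. The case $\delta=1$ would make $S$ a scroll, which is ruled out by Proposition~\ref{basepoints2}(i). For $\delta\in\{2,3\}$, a $1$-dimensional family of rational curves on $S$ of degree $\delta$ lifts to a family of rational curves on $S'$ of class $R$ with $H\cdot R=\delta$; standard deformation theory for $\mathbb{P}^1$'s on a smooth surface bounds the dimension of such a family below by $R^2+1$, forcing $R^2=0$, so $\{R\}$ is a base-point-free pencil. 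Lemma~\ref{grau} applied with $m=2$ then yields $H\cdot R\geq 4$, contradicting $\delta<4$.

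The only mildly technical step is the reduction of an arbitrary $1$-dimensional family of rational curves on $S$ to a pencil on $S'$ of class $R$ with $R^2=0$: this is standard once one passes to the normalization of a general member so that it becomes a smooth $\mathbb{P}^1$ on $S'$, and then uses that $N_{R/S'}\cong \mathcal{O}_{\mathbb{P}^1}(R^2)$ has $h^0=R^2+1$, so $R^2\geq 1$ would produce a family of dimension $\geq 2$.
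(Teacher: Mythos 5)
Parts (i) and (ii) of your proposal are correct and take essentially the paper's own route: (i) is Lemma~\ref{grau} applied with $m=1$ (plus Zariski's lemma to deduce $(K_{S'}+H)^2\leq 0$ when all components of the adjoint divisors lie in fibres), and (ii) is exactly the combination of Proposition~\ref{lem:due}, \eqref{due-iii} and \eqref{due-v}, with $g=a+1$, $d=n+a$, $q=0$, and $1$-connectedness from Lemma~\ref{l:1-conn}.

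Part (iii) has a genuine gap at the step ``forcing $R^2=0$''. The identity $h^0(N_{R/S'})=R^2+1$ gives a \emph{lower} bound on the dimension of the full family of deformations of $R$; it does not cap that dimension at $1$ merely because you were handed a $1$-dimensional family. The given family may sit inside a larger one, in which case $R^2\geq 1$, and ``would produce a family of dimension $\geq 2$'' is not by itself a contradiction: the hypothesis to be refuted asserts the \emph{existence} of a $1$-dimensional family, not its maximality. Since Lemma~\ref{grau} applies only to pencils of self-intersection $0$, your concluding step collapses in that case. (For $\delta=2$ one could still invoke the classical fact that a $2$-dimensional family of conics forces $S=V_2$, hence $g=0$ --- this is how Proposition~\ref{lbirational} argues --- but for $\delta=3$ nothing is offered.) The repair is to drop the reduction to $R^2=0$ entirely and test $2K_{S'}+H$ directly against the nef class $L$ of the general member: $L^2\geq 0$ and $p_a(L)=0$ give $K_{S'}\cdot L\leq -2-L^2\leq -2$ by adjunction, hence $(2K_{S'}+H)\cdot L\leq -4+\delta<0$, contradicting $h^0(S',2K_{S'}+H)>0$, which you correctly showed is equivalent to $(K_{S'}+H)^2>a-1$. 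This is the paper's argument; it is shorter and avoids the dimension issue. A residual subtlety, shared with the paper, is that for $\delta=3$ the general member could a priori be a singular rational cubic (your ``normalization'' is then not a curve \emph{on} $S'$); one gets $-K_{S'}\cdot L\geq 2$ anyway from the standard fact that a rational curve through a general point of a smooth surface is free.
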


\begin{proof}
The assumption $g\geq 1$ is equivalent to $h^0(S', K_{S'}+H)>0$
(see Proposition~\ref{qd}, \eqref{qd-lem:adj}). 

Assertion \eqref{conics-0} follows  from  Lemma~\ref{grau}.
  
Assertion \eqref{conics-i} follows from Proposition~\ref{lem:due},
\eqref{due-iii} and \eqref{due-v}.

For assertion \eqref{conics-ii}: If $(K_{S'}+H)^2>a-1=g-2$ then,
by Proposition~\ref{lem:due}, \eqref{due-vi},
$p_a(K_{S'}+H)=h^0(2K_{S'}+H)>0$.
Suppose  that $S$ 
has a $1$-dimensional family of  rational curves of degree
$\delta<4$. Let  $L$  be the pull back to $S'$ of a general member of
the family. Then $H\cdot L\leq 3$, and $L^2\geq 0$. Since $L$ is a
rational curve, $K_{S'} \cdot L\leq -2$ by adjunction.  Hence
$(2K_{S'}+H) \cdot L< 0$, which is impossible because
$h^0(2K_{S'}+H)>0$ and $L$ is nef.  
\end{proof}

\section{The rational case: empty biadjoint system}
\label{sec:empt}

In this section we go on considering the same situation as in Section
\ref {sec:rat}, with some additional assumptions.

\begin{thm}\label{thm:class1}
Let $S, S',H$ be as  in Section \ref {ssec:setup},
and assume that $S$ is rational and $d\geq n$, i.e., $a\geq 0$.  
If $h^0(S', 2K_{S'}+H)=0$,
then one of the following cases occurs:\\
\begin{inparaenum}[(a)]
\item\label{class1-DP} $g=1$ and $S$ is a (weak) Del Pezzo surface;\\
\item\label{class1-V4} $g=3$ and $S$ is the Veronese surface $V_4$, or a simple internal projection thereof;\\
\item\label{class1-V5} $g=6$ and $S$ is the Veronese surface $V_5$, or a simple internal projection thereof;\\
\item\label{class1-hell} $S$ is a surface with hyperelliptic sections,
  which is
  either the degree $4g+4$ surface image of $\bF_e$ by the linear system 
   $|2E+(g+1+e)F|$,  where $0\leq e \leq g+1$, 
  or a simple internal projection thereof;\\
\item\label{class1-3gon} $S$ is a surface with trigonal sections,
  which is either the degree $3g+6$ surface image 
  of $\bF_e$ by a linear system of the form $|3E+(h+e+2)F|$,
  where $e\geq 0$ and $h\geq \max \{2e-2,e\}$ are integers such that
  $g=2h-e+2$, or a simple internal projection thereof.
\end{inparaenum}

Conversely, in all these cases,  $h^0(S', 2K_{S'}+H)=0$.
\end{thm}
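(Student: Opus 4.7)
The plan is to split the analysis according to the value of $(K_{S'}+H)^2$, which is non-negative by Proposition~\ref{nef}, (i). Throughout one uses rationality ($q=0$) to get $g=a+1$, $h^0(S',K_{S'}+H)=g$ (Proposition~\ref{qd}), and that $|K_{S'}+H|$ is base-point-free (Proposition~\ref{p:bpf-rat}). If $g=1$, Proposition~\ref{basepoints2}, (ii) gives $K_{S'}+H\sim 0$, so $S$ is the anticanonical image of the weak Del Pezzo surface $S'$, yielding case \eqref{class1-DP}. If $(K_{S'}+H)^2=0$ and $g\geq 2$ (covering $g=2$ by Proposition~\ref{basepoints2}, (iii)), Proposition~\ref{basepoints2}, (iv) supplies a base-point-free pencil $\{G\}$ of rational curves with $K_{S'}+H\sim(g-1)G$ and $H\cdot G=2$. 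The relative minimal model of $\{G\}$ must be some Hirzebruch surface $\bF_e$ with $G$ corresponding to a fiber $F$; on $\bF_e$ the divisor $H$ takes the form $2E+kF$, and solving $K_{\bF_e}+H\sim(k-e-2)F=(g-1)F$ gives $k=g+1+e$, while $H\cdot E=g+1-e\geq 0$ imposes $e\leq g+1$. Then $S$ is either the image of $\bF_e$ under $|2E+(g+1+e)F|$ or, if $S'\to\bF_e$ is non-trivial with contracted $(-1)$-curves of $H$-degree $1$, a simple internal projection thereof --- case \eqref{class1-hell}.

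The remaining case is $(K_{S'}+H)^2>0$, hence $g\geq 3$. Combining the hypothesis $h^0(2K_{S'}+H)=0$ with Proposition~\ref{conics}, (i) gives $(K_{S'}+H)^2=g-2$, and Proposition~\ref{basepoints2}, (v) then ensures that $\varphi=\varphi_{|K_{S'}+H|}$ is birational onto a surface $S''\subset\bP^{g-1}$ of minimal degree $g-2$. Such $S''$ must be either $\P^2$ (forcing $g=3$), the Veronese surface $V_2\subset\P^5$ (forcing $g=6$), or a rational normal scroll. In the first subcase $K_{S'}+H\sim\varphi^*L$ for $L$ a line, so $H\sim -K_{S'}+\varphi^*L$; on $\P^2$ this reads $H\sim 4L$, giving $S=V_4$, and when $\varphi$ is a non-trivial sequence of blow-ups each exceptional $(-1)$-curve $E_i$ satisfies $H\cdot E_i=1$, realizing $S$ as a simple internal projection of $V_4$ --- case \eqref{class1-V4}. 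The $V_2$ subcase is analogous, using that $\O_{V_2}(1)$ pulls back to $2L$ under the $2$-Veronese $\P^2\to V_2$, so $H\sim 5L$ on $\P^2$, giving case \eqref{class1-V5}.

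In the scroll subcase, the ruling of $S''$ pulls back via $\varphi$ to a base-point-free pencil of rational curves on $S'$, each of $H$-degree $3$ (by adjunction, since $(K_{S'}+H)\cdot R=1$ for a general ruling $R$), so the hyperplane sections of $S$ are trigonal. I would then argue that the relative minimal model of this pencil is a Hirzebruch surface $\bF_e$ and that all $(-1)$-curves contracted to it have $H$-degree $1$, so they correspond to centres of simple internal projections. Writing $H\sim 3E+bF$ on $\bF_e$ and imposing $(K_{\bF_e}+H)\cdot H=2g-2$ gives $b=h+e+2$ with $g=2h-e+2$, while $H\cdot E=h-2e+2\geq 0$ and the requirement that $|E+hF|=|K_{\bF_e}+H|$ define the birational morphism to $S''$ supply $h\geq 2e-2$ and $h\geq e$ --- case \eqref{class1-3gon}. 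The main obstacle is this scroll case, since proving that the trigonal pencil descends to a $\bP^1$-bundle structure on a clean model of $S'$ requires controlling possible reducible fibers and verifying that every exceptional component of $S'\to\bF_e$ is an allowed simple-projection curve. The converse is a routine check: in \eqref{class1-DP} one has $2K_{S'}+H\sim K_{S'}$ with $\kappa(S')=-\infty$; in cases \eqref{class1-hell} and \eqref{class1-3gon} the computation $(2K_{S'}+H)\cdot F<0$ against the nef ruling $F$ of $\bF_e$ does the job; and for \eqref{class1-V4} and \eqref{class1-V5} one checks directly on $\P^2$ that $(2K_{S'}+H)\cdot L<0$ for a general line.
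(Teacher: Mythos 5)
Your overall architecture is the same as the paper's: treat $g=1$ and the pencil/$(K_{S'}+H)^2=0$ case via Proposition \ref{basepoints2}, then in the case $(K_{S'}+H)^2>0$ observe that emptiness of $|2K_{S'}+H|$ forces the adjoint curves to be rational, so that $\varphi_{|K_{S'}+H|}$ maps $S'$ birationally onto a surface $S''$ of minimal degree $g-2$ in $\P^{g-1}$, and then split according to whether $S''$ is $\P^2$, $V_2$, or a scroll. The cases (a)--(d) and the two Veronese cases are handled as in the paper, and your converse check is the paper's ``direct examination''.

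The one point where you diverge is the scroll case, and the obstacle you flag there is self-inflicted: you propose to build the Hirzebruch model as a relative minimal model of the trigonal pencil on $S'$, which indeed would require controlling reducible fibres. The paper avoids this entirely. Since $S''$ has minimal degree, its minimal desingularization is already a Hirzebruch surface $X=\bF_e$ with $\phi^*|\O_{S''}(1)|=|E+hF|$, and $\varphi\colon S'\dasharrow X$ is a birational \emph{morphism} except in the cone case $h=e$ (which is analysed separately and only occurs for $e=h=2$), because $|K_{S'}+H|$ is base-point-free and $\varphi_{|K_{S'}+H|}$ factors through $\phi$. One then simply pushes a general $C\in|H|$ forward: $C\cdot F=3$ gives $\varphi_*C\sim 3E+kF$, the relation $2g-2=\varphi_*C\cdot(E+hF)$ gives $k=h+e+2$, and nefness gives $h\geq 2e-2$; every $(-1)$-curve contracted by $\varphi$ has $(K_{S'}+H)$-degree $0$, hence $H$-degree $1$, which is exactly the simple-internal-projection statement. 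No $\P^1$-bundle structure on $S'$ needs to be produced. Two smaller remarks: the equality $(K_{S'}+H)^2=g-2$ does not follow from Proposition \ref{conics}, \eqref{conics-0} (which only says $S$ is not ruled by conics); it follows from $p_a(K_{S'}+H)=h^0(2K_{S'}+H)=0$ together with Proposition \ref{lem:due}, \eqref{due-vi}, or equivalently from $\dim|K_{S'}+H|=(K_{S'}+H)^2+1$ for a base-point-free system of rational curves. Also, in case \eqref{class1-3gon} you should record the additional constraint $h\geq e$ (coming from $|E+hF|$ being the hyperplane system of the scroll $S''$), which is part of the statement.
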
 

\begin{proof} By Proposition \ref {p:bpf-rat}, $|K_{S'}+H|$ is
base-point-free of dimension $g-1\geq 0$.
By Proposition \ref{basepoints2}, (ii) and (iv),
if $g=1$ then we are in
case \eqref{class1-DP}, and if $|K_{S'}+H|$ is composed with a pencil
then we are in case \eqref{class1-hell}.  Else, $(K_{S'}+H)^2>0$, and
the general curve in $|K_{S'}+H|$ is smooth and irreducible. Since
$|2K_{S'}+H|$ is empty, the curves in $|K_{S'}+H|$ have genus 0, hence
$(K_{S'}+H)^2=\dim (|K_{S'}+H|)-1=g-2=a-1$. Consider the morphism
$$\varphi_{|K_{S'}+H|}: S'\longrightarrow S''\subset \bP^{g-1}$$
which, by Proposition \ref{basepoints2}, (v),
is birational onto its image,
which is a surface of minimal degree $g-2$ in $\bP^{g-1}$.  Hence $S''$
is either the plane (then $g=3$), or the Veronese surface $V_2$ in
$\bP^5$ (then $g=6$), or a rational normal scroll. In any event, we
denote by $\phi: X\longrightarrow S''$ the minimal
desingularization of $S''$ and we set $\sL=\phi^*(|\mathcal
O_{S''}(1)|)$. Then we have a birational map $\varphi: S'\dasharrow
X$, which is a morphism if $S''$ is not a cone, and $|K_{S'}+H|$ is
the pull--back via $\varphi$ of the linear system $\sL$. We have the
following cases:\\
\begin{inparaenum}
\item [(i)] $X=\bP^2$ and $\sL=|\sO_{\bP^2}(1)|$ (then $g=3$);\\
\item [(ii)] $X=\bP^2$ and $\sL=|\sO_{\bP^2}(2)|$ (then $g=6$);\\
\item [(iii)] $X=\bF_e$ and $\sL=|\sO_{\bF_e}(E+hF)|$, with $h\geq e$,
but not $e=h=1$.
\end{inparaenum}
Note that $\phi: X\longrightarrow S''$ is an isomorphism and $\varphi=\varphi_{|K_{S'}+H|}$ unless we are in case (iii) and $h=e$ (i.e., if $S'$ is a cone). 

In case (i) the general curve $C\in |H|$, that has genus 3,  is mapped
via $\varphi$ to a smooth plane curve of degree $2g-2=4$. Hence the
surface $S\subset \bP^{n}$ is the image of $\bP^2$ via a linear system
of generically smooth plane quartics, possibly with simple base points,
and the system has to be complete under the condition of containing
these base points.  Thus we are in case \eqref{class1-V4}.  In case
(ii), by the same argument  we end up in case \eqref{class1-V5}.  

Suppose  we are in case (iii). Then the surface $S''$ is a rational normal scroll of degree $2h-e$  in $\bP^{2h-e+1}$, hence $g=2h-e+2$ and the general curve $C\in |H|$ is mapped via $\varphi_{|K_{S'}+H|}$ to a canonical curve of degree $2g-2=4h-2e+2$. We abuse notation and denote by $C$ the image of $C$ on $X$. One has $C\cdot F=3$, thus $C\sim 3E+kF$, with $k$ a suitable integer. Since 
$$
4h-2e+2=2g-2=C\cdot (E+hF)=(3E+kF)\cdot (E+hF)=k+3h-3e,
$$
one has $k=h+e+2$, hence $C\sim 3E+(h+e+2)F$. Since $C$ is nef, one has $C\cdot E\geq 0$, which gives $h\geq 2e-2$. Moreover   
$$
C^2=(3E+(h+e+2)F)^2=6h-3e+12.
$$
Note that we can be in the cone case only if $e=h=2$ (hence $g=4$).
In any event,  we are in case \eqref{class1-3gon}.

Lastly, the fact that $|2K_{S'}+H|$ is empty in all these cases
follows by a direct examination.
\end{proof}

\begin{rem}\label{rem:bp}
In case \eqref{class1-V4} of Theorem \ref {thm:class1}, let $b$ be the length of the 0--dimensional curvilinear scheme $Z$ from which we make the internal projection of $V_4\subset {\bf P}^{14}$. Since $14-b= n\geq 3$, one must have $b\leq 11$. On the other hand, any curvilinear scheme $Z$ of length $b\leq 11$ lying on a smooth irreducible plane quartic gives independent conditions to plane curves of degree 4, so every such scheme is allowed.

In case \eqref{class1-V5}, let again $b$ be the length of the
0--dimensional curvilinear scheme $Z$ from which we make the internal
projection of $V_5\subset {\bf P}^{20}$. Since $20-b= n\geq 3$, one
must have $b\leq 17$. Any curvilinear scheme $Z$ of length $b\leq 14$
lying on a smooth irreducible plane quintic $D$ gives independent
conditions to plane curves of degree 5, so every such scheme is
allowed. If $15\leq b\leq 17$, $Z$ is allowed if and only if
there exists a non-special divisor $Z'$ on $D$ such that 
$Z+Z'\in |\mathcal O_D(5)|$,
i.e., $Z'$ is not contained in a conic. In this case $Z$ still gives  
independent conditions to plane curves of degree 5. 
\end{rem}

\section{The rational case: empty triadjoint system}
\label{sec:nonempt}

In this section we again consider the situation of
Section~\ref{sec:rat} with some additional assumptions, 
but the latter are different from the additional assumptions we
considered in Section~\ref{sec:empt}.

\begin{setup}
\label{setup-10}
  Let $S, S',H$ be as  in Section \ref {ssec:setup},
  and assume that $S$ is rational,  and $d\geq n$, i.e., $a\geq 0$.  
This time we suppose that the bi-adjoint system $|2K_{S'}+H|$ is
non-empty, whereas the tri-adjoint system $|3K_{S'}+H|$ is empty. This
latter condition is verified if $d>3g-3$ (see Lemma \ref
{lem:ad2_intro}).

As usual, $|K_{S'}+H|$ is base-point-free by Proposition \ref
{p:bpf-rat}; moreover, by Proposition \ref{basepoints2}, since
$|2K_{S'}+H|$ is non-empty, 
$|K_{S'}+H|$ is not composed with a pencil and
$(K_{S'}+H)^2>0$. In particular, $K_{S'}+H$ is big and nef, and the general curve $M\in |K_{S'}+H|$ is smooth and irreducible. 
We set
$$
|2K_{S'}+H|=\Phi_2+|M_2|
$$
where $|M_2|$ is the (possibly empty) movable part, and $\Phi_2$ is
the fixed part. One has 
$M\cdot \Phi_2=0$.
\end{setup}

\begin{lem}
\label{l:marga-proj}
  There exists a $(-1)$-curve  $E$ on $S'$ such that $H\cdot E=1$ if and
  only if $S$ is   a simple    internal projection.
  
   Furthermore, if there is a $(-1)$-divisor $A$  such that $H\cdot A=1$, then $S$ is a simple  internal projection. 
  
\end{lem}

\begin{proof}

 If $S\subset \bP^n$ is  a simple internal projection, there is a surface  $\Sigma\subset \bP^{n+1}$ such that $S$ is obtained  by  projecting $\Sigma$ from a smooth point $p\in \Sigma$ in such a way that the projection $\Sigma\dasharrow S$ is birational.  Let $\Sigma'\longrightarrow \Sigma$ be the blow--up of $\Sigma$ at $p$, with exceptional divisor $E$ that is a $(-1)$--curve. The projection $\Sigma\dasharrow S$ determines an isomorphism $\sigma: \Sigma'\longrightarrow S$, and, abusing notation, we denote by $E$ the image of $E$ via $\sigma$, that is a line and it is contained in the smooth locus of $S$. Then the pull back of $E$ via $\pi: S'\longrightarrow S$, that we still denote by $E$ by abuse of notation, is a $(-1)$--curve and $H\cdot E=1$.  Hence the ``if'' part has been proved, so let us prove the other implication.

Let
$E$ be a $(-1)$-curve as in the statement, and set $\tilde H =
H+E$.

We claim that $h^0(S',\tilde H) = h^0(S',H)+1$:
this follows from the exact sequence
\begin{equation*}
  0 \to \O_{S'}(H) \to \O_{S'}(\tilde H) \to \O_E(\tilde H) \simeq
  \O_{E}\to 0,
\end{equation*}
and the fact that $h^1(S',H)=0$, given by
Proposition \ref {qd}, \eqref {qd-i}. Now,
consider the image $\Sigma$ of $S'$ via the morphism $\phi$ determined by the (base point free) linear system $|\tilde H|$. Then $\phi$ contracts $E$ to a smooth point $p$ of $\Sigma$, and $S$ is the projection of $\Sigma$ from $p$. Accordingly, $S$ is a simple internal projection.

  To prove the last assertion, it suffices to show that one component of $A$ is a $(-1)$-curve.  
Since  $|K_{S'}+H|$ is nef and big and  $(K_{S'}+H)\cdot  A=0$, the intersection form on the components of $A$ is negative definite, hence by the adjunction formula $K_{S'} \cdot \theta\geq -1$ for every component $\theta$ of $A$.  From $K_{S'}\cdot A=-1$, we conclude that $A$ contains a  curve  $\theta_0$ such that  $K_{S'}\cdot \theta_0=-1$.  Since $\theta_0^2<0$, $\theta_0$ is a $(-1)$-curve.  \end{proof}

\noindent
We will use the following:

\begin{lem}\label{lem:caci}
Let $T,X$ be smooth, irreducible,
projective surfaces, and let $\varphi:  T  \dasharrow X$ be a
birational map.  
Let us consider the minimal resolution of the indeterminacies of
$\varphi$ by the following diagram 
$$
\xymatrix{
& Z
\ar[dl]_f \ar[dr]^{f'}
\\
T \ar@{-->}[rr]_\varphi && X.
}
$$
Let $D$ be a smooth curve on $T$. Then one has
\begin{equation}\label{eq:wol}
\varphi_*(D+K_{T})= \varphi_*(D)+K_X+A,
\end{equation}
where $A$ is the image via $f'_*$ of some $f$-exceptional divisor.
If  $|D|$  is base-point-free, one has  $\varphi_*( D  )\cdot A=0$. 
\end{lem}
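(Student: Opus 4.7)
My plan is to pull everything up to the common resolution $Z$, do the bookkeeping there with the two canonical bundle formulas, and then push back down via $f'_*$.

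First, I would write the standard ramification formulas for the two birational morphisms,
\[
K_Z = f^*K_T + R_f, \qquad K_Z = f'^*K_X + R_{f'},
\]
where $R_f$ and $R_{f'}$ are effective divisors supported on the $f$-exceptional and $f'$-exceptional loci respectively. Subtracting gives $f^*K_T = f'^*K_X + R_{f'} - R_f$, hence
\[
f^*(D+K_T) = f^*D + f'^*K_X + R_{f'} - R_f.
\]
Applying $f'_*$ to both sides, using that $f'_*f'^*=\mathrm{id}$ on Picard classes and that $f'_*(R_{f'})=0$ since $R_{f'}$ is $f'$-exceptional, and recalling the definition $\varphi_*:=f'_*\circ f^*$, I obtain
\[
\varphi_*(D+K_T) = \varphi_*(D) + K_X - f'_*(R_f) + f'_*\bigl( f^*D - \tilde D \bigr),
\]
where $\tilde D$ is the strict transform of $D$ on $Z$ and $f^*D - \tilde D$ is $f$-exceptional. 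The correction divisor $A$ is therefore the $f'_*$-image of an $f$-exceptional divisor, establishing the first part of the statement.

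For the intersection assertion, I would exploit the base-point-freeness of $|D|$ to choose $D$ general avoiding the finite indeterminacy locus of $\varphi$ in $T$ (i.e.\ the image under $f$ of every $f$-exceptional divisor). With this choice, $f^*D=\tilde D$ and hence, by the projection formula applied to $f$, $\tilde D \cdot E = D \cdot f_*E = 0$ for every $f$-exceptional divisor $E$. The formula for $A$ then simplifies to $A = -f'_*(R_f)$, and I would compute
\[
\varphi_*(D)\cdot A = f'_*(\tilde D)\cdot f'_*(-R_f) = \tilde D \cdot f'^*f'_*(-R_f)
\]
via the projection formula for $f'$. Writing $f'^*f'_*(-R_f) = -R_f + \Delta$, with $\Delta$ an $f'$-exceptional correction on $Z$, one sees that $\tilde D \cdot (-R_f)=0$ by the previous step.

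The main obstacle, as is typical for such intersection statements on a common resolution, is the final step: showing that $\tilde D \cdot \Delta = 0$ as well, i.e.\ that the $f'$-exceptional corrections coming from expanding $f'^*f'_*(-R_f)$ contribute nothing against $\tilde D$. I expect this to follow from the minimality of the resolution of $\varphi$, which ensures that the $f$- and $f'$-exceptional configurations interact in a controlled way, combined with the freedom left in choosing the general $D$ in the base-point-free system~$|D|$.
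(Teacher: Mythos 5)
The paper offers no proof of this lemma beyond the one-line citation of \cite[Formula (13)]{Caci}, so a self-contained derivation is welcome. Your proof of \eqref{eq:wol} is correct and is the expected argument: the two ramification formulas on $Z$, subtraction, and pushforward by $f'$, with the $f$-exceptional divisor $f^*D-\widetilde{D}$ accounting for the discrepancy between total and strict transforms. (You silently switch between the conventions $\varphi_*=f'_*\circ f^*$ and $\varphi_*=f'_*(\,\widetilde{\cdot}\,)$ on the two sides of the formula; the convention consistent with the way the lemma is applied in Theorem \ref{thm:class2} is the second one on both sides, but either way $A$ comes out as the $f'_*$-image of an $f$-exceptional divisor, as required.)

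The obstacle you flag in the second part is genuine, but not for the reason you expect: the term $\widetilde{D}\cdot\Delta$ does \emph{not} vanish, and neither minimality of the resolution nor generality of $D$ will make it vanish, because the statement you are aiming at --- vanishing of the intersection number of $\varphi_*(D)$ and $A$ computed \emph{on $X$} --- is false in general. Take $\varphi:\P^2\dashrightarrow\P^2$ the standard quadratic transformation based at three points and $D$ a general quartic: then $\varphi_*(D)$ is an octic with three points of multiplicity $4$ at the fundamental points of $\varphi^{-1}$, $A\equiv-f'_*(R_f)$ is minus the triangle of lines joining those points, and $\varphi_*(D)\cdot A=-24$ on $X=\P^2$. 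What is true, and what the paper actually uses in the proof of Theorem \ref{thm:class2} (``the curves in $\mathcal{H}$ intersect $\Psi$ only at the base points''), is that the intersection of $\varphi_*(D)$ with $A$ is entirely absorbed by the points blown up by $f'$; equivalently $\widetilde{D}\cdot\Theta=0$ on $Z$, where $\Theta$ is the $f$-exceptional divisor with $f'_*\Theta=A$ (in the example: $8\cdot 3-3\cdot4\cdot2=0$). With this reading of ``$\varphi_*(D)\cdot A=0$'', the step you have already carried out --- a general member of a base-point-free system satisfies $f^*D=\widetilde{D}$, hence $\widetilde{D}\cdot E=0$ for every $f$-exceptional divisor $E$ --- \emph{is} the whole proof, and the correction term $\Delta$ never enters the computation. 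So the fix is not to close the gap but to interpret the intersection on $Z$ rather than on $X$.
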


\begin{proof} This follows from \cite[Formula (13)] {Caci}.\end{proof}

\begin{thm}\label{thm:class2}
Consider Setup~\ref{setup-10}, and assume that $|M_2|$ is empty.
Then:\\
\begin{inparaenum}[(a)]
\item\label{class2-a}
  if $g\geq 4$ , then $S$ is the image by the Veronese map $v_2$ of a
 (weak) Del Pezzo surface of degree $g-1$ in $\bP^{g-1}$, or a simple internal projection thereof, as in
\cite[Ex. 3.3(a)]{CD}.
In this case one has $g\leq 10$;\\
\item\label{class2-b}
 if $g=3$, then $S$ can be obtained as the image of the plane via
  the map determined by a linear system of the form $(6; 2^7)$,
  possibly with some further simple base points.
\end{inparaenum}
\end{thm}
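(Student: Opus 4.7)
The plan is first to extract numerical invariants. The hypothesis that $|M_2|$ is empty, combined with $|2K_{S'}+H|\neq\emptyset$ from Setup~\ref{setup-10}, gives $h^0(S',2K_{S'}+H)=1$; Proposition~\ref{lem:due}\eqref{due-iii} then forces $p_a(K_{S'}+H)=1$, and Proposition~\ref{lem:due}\eqref{due-vi} yields $(K_{S'}+H)^2=g-1$. Moreover $|K_{S'}+H|$ is base-point-free (Proposition~\ref{p:bpf-rat}) and not composed with a pencil (Setup~\ref{setup-10}), so the adjoint morphism $\varphi:=\varphi_{|K_{S'}+H|}\colon S'\to \Sigma\subset \P^{g-1}$ has surface image $\Sigma$ with $\deg\varphi\cdot\deg\Sigma=g-1$; the identity $(K_{S'}+H)\cdot \Phi_2 = M\cdot\Phi_2=0$ ensures that every component of $\Phi_2$ is contracted by $\varphi$.

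For assertion~\eqref{class2-a} (case $g\ge 4$), the general $M\in |K_{S'}+H|$ is a smooth elliptic curve; since $h^1(\O_{S'})=0$, the restriction $\restr{|K_{S'}+H|}{M}$ is complete of degree $g-1\ge 3$, embedding $M$ as an elliptic normal curve in $\P^{g-2}$. Hence $\varphi$ is birational with $\deg\Sigma=g-1$, and the hyperplane sections of $\Sigma$ are elliptic normal curves. By the classical classification of such surfaces (the cone alternative being ruled out by the rationality of $S'$), the minimal resolution $\tilde\Sigma\to \Sigma$ exhibits $\tilde\Sigma$ as a smooth weak Del Pezzo surface anticanonically embedded, so $g-1=K_{\tilde\Sigma}^2\le 9$ and $g\le 10$. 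To recover $H$, I would lift $\varphi$ to a birational morphism $\tilde\pi\colon S'\to \tilde\Sigma$, write $K_{S'}=\tilde\pi^*K_{\tilde\Sigma}+\mathcal{E}$ with $\mathcal{E}$ effective and $\tilde\pi$-exceptional, and combine with $\tilde\pi^*(-K_{\tilde\Sigma})=K_{S'}+H$ to obtain $H=-2K_{S'}+\mathcal{E}$; since $|2K_{S'}+H|=\{\Phi_2\}$ has dimension $0$, this forces $\Phi_2=\mathcal{E}$. Pushing forward, $\tilde\pi_*(H)=-2K_{\tilde\Sigma}$, so $|H|$ is the sub-system of $|{-2K_{\tilde\Sigma}}|$ consisting of divisors vanishing at the centers of $\tilde\pi$; thus $S$ is the $2$-Veronese re-embedding of $\tilde\Sigma$ in $\P^{3g-3}$ when $\tilde\pi$ is an isomorphism, and a simple internal projection from the blown-up points otherwise, as in \cite[Ex.~3.3(a)]{CD}.

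For assertion~\eqref{class2-b} (case $g=3$), one has $(K_{S'}+H)^2=2$ and $h^0(K_{S'}+H)=3$, so $\Sigma=\P^2$ and $\deg\varphi=2$. Stein factorizing $\varphi=\beta\circ\pi$ with $\pi\colon S'\to Y$ birational (contracting exactly the curves $\theta$ with $\theta\cdot(K_{S'}+H)=0$) and $\beta\colon Y\to \P^2$ finite of degree~$2$, the equality $K_{S'}+H=\pi^*\beta^*\O_{\P^2}(1)$ yields $-K_Y=\beta^*\O_{\P^2}(1)$, whereupon Riemann--Hurwitz forces the branch divisor of $\beta$ to be a plane quartic. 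Thus $Y$ is a (possibly singular) Del Pezzo of degree~$2$, its minimal resolution is $\tilde Y\cong \mathrm{Bl}_7\P^2$, and $S'\cong \mathrm{Bl}_{p_1,\dots,p_{7+s}}\P^2$ for some $s\ge 0$. Running the same computation $H=-2K_{S'}+\Phi_2$ on $\mathrm{Bl}_{7+s}\P^2$, with $-2K_{\tilde Y}=6L-2(E_1+\dots+E_7)$ on $\mathrm{Bl}_7\P^2$, yields $H=6L-2(E_1+\dots+E_7)-(E_8+\dots+E_{7+s})$, which is precisely the $(6;2^7)$ system with $s$ additional simple base points.

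The main technical obstacle is showing that $\varphi$ lifts to a morphism $\tilde\pi\colon S'\to \tilde\Sigma$ (respectively $S'\to \tilde Y$) and that this lift is a composition of blow-ups; once the blow-up structure is in place, the identity $H=-2K_{S'}+\Phi_2$ and hence the description of $|H|$ reduce to elementary Picard-group calculations, the crucial geometric input being $(K_{S'}+H)\cdot \Phi_2=0$, which forces $\Phi_2$ to be $\varphi$-exceptional.
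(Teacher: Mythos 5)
Your argument reaches the correct conclusions but by a genuinely different route from the paper's in both parts. For (a), the paper first reduces (via Lemma~\ref{l:marga-proj}) to the case where $S$ is not an internal projection, so that no $(-1)$-curve is contracted by $|M|$; it then applies Proposition~\ref{p:bpf-rat} to $M$ in place of $H$ to get that the $0$-dimensional system $|2K_{S'}+H|$ is base-point-free, hence $\Phi_2=0$ and $H\sim -2K_{S'}$ on the nose, after which $|-K_{S'}|$ does everything. You instead keep $\Phi_2$ around, study the adjoint morphism directly, and invoke the classical classification of degree-$(g-1)$ surfaces in $\P^{g-1}$ with elliptic normal hyperplane sections; your identity $H=-2K_{S'}+\mathcal{E}$ with $\mathcal{E}=\Phi_2$ the exceptional divisor of $S'\to\tilde\Sigma$ is a clean way to recover the $(6;2^h)$/projection description without the initial reduction. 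For (b), the paper uses the Calabri classification of nets of elliptic curves on rational surfaces together with the adjunction bookkeeping of Lemma~\ref{lem:caci} to kill the error divisor $\Psi$; you use Stein factorization of the degree-$2$ adjoint map onto $\P^2$ and the double-cover description. Both are legitimate; your route trades the linear-system classification for the Hidaka--Watanabe-type analysis of Gorenstein del Pezzo double planes. Incidentally, what you single out as ``the main technical obstacle'' is not one: any birational morphism from the smooth $S'$ to a normal surface factors through the minimal resolution, and any birational morphism of smooth surfaces is a composition of blow-ups.

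The one genuine gap is in case (b), at the step ``$Y$ is a (possibly singular) Del Pezzo of degree $2$, its minimal resolution is $\tilde Y\cong \mathrm{Bl}_7\P^2$.'' Riemann--Hurwitz on $M_Y\to(\text{line})$ does give that the branch divisor $B$ is a reduced quartic, but a normal double plane branched along an arbitrary reduced quartic need not have $\mathrm{Bl}_7\P^2$ as minimal resolution: for $B$ four concurrent lines, for instance, $Y$ has a simple elliptic singularity and is an elliptic ruled surface, not rational, and its resolution carries no $(6;2^7)$ structure. You must rule out non-ADE singularities of $B$. This is fillable: $Y$ is Gorenstein with $\chi(\O_Y)=1$ (compute from $\beta_*\O_Y=\O\oplus\O(-2)$), and since $S'$ is rational so is $\tilde Y$, whence $\chi(\O_{\tilde Y})=1=\chi(\O_Y)$ forces $R^1\O=0$ for the resolution, i.e.\ all singularities of $Y$ are rational; rational Gorenstein surface singularities are ADE, so $\tilde Y$ is a weak Del Pezzo of degree $2$, hence $\mathrm{Bl}_7\P^2$, and moreover $K_{\tilde Y}=\mu^*K_Y$ (crepancy), which you implicitly use when writing $K_{S'}+H=\rho^*(-K_{\tilde Y})$. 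A similar (lighter) remark applies to your appeal to the classification in case (a): there too the cone and the non-ADE possibilities must be excluded by rationality, which you do say for the cone. Also note that, as written, ``$-K_Y=\beta^*\O_{\P^2}(1)$, whereupon Riemann--Hurwitz forces the branch divisor to be a quartic'' has the implication backwards: one first gets $\deg B=4$ from Hurwitz on $M_Y$, and then $K_Y=\beta^*(K_{\P^2}+\tfrac12 B)=\beta^*\O_{\P^2}(-1)$.
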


\begin{proof}
We assume without loss of generality that $S$ is not an internal
projection. By Lemma~\ref{l:marga-proj}, this tells us that for every
$(-1)$-curve $E$ on $S'$, $H\cdot E>1$, hence $M\cdot E >0$.
Since $\dim (|2K_{S'}+H|)=0$,
the curves in $|M|$ have arithmetic genus $1$,
and $\dim(|M|)=g-1=M^2$.

\eqref{class2-a}
Assume that $g\geq 4$. Then the map defined by $|M|$ is birational.
Since moreover there are no $(-1)$-curves $E$ such that $M\cdot
E=0$,
we can apply Proposition \ref {p:bpf-rat} to $|M|$, and conclude that
$|K_{S'}+M|=|2K_{S'}+H|$ has no base points, so that $\Phi_2=0$,
and therefore $2K_{S'}+H \sim 0$, i.e., $H\sim -2K_{S'}$.
Then
$2g-2=2K^2_{S'}$, so that $g=K^2_{S'}+1\leq 10$ and, since $g\geq 4$,
one has $K^2_{S'}\geq 3$. Moreover, $h^1(S', -K_{S'})=h^1(S',
K_{S'}+H)=0$, and therefore, by the Riemann--Roch theorem, $\dim
(|-K_{S'}|)=K^2_{S'}\geq 3$. Then $|-K_{S'}|$ is a linear system of
curves of genus 1, and $\varphi_{|-K_{S'}|}$ maps $S'$ to a (weak) Del
Pezzo surface of degree $K^2_{S'}$. The assertion follows.

\eqref{class2-b}
Assume that $g=3$. In this case, $|M|$ is a net of elliptic curves. We cannot
apply Proposition~\ref{p:bpf-rat}, but we will verify by hand that an
analogous conclusion holds.
By the
classification theorem of linear systems of elliptic curves on a
rational surface, see \cite [p. 97]{Ca}, there is a birational map
$\varphi: S'\dasharrow X$, where $X$ is the
blow--up of $\P^2$ at 7 suitable (proper or infinitely near) points
$p_1,\ldots, p_7$ forming a curvilinear scheme, and $|M|$ is the pull back via
$\varphi$ of the anticanonical system of $X$.

Let $\psi: X\longrightarrow \bP^2$ be the blow-up map, and
consider the composite birational map $\gamma=\psi\circ \varphi:
S'\dasharrow \bP^2$.  
Then $\gamma$ maps
$|M|$ to the linear system $\sM=(3;1^7)$ of curves of degree 3 passing
through the points $p_1,\ldots, p_7$. 
Set $\Psi=\psi_*(A)$, in the notation of Lemma \ref {lem:caci}
applied to the map $\varphi: S'\dasharrow X$ and the divisor $H$ on $S'$.
The map $\gamma$ may contract some $(-1)$--curves to (proper or
infinitely near) further points $p_{8},\ldots, p_k$, different from
$p_1,\ldots, p_7$, which are not base points for $\mathcal M$. The
divisor $\Psi$ belongs to some 0--dimensional linear system
$(\ell;m_1,\ldots, m_k)$. 
The linear system $|H|$ is mapped by $\gamma$ to a linear system $\sH$ of the form 
$(\ell';m'_1,\ldots, m'_k)$.  We may consider all these linear systems on the blow--up of $\bP^2$ at the points $p_1,\ldots, p_k$. 

The system $\mathcal M-\Psi$, that is $(3-\ell;1-m_1,\ldots, 1-m_7, -m_{8},\ldots ,-m_k)$, is the adjoint system to $\mathcal H$ by \eqref {eq:wol}; this adjoint system has no base points by Proposition \ref {p:bpf-rat}, and has dimension 2. This implies that $m_8=\cdots =m_k=0$, otherwise $\mathcal M-\Psi$ would have in its base locus the exceptional curves corresponding to the points $p_8,\ldots, p_k$.

This implies that 
$$
\ell'=6-\ell, \,\,\, m'_i=2-m_i,\,\,\, \text{for}\,\,\, 1\leq i\leq 7, \,\,\,  m'_i=1,\,\,\, \text{for}\,\,\, 8\leq i\leq k.
$$
We have the following two important pieces of information: (1) the
curves in $\mathcal M$ cut out on the general curve in $\mathcal H$,
off the base points, the complete canonical series;  (2) the curves in
$\mathcal H$ intersect $\Psi$ only at the base points (by  Lemma \ref
{lem:caci}). Property (1) implies 
\begin{equation}\label{eq:ct}
  3(6-\ell)-\sum_{i=1}^7(2-m_i)=4,
  \quad \text{hence}\quad 3\ell- \sum_{i=1}^7m_i=0,
\end{equation}
which means that the curves in $\mathcal M$ intersect $\Psi$ only at the base points. Property (2) reads
$$
\ell(6-\ell)-\sum_{i=1}^7 m_i(2-m_i)=0
$$
which, taking into account \eqref {eq:ct}, implies
\begin{equation}\label{eq:cy}
\ell^2-\sum_{i=1}^k m_i^2= 0.
\end{equation}
But, since the curves in $\mathcal H$ intersect $\Psi$ only at the
base points,  $\Psi\neq 0$ implies that
$$
\ell^2-\sum_{i=1}^k m_i^2<0;
$$
thus, \eqref {eq:cy} yields that $\Psi=0$, which implies that
$\varphi$ is regular.
In the upshot, $\ell=0$ and $m_i=0$ for all $i=1,\ldots,k$,
therefore $\mathcal H$ is the system $(6; 2^7, 1^{k-7})$, which proves
the assertion.
\end{proof}

\begin{rem}\label{rem:dp} Theorem \ref {thm:class2}, (i), tells us
  that the surface $S$ can be obtained as the image of the plane via
  the map determined by a linear system of the form $(6; 2^h)$, with
  $h\leq 6$, possibly with some further simple base points, or as the
  2--Veronese image of a quadric in $\bP^3$ or a simple internal
  projection thereof. This shows the similarity of part (i) of Theorem \ref {thm:class2} with part (ii). 
\end{rem}

\noindent
Next we consider the case in which $|M_2|$ is non-empty.

\begin{thm}\label{thm:class3}
Consider Setup~\ref{setup-10}.
If $|M_2|$ is non-empty, then one of the following cases occurs: \\
\begin{inparaenum}[(a)]
\item\label{class3:V7}
$S$ is the surface image of
$\bP^2$ by a linear subsystem of
$|\sO_{\bP^2}(7)|$ determined by $s$ double base points,
or an internal projection thereof from a scheme $Z$ of length $t$,
where $s$ and $t$ are non-negative integers such that
$$d=49-4s-t \quad \text{and}\quad g=15-s; $$
\item\label{class3:V8}
$S$ is the surface image of
$\bP^2$ by a linear subsystem of
$|\sO_{\bP^2}(8)|$ determined by $s$ double base points,
or an internal projection thereof from a scheme $Z$ of length $t$,
where $s$ and $t$ are non-negative integers such that
\[
  d=64-4s-t\quad \text{and} \quad g=21-s;
\]
\item\label{class3:4gon}
$S$ is the surface image of $\bF_e$ 
by a linear subsystem of $|4E+(h+2e+4)F|$ determined by $s$ double base
points,
or an internal projection thereof from a scheme $Z$ of length $t$,
with integers $e\geq 0$, $h\geq \max \{2e-4,e-2\}$, $s\geq 0$,
and $t\geq 0$ such that
\[
  d=8h+32-4s-t \quad \text{and} \quad g=3h+9-s;
\]
\item\label{class3:5gon}
$S$ is the surface image of $\bF_e$ 
by a linear subsystem of $|5E+(h+2e+4)F|$ determined by $s$
double base points, or an internal projection thereof from a scheme $Z$
of length $t$,
with integers $e\geq 0$, $h\geq \max \{3e-4, 2e-2, e\}$, $s\geq
0$, and $t\geq 0$ such that
$$d=10h-5e+40-4s-t\quad \text{and}\quad g=4h-2e+12-s.$$
\end{inparaenum}
\end{thm}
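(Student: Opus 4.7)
The strategy mirrors that of Theorem~\ref{thm:class2}, but this time using the movable part $|M_2|$ of the bi-adjoint system in place of $|K_{S'}+H|$. The first step is to show that the general member of $|M_2|$ has arithmetic genus $0$. Since $3K_{S'}+H \lineq (K_{S'}+M_2)+\Phi_2$ with $\Phi_2$ effective and $|3K_{S'}+H|$ empty by assumption, $|K_{S'}+M_2|$ is also empty; then the long exact sequence associated to
\[
0 \to \O_{S'}(K_{S'}) \to \O_{S'}(K_{S'}+M_2) \to \omega_{M_2} \to 0,
\]
combined with $q=0$, yields $h^0(\omega_{M_2})=0$. Since $M_2$ is Gorenstein as an effective divisor on a smooth surface, this forces $p_a(M_2)=0$ when $M_2$ is irreducible.

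Next, we split the analysis according to the value of $M_2^2$; note that $|M_2|$, being the movable part of a linear system, has at most zero-dimensional base locus, so $M_2$ is nef. In \emph{Case~A}, where $M_2^2=0$, Proposition~\ref{composto} shows that $|M_2|$ is composed with a base-point-free pencil $\{F\}$ of rational curves, and $M_2 \lineq hF$ with $h\geq 1$. Contracting the $(-1)$-curves lying in fibres of $|F|$ reduces $S'$ to a Hirzebruch surface $\bF_e$ on which $|F|$ descends to the ruling. Writing $H = -2K_{S'}+M_2+\Phi_2$ and using $-2K_{\bF_e} \lineq 4E+(2e+4)F$, we see that on $\bF_e$ the class $H$ corresponds to $4E+(h+2e+4)F$, modified by the exceptional divisors of $s$ assigned double base points (appearing with multiplicity $2$ in $-2K_{S'}$) and simple projection centres of total length $t$ encoded in $\Phi_2$; this yields case~\eqref{class3:4gon}.

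In \emph{Case~B}, where $M_2^2>0$, the general $M_2$ is then irreducible and smooth rational. We invoke an analog of Proposition~\ref{minimal} for the pair $(S',M_2)$ in place of $(S,H)$: the emptiness of $|K_{S'}+M_2|$ together with $M_2$ being big and nef forces, after contracting the $(-1)$-curves $\theta$ with $M_2\cdot\theta=0$, the reduction to one of $(\bP^2,L)$ with $M_2^2=1$, $(\bP^2,2L)$ with $M_2^2=4$, or $(\bF_e,E+hF)$ with $M_2^2=2h-e>0$. Using $H=-2K_{S'}+M_2+\Phi_2$, the respective minimal models give $H\lineq 7L$, $8L$, or $5E+(h+2e+4)F$ modulo blow-ups and the correction from $\Phi_2$, recovering cases~\eqref{class3:V7}, \eqref{class3:V8}, and~\eqref{class3:5gon}. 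The key bookkeeping is the following: a contracted $(-1)$-curve $\theta$ becomes an assigned \emph{double} base point of the plane or scroll system when $(2K_{S'}+H)\cdot\theta=0$ (so $\theta\not\subset\Phi_2$), and an internal projection centre when $(2K_{S'}+H)\cdot\theta=-1$ (so $\theta$ appears in $\Phi_2$ with multiplicity~$1$).

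The converse is checked by a direct numerical verification in each case. The main obstacle is the clean application of the Proposition~\ref{minimal}-analog in Case~B: one must carefully control the isolated base points of $|M_2|$, verify that the general $M_2$ is indeed smooth and irreducible (not merely of arithmetic genus $0$), and ensure that $\phi_{|M_2|}$ descends correctly to the expected minimal rational surface. This step is more delicate than its counterpart in Theorem~\ref{thm:class2}, where the system $|K_{S'}+H|$ was known \emph{a priori} to be base-point-free by Proposition~\ref{p:bpf-rat}; here the analogous input must be established for the pair $(S',M_2)$ by hand.
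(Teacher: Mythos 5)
Your overall strategy coincides with the paper's: view $|M_2|$ as a system of rational curves (rationality of its members following from the emptiness of $|3K_{S'}+H|$ and $q=0$, as you argue), map to a surface of minimal degree or split off a ruling, and recover $|H|$ as $M_2-2K_{S'}$ up to base points. The numerology in all four cases is correct. But there is a genuine gap at the heart of the argument: you never control the fixed part $\Phi_2$. Your bookkeeping asserts that $\Phi_2$ consists exactly of $(-1)$-curves $\theta$ with $(2K_{S'}+H)\cdot\theta=-1$, each contributing an internal projection centre; nothing in your proposal justifies this. A priori $\Phi_2$ could contain components meeting $2K_{S'}+H$ non-negatively, components of higher multiplicity, or more complicated negative configurations, and then the identification of $\varphi_*(H)$ with the asserted classes on $\P^2$ or $\bF_e$ breaks down. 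The paper's proof spends essentially all of its effort on precisely this point: after reducing (via Lemma~\ref{l:marga-proj}) to the case where $S$ is not an internal projection, so that every $(-1)$-curve $E$ satisfies $H\cdot E\geq 2$ and hence $(K_{S'}+H)\cdot E>0$, it proves three Claims --- $2K_{S'}+H$ is nef, $(K_{S'}+H)^2\geq 3$, and $2K_{S'}+H$ is base-point-free --- using the connectedness machinery of Lemmas~\ref{l:1-conn}, \ref{l:CFM} and \ref{reider-1con} together with \cite[Prop.~A.5 and A.7]{cfm}. The conclusion $\Phi_2=0$ is what makes the subsequent pushforward computation legitimate; the parameter $t$ in the statement then comes solely from undoing the initial reduction, not from $\Phi_2$. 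You acknowledge in your last paragraph that ``the analogous input must be established \ldots by hand,'' but that input is not a technical refinement to be added later: it is the main content of the proof.

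Two smaller points. First, your Case~A applies Proposition~\ref{composto} to conclude that $|M_2|$ is composed with a pencil of \emph{rational} curves, but that proposition's rationality conclusion requires $M_2=K_{S'}+L$ with $L$ big and nef, which is only available once $\Phi_2=0$ (so that $M_2=K_{S'}+M$ with $M=K_{S'}+H$ big and nef). Second, the dichotomy $M_2^2=0$ versus $M_2^2>0$ is not quite the right one: a system composed with a rational pencil with base points has positive self-intersection, so irreducibility of the general member in your Case~B needs the base-point-freeness of $|M_2|$ --- again the missing Claim. The paper's case division (composed with a pencil or not) avoids this once base-point-freeness is in hand.
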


\begin{proof}
As before,
we assume without loss of generality that $S$ is not an internal
projection. By Lemma~\ref{l:marga-proj}, this tells us that for every
$(-1)$-curve $E$ on $S'$, $H\cdot E>1$, hence $M\cdot E >0$.
As stated in Setup~\ref{setup-10},
$|M|=|K_{S'}+H|$ is base-point-free, $M^2>0$, and
the general curve  in $|M|$ is smooth.  Since $|3K_{S'}+H|$ is
empty, any irreducible curve contained in a curve of
$|2K_{S'}+H|=\Phi_2+|M_2|$ is smooth and rational.
We start by proving various claims.

\begin{claim}
The divisor $2K_{S'}+H=K_{S'}+M$ is nef.
\end{claim}

\begin{proof}[Proof of the claim]
Suppose there exists an irreducible curve $\theta$ such that
$\theta \cdot (K_{S'}+M) <0$.
Then $\theta ^2 <0$.
Moreover, $K_{S'} \cdot \theta \leq \theta \cdot (K_{S'}+M) <0$.
This implies that $\theta$ is a $(-1)$-curve and $M\cdot \theta\leq 0$,
which is excluded by our assumption that $S$ is not an internal
projection. 
\end{proof}

\begin{claim}
One has $M^2 \geq 3$.
\end{claim}

\begin{proof}[Proof of the claim]
One has $M^2>0$, so it suffices to prove that
$M^2 \neq 1, 2$.

Let us first prove that $K_{S'}\cdot M <0$. One has
$K_{S'}\cdot M = K_{S'} \cdot (K_{S'}+H) < K_{S'}^2$.
Thus, if $K_{S'}^2 \leq 0$, the wanted inequality holds.
Else, $K_{S'}^2 >0$. Then, by Riemann--Roch, $-K_{S'}$ is effective and, by the
index theorem,
$-K_{S'}\cdot (K_{S'}+H) >0$, which is the required inequality.

Assume by contradiction that $M^2=1$ or $2$. Since $K_{S'}\cdot M <0$
and the curves in $|M|$ are not rational, we must have
$(K_{S'}+M)\cdot M=0$. Besides,
$(K_{S'}+M)^2 \geq 0$ because $K_{S'}+M$ is nef.
Therefore, by the index theorem,
$K_{S'}+M \lineq 0$, i.e., $2K_{S'}+H\lineq 0$. Then
$|M_2|$ is empty, contrary to the assumption.
\end{proof}

\begin{claim}
The divisor $2K_{S'}+H=K_{S'}+M$ is base-point-free.
In particular, $\Phi_2=0$.
\end{claim}

\begin{proof}[Proof of the claim]
Let us first consider the case in which
$(2K_{S'}+H)^2=0$.
Then, by Proposition~\ref{composto}, there exists a base-point-free
pencil $|G|$ of rational curves such that
$|2K_{S'}+H| = |hG|$ for some integer $h>0$, and the assertion
follows.

Else, $(2K_{S'}+H)^2 >0$.
Then we argue in a way similar to the proof of
Proposition~\ref{connected}.
Assume by contradiction that there exists a base point $x$ of
$|K_{S'}+M|$. There is a curve $M \in |M|$ passing through $x$. Then
the canonical series of $M$ has a base point, and therefore $M$ is not
$2$-connected, by \cite[Proposition (A.7)]{cfm}.
However, by Lemma~\ref{l:1-conn}, $M$ is $1$-connected because it is
big and nef.
Therefore, by Lemma~\ref{l:1-conn},
there exists a decomposition $M=A+B$, with $A$ and $B$
effective and $1$-connected, $A\cdot B = 1$, and $A^2\leq B^2$,
and one of
the following holds:\\
\makeatletter
\newcommand*{\@greek}[1]{%
  $\ifcase#1\or\alpha\or\beta\or\gamma\or\delta\or\varepsilon
    \or\zeta\or\eta\or\theta\or\iota\or\kappa\or\lambda
    \or\mu\or\nu\or\xi\or o\or\pi\or\varrho\or\sigma
    \or\tau\or\upsilon\or\phi\or\chi\or\psi\or\omega
    \else\@ctrerr\fi$
  }
\begin{inparaenum}[(\theenumi)]
\renewcommand{\theenumi}{\@greek{\arabic{enumi}}\!\!}
\item\label{vener-a} $A^2=-1$, $M\cdot A=0$, or \\
\item\label{vener-b} $A^2=0$, $M\cdot A=1$, or\\
\item\label{vener-c} $A^2=B^2=1$, $A\equiv B$, $M^2=4$ and $M\equiv 2A$.
\end{inparaenum}
\makeatother

In case \eqref{vener-a}, $A\cdot K_{S'} \leq A\cdot M =0$.
Since $A^2=-1$ and $A$ is $1$-connected, $A\cdot K_{S'} = -1$, and $A$ is a
$(-1)$-divisor. Since $A\cdot H = 1$, this contradicts our assumption
that $S$ is not an internal projection.

In case \eqref{vener-b}, $A\cdot K_{S'} \leq A\cdot M =1$.
Thus, $A\cdot K_{S'}$ equals either $-2$, and then $p_a(A)=0$,
or $0$, and then $p_a(A)=1$.
The first case cannot happen: indeed, in that case, $A$ would move in
a pencil of rational curves intersecting $M$ in one point, and the
curves in $|M|$ would be rational, in contradiction with our
assumptions.

Therefore, we must have $A\cdot K_{S'} =0$. Since $A\cdot M = 1$, and
$|M|$ is base-point-free, $h^0(A, \restr M A) \geq 2$.
Then, by \cite[Prop.\ A.5, (ii)]{cfm},
$A$ is not 2-connected.
So, by Lemma~\ref{reider-1con},
$A$ has a decomposition $A=A_1+A_2$ in which $A_1$ is a $(-1)$-divisor
such that $A_1\cdot H = 1$, in contradiction with our assumption that
$S$ is not an internal projection.

In case \eqref{vener-c}, $K_{S'}\cdot A = \frac 1 2 K_{S'}\cdot M <0$. Since
$K_{S'}\cdot A$ is odd, it equals either $-1$, and then $p_a(A)=1$,
or $-3$, and then $p_a(A)=0$.
The latter case cannot happen: if $A$ is rational, since $A^2=1$ and
$M \lineq 2A$, $M$ must be rational as well, which is excluded.
Thus, we must have $K_{S'}\cdot A = -1$.
Then, by the index theorem, $(K_{S'}+M)^2 \leq 1$, and in fact
equality holds because we are assuming $(K_{S'}+M)^2 >0$, so
$K_{S'}+M \lineq A$, hence
$2(K_{S'}+M) \lineq M$. Finally
$H \lineq -3K_{S'}$ and therefore $|H+3K_{S'}|$ is non-empty, in
contradiction with our assumptions.
\end{proof}

We may now proceed with the proof of the theorem.
Suppose first that the curves in $|M_2|$ are reducible, hence there is
a base-point-free pencil $|G|$ of rational curves, such that
$|M_2|=|hG|$ for some positive integer $h$.  
We have a birational morphism $\varphi: S'\longrightarrow \bF_e$, for
some non--negative integer $e$, such that $|G|$ is the pull back via
$\varphi$ of the ruling $|F|$.  Since $M\sim M_2-K_{S'}$, we have
$M\cdot G=2$.  Then the image $\mathcal M$ of $|M|$ via $\varphi$ is a
fixed-component-free linear subsystem of a system of the form
$|2E+kF|$, and $\mathcal M$ may have some base points that are simple
or double. However, up to performing elementary transformations, we
can get rid of the double base points, and thus we assume without loss
of generality that
$\mathcal M$ has only $s\geq 0$ simple base points.

Note that, since $\mathcal M$ is fixed-component-free, one has   $0\leq E\cdot (2E+kF)=k-2e$, i.e., $k\geq 2e$. 
Since $K_{\bF_e}\sim -2E-(e+2)F$, we have $\mathcal M+K_{\bF_e}=(k-e-2)F$, and therefore we must have $k-e-2=h$, hence $k=h+e+2$,  and so $h+e+2\geq 2e$, i.e., $h\geq e-2$. 

Therefore, $\mathcal M$ is a linear subsystem of
$|2E+(h+e+2)F|$, with $s\geq 0$ simple base points. Thus, the image $\mathcal H$ of $|H|$ via $\varphi$, is a linear subsystem of 
$$
|\mathcal M-K_{\bF_e}|=|4E+(h+2e+4)F|
$$
with $s$ double base points and maybe $t$ simple base points.   Again we must have $0\leq E\cdot (4E+(h+2e+4)F)=h-2e+4$, so $h\geq 2e-4$.  
We are then in case \eqref{class3:4gon}.
This ends the analysis in the case when $|M_2|$ is composed with a
pencil.

Assume now that $|M_2|$ is not composed with a pencil, thus the
general curve in $|M_2|$ is smooth and irreducible.  Consider the
morphism
$\varphi_{|M_2|}: S'
\longrightarrow S''\subseteq \bP^{r}$ determined by the complete
linear system $|M_2|$ of rational curves. Then, as in the proof of
Theorem \ref {thm:class1}, the surface $S''$ is a surface of minimal
degree $r-1$ in $\bP^{r}$. We  denote by $\phi: X\longrightarrow S''$
the minimal desingularization of $S''$, and set
$\sL=\phi^*(|\mathcal O_{S''}(1)|)$. Then we have a birational map
$\varphi: S'\dasharrow X$, and $|M'|$ is the pull-back via $\varphi$
of the linear system $\sL$. We have the following cases:\\  
\begin{inparaenum}
\item [(i)] $X=\bP^2$ and $\sL=|\sO_{\bP^2}(1)|$;\\
\item [(ii)] $X=\bP^2$ and $\sL=|\sO_{\bP^2}(2)|$;\\
\item [(iii)] $X=\bF_e$ and $\sL=|\sO_{\bF_e}(E+hF)|$, with $h\geq e$,
  but not $e=h=1$.\\
\end{inparaenum}
The map $\phi: X\longrightarrow S''$ is an isomorphism and
$\varphi=\varphi_{|M'|}$, unless we are in case (iii) and $h=e$ (i.e.,
if $S'$ is a cone).  

Assume we are in case (i). Then the linear system $|M|$ is mapped by
$\varphi$ to 	a linear subsystem of $|\sO_{\bP^2}(4)|$ with $s\geq
0$ simple base points and therefore $|H|$ is mapped to a linear
subsystem $\mathcal H$ of  $|\sO_{\bP^2}(7)|$ with $s\geq 0$ double base
points and $t$ simple base points, and we are in case \eqref{class3:V7}.  Case (ii)
is analogous and leads to case \eqref{class3:V8}.  

Finally, assume we are in case (iii).  One has
$\varphi_*(|M_2|)=|E+hF|$,
so $\dim (|M_2|)=\dim (|E+hF|)=2h-e+1$,
and the genus of the curves in $|M|$ is $p=2h-e+2$. 

Let $\mathcal M=\varphi_*(|M|)$. The image of the general curve of
$\mathcal M$ via $\phi$ is a canonical curve on $S''$, that is
therefore trigonal. Hence $\mathcal M$ is a linear subsystem of a
system of the form $|3E+kF|$,  that may have a certain number $s$ of
simple base points.  We must have $0\leq E\cdot (3E+kF)=k-3e$, i.e.,
$k\geq 3e$.  Note that, if $S''$ is a cone, the   
images of the general curves in $\mathcal M$  via $\phi$ do not
contain the vertex of the cone (because they are trigonal),
and therefore $E\cdot (3E+kF)=0$, i.e., $k=3e$. 
We  have
$$
4h-2e+2=2p-2=(3E+kF)\cdot (E+hF)=3h+k-3e,
$$
hence 
\begin{equation}\label{eq:lop}	
k=e+h+2. 
\end{equation}
Since $k\geq 3e$, one has $h\geq 2e-2$.  Then $\mathcal M$ is a
linear subsystem of $|3E+(e+h+2)F|$ with $s$ simple base points. So
$|H|$ is mapped by $\varphi$ to a linear subsystem of $|5E+(2e+h+4)F|$
with $s$ double base points,
and possibly $t$ simple base points, and we are in
case \eqref{class3:5gon}.  Notice that $E\cdot (5E+(2e+h+4)F)\geq 0$,
which reads $h\geq 3e-4$. Notice also that $S''$ can be a cone only if $e=2$, because in
that case $h=e$, $k=3e$, and therefore $e=2$ follows from \eqref
{eq:lop}.
 \end{proof}

\end{document}